\definecolor{cblue}{RGB}{0,70,140}
\definecolor{cgreen}{RGB}{100,140,0}
\definecolor{cred}{RGB}{190,10,50}
\setlist[itemize]{topsep=0ex,itemsep=0ex,parsep=0.4ex}
\setlist[enumerate]{topsep=0ex,itemsep=0ex,parsep=0.4ex}
\DeclareFontFamily{U} {cmr}{}
\DeclareFontShape{U}{cmr}{m}{n}{
	<-6> cmr5
	<6-7> cmr6
	<7-8> cmr7
	<8-9> cmr8
	<9-10> cmr9
	<10-12> cmr10
	<12-> cmr12}{}
\DeclareSymbolFont{Xcmr} {U} {cmr}{m}{n}
\DeclareMathSymbol{\Delta}{\mathord}{Xcmr}{'001}
\DeclareMathSymbol{\Upsilon}{\mathord}{Xcmr}{'007}
\DeclareMathSymbol{\Omega}{\mathord}{Xcmr}{'012}
\newcommand{\affiliation}{\footnote}
\renewcommand*{\eqref}[1]{%
  \hyperref[{#1}]{\textup{\tagform@{\ref*{#1}}}}%
}
\newcommand{\define}[1]{\textcolor{Maroon}{\emph{#1}}}
\newcommand{\eps}{\varepsilon}
\newcommand{\ca}{\mathcal}
\renewcommand{\phi}{\varphi}
\newcommand{\bbold}[1]{\textnormal{\textbf{#1}}}
\let\emptyset\varnothing
\newcommand{\codeg}{\operatorname{codeg}}
\theoremstyle{plain}
\newtheorem{thm}{Theorem}[section]
\newtheorem{claim}{Claim}[thm]
\newtheorem{conjecture}[thm]{Conjecture}
\newtheorem{problem}[thm]{Problem}
\newtheorem{lem}[thm]{Lemma}
\newtheorem{proposition}[thm]{Proposition}
\newtheorem{fact}[thm]{Fact}
\newtheorem{corollary}[thm]{Corollary}
\theoremstyle{definition}
\newtheorem{defn}[thm]{Definition}
\newtheorem{rmk}[thm]{Remark}
\title{Monochromatic cycle partitions of $r$-edge-coloured\\ graphs with high minimum degree}
\author{Francesco Di Braccio\affiliation{Department of Mathematics, London School of Economics and Political Science, United Kingdom (\textsf{\href{mailto:f.di-braccio@lse.ac.uk}{f.di-braccio@lse.ac.uk}).}} \and Viresh Patel\affiliation{School of Mathematical Sciences, Queen Mary University of London, United Kingdom (\textsf{\href{mailto:viresh.patel@qmul.ac.uk}{viresh.patel@qmul.ac.uk}}).}}
\date{}
\begin{document}

\maketitle

\begin{abstract}
    A question posed independently by Letzter and Pokrovskiy asks: how many vertex-disjoint monochromatic cycles are needed to cover the vertex set of an $r$-edge-coloured graph, as a function of its minimum (uncoloured) degree? We resolve this problem up to a $(\log r)$-factor. Specifically, we prove that, for any $r \geq 2$ and $\delta \in (0,1/2)$, any $n$-vertex $r$-edge-coloured graph $G$ with $\delta(G) \geq (1- \delta)n$ can be covered with \[\ca O\left(r \log r \cdot \left\lceil \frac{r}{\log(1/\delta)} \right\rceil\right)\] vertex-disjoint monochromatic cycles. We construct graphs that show this is tight up to the $(\log r)$-factor for all values of $r$ and $\delta$, and along the way disprove a conjecture of Bal and DeBiasio about monochromatic tree covering.
\end{abstract}

\section{Introduction}

Monochromatic partitioning is a classical research area at the intersection of Ramsey theory and the study of spanning structures in graphs. In line with a common trend in Ramsey theory, it concerns \define{$r$-edge-coloured graphs}—that is, graphs $G$ equipped with an edge-colouring $\chi: E(G) \to [r]$—and their \define{monochromatic} subgraphs, whose edges all receive the same colour. Since generic $r$-edge-coloured graphs may not contain a monochromatic subgraph covering all vertices, one instead seeks a cover by a small collection of such subgraphs. Many problems additionally require these subgraphs to be vertex-disjoint, in which case we say they \define{partition} the graph.

Cycles are one of the most natural and intensively studied subgraph classes in this setting.\footnote{Throughout the paper, we consider an empty graph, a single vertex, and a single edge to be (degenerate) cycles.} This line of inquiry has produced several compelling and influential questions. Lehel conjectured that every $2$-edge-colouring of $K_n$ admits a partition into two monochromatic cycles of different colours. This was first proven asymptotically by \L{}uczak, R\"odl, and Szemer\'edi \cite{luczak} and then exactly by Bessy and Thomassé \cite{BESSY2010176}. For general $r \ge 2$, Erd\H{o}s, Gyárfás, and Pyber \cite{EPB} conjectured that every $r$-edge-coloured $K_n$ can be partitioned into $r$ monochromatic cycles, and proved an upper bound of $\mathcal{O}(r^2 \log r)$ on the number of cycles required. Pokrovskiy \cite{POKROVSKIY201470} later constructed an example showing that at least $r+1$ cycles are sometimes necessary, though the conjecture may still hold with $\ca O(r)$ cycles. Despite substantial work, the best current upper bound is $\mathcal{O}(r \log r)$ due to Gyárfás, Sárközy, Ruzsink\'o, and Szemerédi \cite{GYARFAS2006855}.

In this paper, we consider analogous problems where the host graph is not assumed to be complete. Specifically, we address the following question: given $\alpha \in (0,1)$, how many monochromatic cycles are needed to partition an $n$-vertex $r$-edge-coloured graph $G$ whose minimum (uncoloured) degree is at least $\alpha n$? Interest in this problem started with a conjecture of Schelp \cite{Schelp2012SomeRT} stating that every $2$-edge-coloured graph $G$ with minimum degree at least $\frac{3}{4}|V(G)|$ admits a partition into two cycles, which was later solved over a series of papers \cite{baloghbarat, debiasio, shohammono}. The general version of the problem was posed independently by Letzter \cite{shohammono}, Pokrovskiy \cite{pokrovskiysparse}, and later reiterated by Allen, Böttcher, Lang, Skokan, and Stein \cite{ALLEN2024103838}.

To state the problem precisely, let $cp(G)$ denote the size of the smallest partition into monochromatic cycles of an $r$-edge-coloured graph $G$, and let $\delta(G)$ be its minimum degree (viewed as an uncoloured graph). For $r \ge 2$ and $\delta \in (0,1)$, define
\[
cp_r(\delta) := \limsup_{n\to\infty} \max_{G \in \mathcal{G}(n,r,\delta)} cp(G),
\]
where $\mathcal{G}(n,r,\delta)$ is the family of $n$-vertex $r$-edge-coloured graphs with $\delta(G) \ge (1-\delta)n$. Thus $cp_r(\delta)$ measures the number of monochromatic cycles needed to partition any sufficiently large graph of minimum degree at least $(1-\delta)n$, provided this quantity is bounded in terms of $r$ and $\delta$ alone. For example, $cp_r(\delta)$ is unbounded for $\delta > 1/2$, since any colouring of $K_{\delta n,(1-\delta)n}$ requires at least $(2\delta - 1)n$ monochromatic cycles. On the other hand, a result of Kor\'andi, Lang, Letzter, and Pokrovskiy \cite{korandilangletzterpokrovksiy} shows that $cp_r(\delta) = \mathcal{O}(r^2)$ whenever $\delta < 1/2$ (and that it is also unbounded for $\delta=1/2$). This leads to the following formulation of the central question.

\begin{problem}\label{prob:main_prob}
Determine $cp_r(\delta)$ for each $r \ge 2$ and $\delta \in (0,1/2)$.
\end{problem}

Here, we typically allow $\delta$ to depend on $r$. This problem is by now relatively well understood for $r=2$. Note that $cp_r(\delta)$ is increasing in $\delta$ since $\delta' > \delta$ implies $\ca G(n, r, \delta) \subseteq \ca G(n, r, \delta')$. Resolving Schelp's conjecture, Letzter \cite{shohammono} proved that $cp_2(1/4)=2$, which is optimal as $cp_2(\delta) > 2$ for $\delta > 1/4$. Allen, Böttcher, Lang, Skokan, and Stein \cite{ALLEN2024103838} showed that $cp_2(\delta)= 3$ for $\delta < 1/3$. Pokrovskiy \cite{pokrovskiysparse} conjectured that $cp_2(1/3)=3$ and that $cp_2(\delta)=4$ for all $\delta < 1/2$, which would settle the problem completely.

For general $r\ge 3$, the picture is much less complete. The abovementioned result of Kor\'andi, Lang, Letzter, and Pokrovskiy shows that $cp_r(\delta) = \mathcal{O}(r^2)$ for $\delta$ just below $1/2$, and this is tight up to a constant factor. At the opposite end, a straightforward adaptation of the methods yielding the $\mathcal{O}(r\log r)$ bound for complete graphs \cite{GYARFAS2006855} shows that $cp_r(\delta) = \mathcal{O}(r\log r)$ already when $\delta = r^{-\Omega(r)}$. The main gap lies in the intermediate region: how does $cp_r(\delta)$ drop from $\Theta(r^2)$ to $\mathcal{O}(r \log r)$ (or possibly $\mathcal{O}(r)$) as $\delta$ decreases from $1/2$ toward $0$? Our main result provides a sharp description of this transition.

\begin{thm}\label{thm:main_theorem}
There exist constants $k, K > 0$ such that for all $r \ge 2$ and $\delta \in (0,1/2)$,
\[ k r \left\lceil \frac{r}{\log(1/\delta)} \right\rceil \;\le\; cp_r(\delta) \;\le\;
K r \log r \left\lceil \frac{r}{\log(1/\delta)} \right\rceil. \]
\end{thm}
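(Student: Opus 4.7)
Theorem~\ref{thm:main_theorem} asserts matching upper and lower bounds up to a $\log r$ factor. Writing $p := \lceil r/\log(1/\delta)\rceil$, the upper bound is $\ca O(rp \log r)$ and the lower bound is $\Omega(rp)$. The multiplicative factor $r \log r$ coincides with the best-known cost of cycle-partitioning an $r$-edge-coloured near-complete graph (Gy\'arf\'as, Ruszink\'o, S\'ark\"ozy and Szemer\'edi~\cite{GYARFAS2006855}), which strongly suggests an upper-bound scheme with $\ca O(p)$ rounds each costing $\ca O(r \log r)$ cycles. The lower bound similarly suggests a construction with $p$ ``layers'', each carrying a Pokrovskiy-style $\Omega(r)$ per-layer barrier.

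\textbf{Upper bound.} My plan is to iteratively peel off vertex subsets that behave like near-complete graphs and apply the result of~\cite{GYARFAS2006855} on each. In each of the $\ca O(p)$ rounds I would first locate a subset $S \subseteq V(G)$ with $|S| \gtrsim n\log(1/\delta)/r$ such that $G[S]$ has relative minimum degree at least $1-\eps$ for a fixed small $\eps$; this step would rely on a dependent-random-choice or iterative-pruning argument exploiting the hypothesis $\delta(G)\geq(1-\delta)n$. Next, a quantitative strengthening of the Gy\'arf\'as et al.\ bound applied to $G[S]$ produces $\ca O(r\log r)$ monochromatic cycles covering $S$. Finally, pass to the residual graph $G[V\setminus S]$, verify that it still satisfies a suitable min-degree condition, and recurse. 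After $\ca O(p)$ rounds the uncovered set should be driven below a threshold at which a small absorber (set aside at the beginning) can mop up the remaining vertices by extending the cycles already built.

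\textbf{Lower bound.} Let $t := \lfloor\log(1/\delta)\rfloor$, so $2^t \approx 1/\delta$, and set $p=\lceil r/t\rceil$. The plan is to construct a graph $G$ on $n$ vertices, partitioned into $p$ layers $V_1,\ldots,V_p$ of size $n/p$, equipped with an $r$-edge-colouring that forces $\Omega(r)$ monochromatic cycles per layer. Within each layer I would use a Pokrovskiy-style $r$-colouring~\cite{POKROVSKIY201470} requiring $\Omega(r)$ cycles to partition. Between layers, the colouring must ensure $\delta(G)\geq(1-\delta)n$ while preventing any monochromatic cycle from efficiently amortising across several layers---for instance, by using an addressing scheme on the layer indices based on the $2^t$-element space, so that each cross-layer monochromatic connected component touches only a $\delta^{\Theta(1)}$-fraction of each layer it visits. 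Done correctly, any monochromatic cycle spanning multiple layers will still cover only a small portion of each, so one cannot meaningfully reduce the per-layer $\Omega(r)$ cost; summing over layers yields $\Omega(rp)$.

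\textbf{Main obstacles.} For the upper bound, the crucial step is the isolation of the near-complete subset $S$: standard dependent random choice yields a subset with many edges but not necessarily with controlled \emph{minimum} degree, so additional pruning and careful control of the residual min-degree across rounds will be needed to avoid a spurious $\log n$ factor in the iteration. For the lower bound, the delicate point is designing the cross-layer colouring so that no (possibly long) monochromatic cycle can replace multiple within-layer cycles---this may require an affine-plane or Hadamard-like addressing to keep the cross-layer colour classes from connecting too freely at the global scale. I expect the upper bound to be substantially harder, owing to the intricate interaction between the peeling procedure and the absorber.
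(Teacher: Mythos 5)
Your upper-bound strategy breaks at its very first step. Passing to an induced subgraph cannot, in general, improve the \emph{relative} minimum degree, and for $\delta$ bounded away from $0$ there may be no large near-complete induced subgraph at all. Concretely, let the complement of $G$ be a disjoint union of $\lfloor 1/\delta\rfloor$ cliques, each of size about $\delta n$ (so $G$ is a balanced complete multipartite graph, coloured arbitrarily); then $\delta(G)\geq(1-\delta)n$, but for any $S$ meeting a part $P$ in $s$ vertices, a vertex of $S\cap P$ has only $|S|-s$ neighbours in $S$. Requiring $\delta(G[S])\geq(1-\eps)|S|$ forces $|S\cap P|\leq \eps|S|+1$ for every part, hence $|S|\leq (1/\delta)(\eps|S|+1)$, and for any fixed $\eps<\delta/2$ this gives $|S|=\ca O(1/\delta)$ --- a bounded set, nowhere near $n\log(1/\delta)/r$. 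So the peeling cannot even begin whenever $\delta>2\eps$, and the theorem must cover all $\delta\in(0,1/2)$. Even in the regime where $\delta\ll\eps$, the ``small absorber set aside at the beginning'' is not a routine afterthought: making a bounded number of monochromatic cycles swallow an arbitrary leftover of size up to $\eps n$ in an $r$-coloured graph is the hardest part of the actual proof (it occupies all of \cref{sec:absorption} and requires the connecting-hub machinery together with the tree-covering bounds), and nothing in your sketch indicates how it would be done. The paper's upper bound instead goes through the regularity lemma: the reduced graph is spanned by triangles, barbells and a matching lying in roughly $r\log r + tc_{r+1}(\delta^{1/2})$ monochromatic components (\cref{lem:reducedgraphcover}), and these are converted into cycles; the $\lceil r/\log(1/\delta)\rceil$ factor enters through the tree-covering bound of \cref{thm:tree_cover_main}, not through a round-by-round peeling.

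For the lower bound, your layered construction leaves the crucial point --- that long monochromatic cycles cannot amortise across layers --- entirely unresolved, and the proposed fix (``affine-plane or Hadamard-like addressing'') is not worked out; note also that Pokrovskiy's construction only gives an $r+1$ barrier for complete graphs, so the per-layer $\Omega(r)$ must itself come from a component-counting argument. The paper sidesteps all of this by proving the stronger statement that already $\Omega(r\lceil r/\log(1/\delta)\rceil)$ monochromatic \emph{trees} are needed to cover the vertices (\cref{lem:lower_bound_tree}): it builds a $\delta$-intersecting $r$-partite hypergraph $H_{r,t,m}$ with transversal number $\Theta(r^2/\log(1/\delta))$ and converts it into a host graph of the required minimum degree via \cref{proposition:frommultitocolour}. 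Since any cycle partition is in particular a tree cover, no amortisation argument is needed. If you want to salvage your construction, the right move is to phrase the obstruction at the level of monochromatic components rather than cycles --- which is essentially what the hypergraph-transversal formulation does.
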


The theorem resolves \cref{prob:main_prob} up to a $\log$-factor and may be viewed as a minimum degree generalization of Gy\'arf\'as, Ruszink\'o, S\'ark\"ozy, and Szemer\'edi's $\ca O(r\log r)$ bound. Note that $\lfloor r/\log(1/\delta)\rfloor$ decreases from $\Theta(r)$, when $\delta$ is a constant, to $1$ when $\delta \leq e^{-r}$. Thus, as expected, the theorem states that $cp_r(\delta)$ is between $\Omega(r^2)$ and $\ca O(r^2 \log r)$ for constant $\delta$ (in which regime, the upper bound from \cite{korandilangletzterpokrovksiy} performs better) and between $\Omega(r)$ and $\ca O(r \log r)$ when $\delta$ is close to $0$. 

As a crucial ingredient in the proof of \cref{thm:main_theorem}, we will also obtain results about the analogue of \cref{prob:main_prob} where instead of a cycle partition we seek a \define{tree cover}: a collection of (not necessarily vertex-disjoint) monochromatic trees covering all vertices. Given an $r$-edge-colored graph $G$, let $tc(G)$ be the size of its smallest tree cover, and define
\[tc_r(\delta) := \limsup_{n\to\infty} \max_{G \in \mathcal{G}(n,r,\delta)} tc(G).\]
The problem of determining $tc_r(\delta)$ represents a weakening of \cref{prob:main_prob} in two different ways (from cycles to trees, and from partitions to covers), and so in general we have $tc_r(\delta) \leq cp_r(\delta)$. It has also received attention in its own right. Settling affirmatively a conjecture of Bal and DeBiasio \cite{baldebiasio}, Buci\'c, Korandi, and Sudakov \cite{bucickorandisudakov} proved that every $n$-vertex $r$-edge-coloured graph $G$ with $\delta(G) \geq (1 - 2^{-r})n$ can be covered with $r$ trees. This shows that $tc_r(2^{-r}) = r$, which highlights the jump in difficulty from tree covering to cycle partitioning.\footnote{For instance, covering $r$-edge-coloured $K_n$ with $r$ monochromatic trees is trivial: consider the $r$ monochromatic stars centred at any given vertex.} At the opposite extreme, Kor\'andi, Lang, Letzter, and Pokrovskiy \cite{korandilangletzterpokrovksiy} showed that $tc_r(\delta)= \Theta(r^2)$ when $\delta < 1/2$ is a constant independent of $r$. Regarding the intermediate range, Bal and DeBiasio suggested the following conjecture. 

\begin{conjecture}[\cite{baldebiasio}]
    For all $r \geq 2$, if $G$ is an $n$-vertex $r$-edge-coloured graph with $\delta(G) \geq \frac{r(n-r+1) + 1}{r+1}$, then $tc(G) \leq r$.
\end{conjecture}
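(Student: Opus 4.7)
Since the abstract announces that this conjecture will be \emph{disproved}, my plan is to construct a counterexample rather than attempt a proof. Concretely, for sufficiently large $r$ I want an $r$-edge-coloured graph $G$ on $n$ vertices with $\delta(G) \ge \tfrac{r(n-r+1)+1}{r+1}$ yet $tc(G) \ge r+1$. The abstract indicates that such a counterexample comes out of the same construction that witnesses the lower bound in \cref{thm:main_theorem} at $\delta = \Theta(1/r)$, which forces $cp_r(1/(r+1)) = \Omega(r^2/\log r)$; since $tc \le cp$ goes the wrong way, the hope (and, per the abstract, the fact) is that this construction simultaneously forces $tc_r(1/(r+1)) = \Omega(r^2/\log r) \gg r$.

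The natural reduction is vertex blow-up. Given a small $r$-edge-coloured graph $H$ on $h$ vertices with $\delta(H) \ge 1$, replacing each vertex by an independent set of size $t$ (and extending the colouring) yields a graph $H[t]$ on $n = ht$ vertices with $\delta(H[t]) = t\,\delta(H)$; its monochromatic components are either blow-ups of non-trivial components of the colour classes of $H$ or singletons inside some $V_i$ (when $i$ is isolated in that colour). Provided $t$ is large enough that using singletons is wasteful, $tc(H[t]) = tc(H)$. Hence it suffices to find a fixed $r$-edge-coloured $H$ with $\delta(H)/|V(H)| \ge r/(r+1)$ and $tc(H) \ge r+1$.

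Building $H$ is the heart of the disproof. A first sanity check: since $tc(H) \le \min_c \mathrm{comp}(G^H_c)$ (use one tree per component of any single colour), every colour class of $H$ must have more than $r$ components. As a graph on $h$ vertices with $>r$ components has at most $\binom{h-r}{2}$ edges, the inequality $r\binom{h-r}{2} \ge \binom{h}{2}$ forces $h \ge r + \Theta(\sqrt{r})$, though in practice one expects $h$ somewhat larger. I would then take $H$ to be a Turán-type complete $h$-partite graph with $h$ moderately above this threshold, and try to $r$-colour its edges either algebraically (for example, via parallel classes of an affine plane of order $q\approx \sqrt{r}$, so that every monochromatic component is a bounded-size affine subspace) or through a uniform random colouring, controlled by a union bound over potential $r$-tuples of monochromatic components.

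The main obstacle is that the two demands on $H$ pull against each other: the minimum-degree condition forces each colour class to have $\Omega(h)$ edges per vertex on average, which by standard random-graph intuition wants to create a giant monochromatic component, yet every colour class must split into more than $r$ components of moderate size. Balancing these---and going well beyond, to reach the stronger $tc(H) = \Omega(r^2/\log r)$ that \cref{thm:main_theorem} demands---is precisely the role of the paper's hierarchical construction, whose $\Theta(\log r)$ recursion depth is exactly what produces the $\log(1/\delta)$ factor in the denominator of the lower bound.
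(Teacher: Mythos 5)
You are right that the statement is to be \emph{disproved}, and your high-level plan—find a bounded-size $r$-edge-coloured gadget $H$ with $\delta(H)\ge \frac{r}{r+1}|V(H)|$ (roughly) and $tc(H)\ge r+1$, then blow up—is sound and matches the paper's endgame (the blow-up step appears at the end of \cref{lem:lower_bound_tree}). Your necessary condition that every colour class must have more than $r$ components, and the resulting bound $h \ge r+\Theta(\sqrt r)$, is a correct sanity check.

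However, there is a genuine gap: the central object, the gadget $H$ itself, is never constructed. You name two candidate routes (parallel classes of an affine plane on a Tur\'an-type host, or a uniform random colouring with a union bound) but carry out neither, and you yourself flag the ``main obstacle''---that the degree condition wants a giant monochromatic component while the covering condition forbids one---without resolving it. The paper resolves exactly this tension by a different and more structured route: it reduces tree covering to a hypergraph transversal problem via the connectivity hypergraph (\cref{fact:transversal}), constructs the explicit $\delta$-intersecting $r$-partite $r$-graph $H_{r,t,m}$ of Buci\'c--Kor\'andi--Sudakov (each edge consists of $r-t$ ``important'' vertices chosen from parts of size $m$ plus $t$ private vertices, giving $\tau = (t+1)m = \Theta(r^2/\log(1/\delta))$ by \cref{fact:coverHrtm,lem:deltaintersect}), and then converts this hypergraph back into an edge-coloured graph with the required minimum degree via \cref{proposition:frommultitocolour}. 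With $\delta = 1/(r+1)$ this gives $tc(G) = \Omega(r^2/\log r) \gg r$, refuting the conjecture. Your guess that the paper uses a ``hierarchical construction'' with ``$\Theta(\log r)$ recursion depth'' is also inaccurate: the construction is single-shot, and the $\log(1/\delta)$ factor arises from the probabilistic calculation showing that $m \approx r/\log(1/\delta)$ important vertices per part suffice for two random edges to intersect with probability at least $1-\delta$. As it stands the proposal is a research plan rather than a disproof.
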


Note that this conjecture would imply $tc_r(1/(r+1)) = r$. Our second result disproves this conjecture and provides close-to-matching lower and upper bounds for $tc_r(\delta)$. 

\begin{thm}\label{thm:tree_cover_main}
    There exist constants $k, K > 0$ such that for all $r \geq 2$ and $\delta \in (0,1)$,
    \[kr \left\lceil \frac{r}{\log(1/\delta)} \right\rceil \;\leq\; tc_r(\delta) \;\leq\; Kr \left\lceil \frac{r}{\log(1/\delta)} \cdot \log \left( 1 + \frac{r}{\log(1/\delta)}\right) \right\rceil.\]
    If in addition $\delta \geq 1/2$, then
    \[tc_r(\delta) \leq \frac{Kr^2}{\log(1/\delta)}.\]
\end{thm}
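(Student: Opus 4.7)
The theorem comprises three claims: a lower bound, a general upper bound, and an improved upper bound when $\delta\ge 1/2$. Throughout, set $m:=\lceil r/\log(1/\delta)\rceil$. For the \emph{lower bound}, the plan is to build a layered blow-up. Partition $V(G)$ into $m$ equal parts $V_1,\ldots,V_m$ and the palette $[r]$ into $m$ disjoint colour groups $C_1,\ldots,C_m$ of size $\approx\log(1/\delta)$. Inside each $V_i$, colour edges using colours from $C_i$ via an affine-plane-style design so that every colour class has $\Omega(r/m)$ small connected components, forcing $\Omega(r)$ monochromatic trees to cover $V_i$. Cross-edges between different parts are coloured so as not to merge any monochromatic component across layers. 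With $|V_i|\approx n/m$ one verifies $\delta(G)\ge (1-1/m)n\ge (1-\delta)n$ in the main regime (the boundary cases of very small $r$ or very small $\delta$ are handled separately), and a direct component count yields $tc(G)=\Omega(rm)$.

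For the \emph{general upper bound}, the anchor is the Buci\'c--Korandi--Sudakov theorem, guaranteeing $tc(H)\le r$ as soon as $\delta(H)\ge (1-2^{-r})N$. The plan is to iterate a key lemma stating that any $r$-edge-coloured graph $H$ on $N$ vertices with $\delta(H)\ge(1-\eta)N$ admits $\ca O(r\cdot\lceil r/\log(1/\eta)\rceil)$ monochromatic trees whose covered set $S$ satisfies $\delta(H-S)\ge(1-\eta^{2})|V(H)\setminus S|$. Iterating squares the defect: after $\log_2 m$ rounds the residual defect drops below $2^{-r}$, and BKS finishes the job with $r$ further trees. Summing gives $\ca O(rm\log m)+r=\ca O(rm\log(1+m))$, matching the stated bound.

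For the \emph{improved bound when $\delta\ge 1/2$}, the plan is to avoid iteration entirely: in this regime $m\ge r$, so the $\log m$ factor incurred by recursion is wasteful. Instead one would cover $G$ in a single pass by selecting, for each colour, a collection of connected components covering many vertices; the degree hypothesis, though mild here, still forces enough (near-bipartite or nearly-disconnected) structure to achieve a direct $\ca O(r^2/\log(1/\delta))$ bound, essentially by a Tur\'an-type component count across colours.

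The \emph{main obstacle} is the key lemma driving the general upper bound: producing $\ca O(rm)$ trees whose removal squares the defect. The covered set $S$ must contain enough non-neighbours of each leftover vertex to push the residual into a much denser regime, while itself decomposing into few monochromatic trees. I expect this step to demand an absorption-based argument, together with careful structural analysis of monochromatic connected components in graphs of moderate minimum degree (possibly via a random partitioning or regularity-type tool), and it is here that the bulk of the technical work will lie.
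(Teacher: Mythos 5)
Your proposal leaves genuine gaps in both main bounds, and neither follows the paper's route, which is to translate the whole problem into a hypergraph transversal question: the paper shows $tc(G)=\tau(\ca C(G))$ for the connectivity hypergraph, proves that high minimum degree makes $\ca C(G)$ ``$\delta$-intersecting'' and (crucially, for the lower bound) that any $\delta$-intersecting $r$-partite multi-$r$-graph can be converted back into a graph of minimum degree $(1-\delta)|V(G)|-1$ with the same tree-cover number, and then imports the Buci\'c--Kor\'andi--Sudakov hypergraph $H_{r,t,m}$ and their Helly-type transversal bound.

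The most concrete problem is your lower bound. With $m=\lceil r/\log(1/\delta)\rceil$ equal parts and all non-edges confined to a vertex's own part, you get $\delta(G)\ge(1-1/m)n$, and you need $1/m\le\delta$, i.e.\ $r\ge\log(1/\delta)/\delta$. This holds for constant $\delta$, but fails throughout the intermediate regime that is the point of the theorem: e.g.\ for $\delta=e^{-\sqrt r}$ one has $m\approx\sqrt r$ and $1/m=r^{-1/2}\gg e^{-\sqrt r}=\delta$. So either the graph must be essentially complete (in which case $r$ stars at a single vertex cover it and no lower bound of order $rm$ is possible), or the non-edges must be distributed far more cleverly than a partition into $m$ parts allows. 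This is exactly what the $H_{r,t,m}$ construction achieves: each hyperedge carries $\Theta(r)$ private vertices (forcing a large transversal) yet meets all but a $\delta$-fraction of the other hyperedges (giving the degree condition after the conversion of Proposition 3.5). Your ``affine-plane design plus non-merging cross-edges'' also does not address how a colour used between $V_i$ and $V_j$ avoids creating one monochromatic component covering all of $V_i\cup V_j$. Separately, your general upper bound rests entirely on the ``key lemma'' (cover $S$ with $\ca O(rm)$ trees so that $\delta(H-S)\ge(1-\eta^2)|V(H)\setminus S|$), which you explicitly leave unproven and identify as the main obstacle; as stated it is far from clear it is even true, since $S$ would have to contain most non-neighbours of every surviving vertex simultaneously. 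The paper avoids any such iteration: it observes that in a $\delta$-intersecting hypergraph any $k\approx 1/\delta$ edges admit a transversal of size $r$ (take the $r$ vertices of an edge meeting all of them) and then applies the BKS transversal bound (\cref{lem:UBtreecover}) in one shot. Only your sketch for $\delta\ge 1/2$ is close in spirit to the paper's short averaging argument, but it too is not carried out.
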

Note that, differently from $cp_r(\delta)$, the quantity $tc_r(\delta)$ is bounded for any $\delta \in (0,1)$. Another difference with respect to \cref{thm:main_theorem} is that the upper and lower bound are separated by a factor inside the ceiling of $\log(1 + r/\log(1/\delta))$, which is better than $\log r$ for most choices of $\delta$.

\subsection{Proof overview and organization}\label{sec:overview}

We now briefly outline the proofs of our main theorems. We begin by establishing the upper and lower bounds for the monochromatic tree covering problem, \cref{thm:tree_cover_main}, which forms the backbone of several subsequent arguments. We show that determining $tc_r(\delta)$ is equivalent to determining the transversal number of a certain class of $r$-partite multi-$r$-graphs, which we call \define{$\delta$-intersecting} (see \cref{defn:deltainter}). This equivalence allows us to derive the lower bound on $ tc_r(\delta)$ from constructions of $\delta$-intersecting hypergraphs with large transversal number, while the upper bounds follow from bounds on how large this transversal number can be. Several of these arguments rely on results of Buci\'c, Kor\'andi, and Sudakov \cite{bucickorandisudakov}, who studied a closely related problem.

Our lower bound on $tc_r(\delta)$ immediately implies the same lower bound for $cp_r(\delta)$. The remainder (and indeed the majority) of the paper is therefore devoted to proving the upper bound on $cp_r(\delta)$ stated in \cref{thm:main_theorem}. Our approach combines the connected matching method, a now standard technique for constructing long cycles in dense graphs via the Szemer\'edi regularity lemma that goes back to {\L}uczak \cite{LUCZAK1999174}, with the absorption method.

The high-level strategy, similar to that used in \cite{korandilangletzterpokrovksiy, GYARFAS2006855}, proceeds as follows. We start by applying the multi-colour version of the Szemer\'edi regularity lemma to $G$, obtaining a regular partition $\{V_0, V_1, \dots, V_t\}$ of $V(G)$ and the associated reduced graph $ R $, which is itself $r$-edge-coloured. We then carefully select a small collection of monochromatic components of $R$ and show that their union, viewed as an uncoloured graph, contains a spanning subgraph $H$ of $V(R)$ with special properties. In particular, $H$ is the vertex-disjoint union of graphs $F$ and $M$ where
\begin{itemize}
    \item $F$ covers a significant proportion of $V(R)$, has bounded maximum degree, and satisfies a `robust matchability' property (as defined in \cite{korandilangletzterpokrovksiy}): after blowing up its vertices to large sets, any of its induced subgraphs obtained by deleting a small number of vertices contains a perfect matching; and
    \item $ M $ is a perfect matching of $G - V(F)$.
\end{itemize}

Using the regularity of the partition, together with the bounded maximum degree of $H$, we can remove a small number of vertices from each cluster of $H = F \cup M$ to guarantee that any of its edges (of colour $c$, say) corresponds to a regular pair of clusters in $G$ with positive minimum degree in colour $c$.

We combine the discarded vertices from each cluster into a (small) leftover set $L$. We further remove, and add to $L$, a few vertices from each cluster in $M$ so that these clusters all have equal size. Let $W$ denote the union of all clusters covered by $F$. We then show (in the \emph{absorption step}) that all vertices in $L$ can be incorporated into a small number of monochromatic cycles contained in the bipartite edge-coloured graph $G[L, W]$. This is possible because $F$ covers a large portion of $V(R)$, ensuring that $W$ is large and that every vertex of $L$ retains most of its degree in $G[L, W]$.

After handling the vertices in $L$, the remainder of the graph exhibits strong regularity properties, enabling us to cover it using blow-up lemma-style arguments. Since all previously constructed cycles intersect each cluster of $F$ in a small number of vertices, the robust matchability of $F$ allows us to construct a small family of cycles covering its remaining vertices. The clusters in $M$ can be covered with a similar argument, this time exploiting the fact that they contain equal numbers of uncovered vertices. The number of cycles required to complete this step roughly matches the number of monochromatic components used to build $F$ and $M$, which is in turn closely tied to $tc_r(\delta)$ and depends crucially on our bounds from \cref{thm:tree_cover_main}.

Although this overall strategy is fairly standard, implementing each step presents substantial difficulties and requires several new ideas. The most demanding part is the absorption step, which is handled via a separate lemma asserting that any edge-coloured bipartite graph $G[A,B]$, where $A$ is small relative to $B$ and every $v \in A$ satisfies $\deg(v,B) \ge (1-\delta)|B| $, contains a small number of vertex-disjoint monochromatic cycles covering $A$. The proof relies on constructing large families of structures we call \define{connecting hubs} (see \cref{defn:connecting_hub}) and applying our tree covering results (i.e. \cref{thm:tree_cover_main}) to suitable auxiliary graphs based on these. We defer a detailed discussion to \cref{sec:absorption}---see the beginning of that section for a thorough proof sketch.

The other main challenge is the construction of $H = F \cup M$. For $F$, we take a vertex-disjoint union of triangles and graphs we call \define{barbells} (that is, \(7\)-vertex graphs formed from two triangles by adding a new vertex adjacent to one vertex in each triangle). Both triangles and barbells readily satisfy the robust matchability property required of $F$. To ensure that $F$ and $M$ are vertex-disjoint and together span $V(R)$, we proceed as follows:
\begin{itemize}
    \item find a large collection of vertex-disjoint triangles in \( R \);
    \item cover most of the remaining vertices with a matching \( M \); and
    \item use a small number of triangles from the first step to construct barbells covering all remaining vertices of \( R \) (finally, $F$ is formed from the union of all triangles and barbells thus found).
\end{itemize}

Throughout, we ensure that the edges added to $F\cup M$ lie in only a few monochromatic components. In particular, roughly $\ca O(tc_r(\delta))$ components are required for both the first and third step, whereas the second step uses a simple greedy argument requiring at most $\ca O(r\log r)$ components.

The paper is organized as follows. In \cref{sec:prelims} we introduce notation and tools, including the regularity lemma and various results about \define{sublinear expanders}, which are used in the absorption step. \cref{sec:tree_covering} contains the proof of \cref{thm:tree_cover_main}. The absorption lemma is proved in \cref{sec:absorption}. In \cref{sec:reduced_graph} we establish several lemmas for constructing the graphs \(F\) and \(M\) and verifying their required properties. Finally, \cref{sec:main_theorem} combines all previous ingredients to prove \cref{thm:main_theorem}, and \cref{sec:concluding_remarks} discusses some open problems.


\section{Preliminaries}\label{sec:prelims}

\subsection{Notation}\label{sec:notation}

An \define{$r$-edge-coloured graph} $G$ is a triple $(V(G), E(G), \chi(G))$ where $(V(G), E(G))$ is a simple graph and $\chi(G) : E(G) \to [r]$ is a colouring of its edges by elements of $[r]$. Given an $r$-edge-coloured graph $G$ and some colour $c \in [r]$, we write \define{$G_c$} to denote the graph obtained by considering only the edges of colour $c$ (which is monochromatic and thus may be viewed as an uncoloured graph). Given $C \subseteq [r]$, we write \define{$G_C$} to denote the $r$-edge-coloured graph $\bigcup_{c \in C} G_c$.

Given a graph $G$, a vertex $v \in V(G)$ and a set $S \subseteq V(G)$, \define{$N_G(v, S)$} is the set of neighbours of $v$ in $S$; if $G$ is an $r$-edge-coloured graph itself, this refers to the set of neighbours regardless of the colour of the edge connecting them to $G$.
So, for instance, if we wish to refer to the set of vertices in $S$ connected to $v$ by an edge of colour $c \in [r]$ or some colour belonging to $C \subseteq [r]$, we can just write $N_{G_c}(v, S)$ and $N_{G_C}(v, S)$ (respectively). Degrees and codegrees are now defined in the obvious way: \define{$\deg_G(v, S)$} is $|N_G(v, S)|$ and \define{$\codeg_G(v, u, S)$} is $|N_G(v, S) \cap N_G(u, S)|$. Whenever the host graph $G$ is clear from context, we will suppress the use of the letter $G$ and simply write, for instance, \define{$\deg(v, S)$}, \define{$\deg_c(v, S)$}, \define{$\deg_C(v, S)$} in place of $\deg_G(v, S), \deg_{G_c}(v, S), \deg_{G_C}(v, S)$. When $S = V(G)$, we simply write \define{$\deg(v)$}, \define{$\deg_c(v)$}, \define{$\deg_C(v)$}. Finally, the minimum degree \define{$\delta(G)$} of an $r$-edge-coloured graph $G$ is just $\min_{v \in V(G)}\deg(v)$. 

Given an $r$-edge-coloured graph $G$ and some $i \in [r]$, the \define{ $i$-coloured components } of $G$ are simply the components of $G_i$. Throughout the paper, we view the components themselves as subgraphs of $G$ rather than subsets of $V(G)$. This allows us to talk about, say, certain edges being contained in the union of some components. A \define{monochromatic component} is any component of $G_i$ for some $i \in [r]$. A \define{monochromatic connected matching} is a matching that is also a subgraph of a monochromatic component.

Given a graph $G$ and disjoint subsets $A, B \subseteq V(G)$, we write \define{$G[A,B]$} for the subgraph of $G$ whose edges are in $A \times B$. We will also often use $G[A,B]$ to directly refer to a bipartite (possibly edge-coloured) graph on vertex classes $A$ and $B$. We write \define{$G[A]$} for the induced subgraph of $G$ on $A$, and \define{$G - A$} for $G[V(G) \setminus A]$. Given two graphs $G$ and $H$, \define{$G \cap H$} is the subgraph on edge set $E(G) \cap E(H)$ and \define{$G \setminus H$} the one on edge set $E(G) \setminus E(H)$ (this definition extends in the obvious way to the case when the graphs are edge-coloured and agree on the colouring of their shared edges). We write \define{$e(G)$} for the number of edges in $G$, and \define{$d(G)$} is its average degree $2e(G)/|V(G)|$. 

We will often make assumptions of the form $a \ll b_1, \dots, b_k$. This means that there exists a positive function $f$ for which the relevant result holds provided that $a \leq f(b_1, \dots, b_k)$. Moreover, when writing this expression, we always assume that $a, b_1, \dots, b_k$ are positive. If we write $1/n$ instead of $a$ or $b_i$, it is implicitly assumed that $n$ is an integer. We omit ceilings and floors when they are not crucial to the argument. All logarithms throughout the paper are base $e$.

\subsection{Probability}

We will need the following well-known concentration inequality for the hypergeometric distribution. Recall that a hypergeometric random variable $X$ with parameters $N,n$ and $m$ takes value $k$ with probability $\binom{m}{k}\binom{N-m}{n-k}/\binom{N}{n}$. Also, it satisfies $\mathbb{E}[X] =\frac{nm}{N}$.

\begin{lem}[Chernoff's inequality for the hypergeometric distribution]\label{lem:chernoff}
Let $X$ be a hy\-per\-ge\-o\-met\-ric random variable with parameters $N,n$ and $m$. Then, for any $t>0$,
\[\mathbb{P}\left(|X-\mathbb{E}[ X]|\ge t\right)\le 2e^{-t^2/(3\mathbb{E}[X])}.\]
\end{lem}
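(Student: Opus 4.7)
The plan is to reduce the hypergeometric tail bound to the binomial case via Hoeffding's classical convexity inequality, and then to invoke the standard Chernoff bound. Recall that Hoeffding proved that if $X$ is hypergeometric with parameters $N, n, m$ and $Y \sim \mathrm{Bin}(n, m/N)$, then $\mathbb{E}[f(X)] \leq \mathbb{E}[f(Y)]$ for every convex $f : \mathbb{R} \to \mathbb{R}$. Applying this with $f(x) = e^{\lambda x}$ (for both signs of $\lambda$) shows that the moment generating function of the hypergeometric variable is dominated pointwise by that of its binomial surrogate. An alternative route is to verify directly that the indicators underlying $X$ are negatively associated, which is enough to push through the usual MGF-based Chernoff arguments exactly as in the independent case.

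On the binomial side, writing $Y$ as a sum of i.i.d.\ Bernoulli$(m/N)$ variables and using $1 + x \leq e^x$ gives the textbook estimate $\mathbb{E}[e^{\lambda Y}] \leq \exp\bigl(\mu(e^\lambda - 1)\bigr)$, where $\mu = \mathbb{E}[X] = nm/N$. Apply Markov's inequality to $e^{\lambda(X - \mu)}$ with $\lambda > 0$ and optimise in $\lambda$: the usual calculation yields $\mathbb{P}(X \geq \mu + t) \leq \exp(-t^2/(2\mu + 2t/3)) \leq e^{-t^2/(3\mu)}$ in the main regime $t \leq \mu$. The lower tail is handled symmetrically (taking $\lambda < 0$) and in fact obeys the stronger bound $e^{-t^2/(2\mu)}$. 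A union bound over the two tails produces the factor of $2$ in the statement. In the combinatorial applications of interest in this paper, $t$ is always of order at most $\mathbb{E}[X]$, so the stated inequality is exactly what one obtains in this way.

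The only nontrivial ingredient is Hoeffding's convexity inequality itself; everything else is a routine Chernoff computation, and this is really the only step that might be described as an ``obstacle''. Since the lemma is entirely classical, in practice one would simply cite Hoeffding's 1963 paper or a standard textbook on concentration of measure (e.g.\ Boucheron--Lugosi--Massart, or Janson--{\L}uczak--Ruci\'nski in a combinatorial context) rather than reproducing the argument, which is presumably what the authors intend to do here.
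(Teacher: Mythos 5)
The paper gives no proof of this lemma at all: it is stated as a classical fact (and in combinatorics one would indeed just cite Hoeffding's 1963 paper or a standard reference, as you say). Your outline --- domination of the hypergeometric MGF by the binomial one via Hoeffding's convexity inequality (or negative association), followed by the standard Bernstein--Chernoff computation --- is the standard correct derivation. You are also right to flag the regime restriction: the chain $\exp\left(-t^2/(2\mu+2t/3)\right)\le e^{-t^2/(3\mu)}$ requires $t\le 3\mu/2$, and indeed the upper-tail bound $e^{-t^2/(3\mathbb{E}[X])}$ as literally stated for \emph{all} $t>0$ can fail when $t\gg\mathbb{E}[X]$; this is a known looseness in how such lemmas are quoted, and in every application in the paper $t$ is at most of order $\mathbb{E}[X]$, so nothing is affected.
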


\subsection{Regularity}

In the following, we state the variant of the Szemer\'edi regularity lemma \cite{regularpartitions} we will be using, together with several tools associated with it. All the results from this section are widely known and can be found in the survey \cite{komlossimonovits}. 

Given a graph $G$ and disjoint sets of vertices $A, B \subseteq V(G)$, the \define{density} of the pair $(A, B)$ is defined as $d_G(A,B) := e(A, B)/ (|A||B|)$. Given $d, \eps> 0$, the pair $(A, B)$ is said to be \define{$(d, \eps)$-regular} if the following holds: for any subsets $A' \subseteq A, B' \subseteq B$ with $|A'| \geq \eps |A|$ and $|B'| \geq \eps |B|$, 
\[|d(A', B') - d| \leq \eps . \]
We will simply say that $(A, B)$ is $\eps$-regular if the specific value of $d$ is either unnecessary to the argument or clear from context. 

We will be using the following degree form of the multicolour regularity lemma. This follows from the multicolour regularity lemma (Theorem 1.18 in \cite{komlossimonovits}) by the same argument that proves the degree form of the (uncoloured) regularity lemma (see Theorem 1.10 in \cite{komlossimonovits}).

\begin{lem}[Degree form of the multicolour regularity lemma]\label{lem:regularitylemma}
Let $1/n \ll 1/M \ll \eps, 1/r$. Let $G$ be an $n$-vertex $r$-edge-coloured graph, and let $d > 0$. Then there is a partition $\{V_0, V_1, \dots, V_t\}$ of $V(G)$ and a subgraph $G'$ of $G$ with vertex set $V(G) \setminus V_0$ such that
\begin{enumerate}[label = \textnormal{(R\arabic*)}]
    \item\label{prop:reg1} $1/\eps \leq t \leq M$,
    \item\label{prop:reg2} $|V_0| \leq \eps n$ and $|V_1| = \dots = |V_t| \leq \eps n$, 
    \item\label{prop:reg3} $\deg_{G'}(v) \geq \deg_G(v) - (rd + \eps)n$ for each $v \in V(G) \setminus V_0$, 
    \item\label{prop:reg4} $G'[V_i]$ contains no edges for $i \in [t]$, and
    \item\label{prop:reg5} for each colour $c \in [r]$, all pairs $(V_i, V_j)$ are $\eps$-regular in $G'_c$ and have density either $0$ or at least $d$. 
\end{enumerate}
\end{lem}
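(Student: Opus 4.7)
The plan is to apply the ``plain'' multicolour regularity lemma (Theorem~1.18 of \cite{komlossimonovits}) with a sufficiently small auxiliary parameter $\eps' \ll \eps/r$, and then clean up the resulting partition by pruning edges and enlarging the exceptional class. The plain lemma produces an equitable partition $\{V_0', V_1, \dots, V_t\}$ with $|V_0'| \le \eps' n$, $1/\eps' \le t \le M'(\eps', r)$, such that for each colour $c \in [r]$ all but at most $\eps' t^2$ pairs $(V_i, V_j)$ are $\eps'$-regular in $G_c$. I would then define $G'$ by removing from $G$: (a) all edges incident to $V_0'$; (b) all edges inside any single cluster; (c) for each colour $c$, all colour-$c$ edges of every pair $(V_i, V_j)$ that is $\eps'$-irregular in $G_c$; and (d) for each colour $c$, all colour-$c$ edges of every pair $(V_i, V_j)$ that is $\eps'$-regular in $G_c$ but has colour-$c$ density less than $d$. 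Properties \ref{prop:reg1}, \ref{prop:reg2}, \ref{prop:reg4}, and \ref{prop:reg5} then follow immediately, provided $\eps' \le \eps$ (so that any surviving pair is $(d',\eps)$-regular in the relevant colour for some $d' \geq d$).

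The heart of the argument is verifying the degree condition \ref{prop:reg3}, for which I would absorb two kinds of exceptional objects into the exceptional class. First, for each colour $c$, call a cluster $V_i$ \emph{$c$-deficient} if more than $\sqrt{\eps'}\,t$ other clusters $V_j$ form an $\eps'$-irregular pair with $V_i$ in $G_c$; by double-counting at most $\sqrt{\eps'}\,t$ clusters are $c$-deficient per colour, and hence at most $r\sqrt{\eps'}\,t$ in total. Second, call a vertex $v \in V_i$ \emph{badly behaved} if for some colour $c$ the inequality $\bigl|\deg_c(v, V_j) - d_{G_c}(V_i, V_j)\cdot|V_j|\bigr| > \eps'|V_j|$ holds for more than $\sqrt{\eps'}\,t$ clusters $V_j$. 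The standard fact that at most $2\eps'|V_i|$ vertices of $V_i$ exhibit such large colour-$c$ deviation into a fixed $\eps'$-regular $V_j$, combined with Markov's inequality and a union bound over the $r$ colours, bounds the total number of badly behaved vertices by $\ca O(r\sqrt{\eps'})\cdot n$. Moving every vertex of every $c$-deficient cluster together with every badly behaved vertex into $V_0'$, and then redistributing a few more vertices to restore equal sizes of the remaining clusters, yields the required $V_0$ of size at most $\eps n$.

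Finally, for any $v \in V_i \setminus V_0$, I would count the edges at $v$ removed in passing from $G$ to $G'$. Edges to $V_0$ and edges inside $V_i$ contribute at most $2\eps n$; for each colour $c$, edges in $\eps'$-irregular pairs involving $V_i$ contribute at most $\sqrt{\eps'}\,t \cdot |V_1| \le \sqrt{\eps'}\,n$ (since $V_i$ is not $c$-deficient); and edges in low-density regular pairs in colour $c$ contribute at most $(d+\eps')n + \sqrt{\eps'}\,n$, because in each of the at least $(1-\sqrt{\eps'})\,t$ clusters $V_j$ where $v$ is well-behaved one has $\deg_c(v, V_j) \le \bigl(d_{G_c}(V_i, V_j) + \eps'\bigr)|V_j| \le (d+\eps')|V_j|$, while the remaining clusters contribute at most $\sqrt{\eps'}\,n$ in aggregate. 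Summing over the $r$ colours gives a loss of at most $(rd+\eps)n$ per vertex, as required, provided $\eps'$ is small enough (e.g. of order $(\eps/r)^2$). The only real obstacle is parameter bookkeeping---making sure that the various error terms $r\eps'$, $r\sqrt{\eps'}$, and $|V_0|/n$ simultaneously fit under $\eps$---which is entirely routine.
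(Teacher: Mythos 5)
Your proposal is correct and is precisely the argument the paper invokes: the paper gives no written proof of \cref{lem:regularitylemma}, instead citing that it follows from the plain multicolour regularity lemma by the same cleaning procedure used for the uncoloured degree form in \cite{komlossimonovits}, which is exactly what you carry out (per-colour deletion of irregular and low-density pairs, absorption of deficient clusters and atypically behaved vertices into the exceptional class, and routine parameter bookkeeping with $\eps' = \ca O((\eps/r)^2)$).
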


If $G$ is a graph with the partition $\{V_0, V_1, \dots, V_t\}$ and the subgraph $G'$ given by \cref{lem:regularitylemma}, the \define{$(\eps, \delta)$-reduced graph} of $G$, denoted $R(G)$, is the $r$-edge-coloured multigraph on vertex set $\{V_1, \dots, V_t\}$ in which $V_i$ and $V_j$ are connected by a $c$-coloured edge if $(V_i, V_j)$ is $\eps$-regular and of density at least $d$ in $G'_c$. The following fact is an immediate consequence of \ref{prop:reg2}--\ref{prop:reg5}. 

\begin{fact}\label{fact:reduceddeg}
Let $1/n \ll 1/M \ll \eps, 1/r$ and let $d > 0$. Let $G$ be an $n$-vertex $r$-edge-coloured graph and let $G'$ be the subgraph and $\{V_0, V_1, \dots, V_t\}$ be the partition given by \cref{lem:regularitylemma}. Let $R$ be an $r$-edge-coloured graph obtained from $R(G)$ by retaining exactly one edge between each pair of adjacent vertices. Then, 
\[\delta(R) \geq \left(\frac{\delta(G)}{n} - rd - \eps \right)t.\]
\end{fact}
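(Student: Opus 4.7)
The strategy is the standard one: fix a cluster $V_i\in V(R)$, pick any vertex $v\in V_i$, and use the properties of the regular partition to translate the lower bound on $\deg_{G'}(v)$ into a lower bound on $\deg_R(V_i)$.

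First, by property \ref{prop:reg3} we have
\[
\deg_{G'}(v) \;\ge\; \deg_G(v) - (rd+\eps)n \;\ge\; \delta(G) - (rd+\eps)n.
\]
Next I would argue that every neighbour of $v$ in $G'$ lies in some $V_j$ with $V_iV_j\in E(R)$. Indeed, by \ref{prop:reg4} the vertex $v$ has no $G'$-neighbours in $V_i\setminus\{v\}$, and if $V_j$ is such that $V_iV_j\notin E(R(G))$, then for every colour $c\in[r]$ the pair $(V_i,V_j)$ is not an edge of colour $c$ in $R(G)$; by \ref{prop:reg5}, since this pair is $\eps$-regular, its density in $G'_c$ must be $0$. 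Summing over $c$, there are no edges of $G'$ between $V_i$ and $V_j$. Since $R$ contains exactly one edge per adjacent pair of $R(G)$, the set of $j$ with $V_iV_j\in E(R)$ coincides with the set of $j$ with $V_iV_j\in E(R(G))$.

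Finally, using $|V_1|=\dots=|V_t|\le n/t$ from \ref{prop:reg2}, we get
\[
\delta(G)-(rd+\eps)n \;\le\; \deg_{G'}(v) \;\le\; \sum_{j:\,V_iV_j\in E(R)} |V_j| \;\le\; \deg_R(V_i)\cdot\frac{n}{t}.
\]
Rearranging yields $\deg_R(V_i)\ge (\delta(G)/n-rd-\eps)t$, and since $V_i$ was arbitrary, the claim follows.

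There is no real obstacle here: the argument is a direct bookkeeping exercise combining \ref{prop:reg2}--\ref{prop:reg5}. The only point worth being careful about is the reduction from $R(G)$ (a multigraph) to the simple graph $R$, but this is harmless because non-adjacency in $R(G)$ forces density $0$ in \emph{every} colour class of $G'$.
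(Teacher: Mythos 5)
Your proof is correct and is exactly the standard bookkeeping argument the paper has in mind (the paper states this fact without proof, calling it an immediate consequence of \ref{prop:reg2}--\ref{prop:reg5}). All the steps check out, including the key observation that non-adjacency in $R(G)$ forces density $0$ in every colour class of $G'$ by the dichotomy in \ref{prop:reg5}, and the bound $|V_j|\le n/t$ coming from the partition structure.
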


Next we state some basic facts about regular pairs.

\begin{fact}[Facts 1.3 and 1.5 in \cite{komlossimonovits}]\label{lem:basicfacts} Let $d, \eps > 0$ and $\eps < 1/4$. Let $(A,B)$ be a $(d, \eps)$-regular pair.
    \begin{enumerate}[label = (\arabic*)]
        \item If $A' \subseteq A, B' \subseteq B$ satisfy $|A'| \geq \sqrt{\eps} |A|$ and $|B'| \geq \sqrt{\eps}|B|$, then $(A', B')$ is a $(d, \sqrt{\eps})$-regular pair.
        \item All but at most $\eps |A|$ vertices $v \in A$ satisfy $\deg(v, B) \geq (d- \eps)|B|$. 
    \end{enumerate}
\end{fact}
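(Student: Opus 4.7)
The plan is to verify both parts directly from the definition of $(d,\eps)$-regularity.

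For part (1), the key observation is purely arithmetic: if $A'' \subseteq A'$ and $B'' \subseteq B'$ satisfy $|A''| \geq \sqrt{\eps}\,|A'|$ and $|B''| \geq \sqrt{\eps}\,|B'|$, then chaining the hypotheses on $|A'|,|B'|$ gives
\[|A''| \;\geq\; \sqrt{\eps}\cdot\sqrt{\eps}\,|A| \;=\; \eps|A|, \qquad |B''| \;\geq\; \eps|B|.\]
Hence the original $(d,\eps)$-regularity of $(A,B)$ applies to $(A'',B'')$, yielding $|d(A'',B'')-d| \leq \eps \leq \sqrt{\eps}$. Since $A'',B''$ were arbitrary subsets of $A',B'$ meeting the required size thresholds, this is exactly what is needed to conclude that $(A',B')$ is $(d,\sqrt{\eps})$-regular.

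For part (2), I would argue by contradiction. Let
\[S \;:=\; \{v \in A : \deg(v,B) < (d-\eps)|B|\}\]
and suppose $|S| > \eps|A|$. On the one hand, summing the degree bound over $v \in S$ gives $e(S,B) < |S|(d-\eps)|B|$, so $d(S,B) < d-\eps$. On the other hand, $|S| \geq \eps|A|$ (and trivially $|B| \geq \eps|B|$), so the $(d,\eps)$-regularity of $(A,B)$ forces $|d(S,B)-d| \leq \eps$, hence $d(S,B) \geq d-\eps$. This contradicts the first bound.

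Neither part presents a genuine obstacle: both are routine unpackings of the definition. The only bookkeeping to be careful with is in part (1), namely that the $\sqrt{\eps}$-thresholds inside $(A',B')$ translate to $\eps$-thresholds inside $(A,B)$, which is exactly why the weakened regularity parameter $\sqrt{\eps}$ is the correct one to aim for (as opposed to $\eps$ itself). Part (2) is a standard "large defect subset" averaging argument, and its only subtlety is recognizing that the set $S$ of low-degree vertices is itself a legitimate subset to feed into the regularity condition once it is assumed to exceed $\eps|A|$.
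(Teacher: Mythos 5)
Your proof is correct and is the standard direct verification of both facts; the paper does not prove this statement itself but cites it from Koml\'os--Simonovits, and your argument is exactly the one found there (the threshold-chaining for part (1) and the low-degree-subset averaging for part (2)).
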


The next lemma shows how to connect vertices in the host graph using the connectivity of the reduced graph. Its proof is the same as Lemma 3.4 in \cite{korandilangletzterpokrovksiy}. 

\begin{lem}\label{lem:reg_connecting_lemma}
Let $1/m \ll \eps \ll d$ and $t \geq 2$. Let $G$ be a $t$-partite $n$-vertex graph on vertex classes $V_1, \dots, V_t$ with $|V_i| = m$ for each $i \in [t]$. Suppose that each pair $(V_i, V_{i+1})$ is $\eps$-regular with density at least $d$. Then, for any $S \subseteq V(G)$ with $|S \cap V_i| \leq dm/4$ for each $i \in [t]$ and $x, y \in V(G) $ with $\deg(x, V_1), \deg(y, V_t) \geq dm/2$, there is a path $x v_1 \dots v_t y$ in $G$ such that $v_i \in V_i \setminus S$ for each $i \in [t]$.
\end{lem}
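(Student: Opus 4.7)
The plan is to build the path greedily from $x$ to $y$, passing through $V_1, V_2, \dots, V_t$ in order, after first identifying (by a backwards induction) for each $i$ a large set $A_i \subseteq V_i \setminus S$ of vertices from which $v_i$ may be safely chosen. Specifically, I would set $A_t := N(y,V_t) \setminus S$, noting that $\deg(y,V_t) \geq dm/2$ and $|S \cap V_t| \leq dm/4$ give $|A_t| \geq dm/4$. Then, for $i = t-1, t-2, \dots, 1$, I would define
\[
A_i := \bigl\{\, v \in V_i \setminus S : \deg(v, A_{i+1}) \geq (d-\eps)|A_{i+1}|\,\bigr\}.
\]

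The key claim in the backwards construction is that $|A_i| \geq m/2$ for every $i \in [t-1]$. The point is that as long as $|A_{i+1}| \geq \eps m$, the $\eps$-regularity of the pair $(V_i, V_{i+1})$ (which has density at least $d$) forces all but at most $\eps m$ vertices of $V_i$ to have degree at least $(d-\eps)|A_{i+1}|$ into $A_{i+1}$; combined with $|S \cap V_i| \leq dm/4$ and $\eps \ll d$, this yields $|A_i| \geq m - dm/4 - \eps m \geq m/2$. Since $|A_t| \geq dm/4 \geq \eps m$, this gives an inductive chain in which the hypothesis $|A_{i+1}| \geq \eps m$ is preserved all the way down to $i=1$.

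Having secured the sets $A_1, \dots, A_t$, the forward construction of the path is immediate. The bounds $\deg(x,V_1) \geq dm/2$ and $|V_1 \setminus A_1| \leq dm/4 + \eps m$ yield $|N(x,V_1) \cap A_1| > 0$, so pick any $v_1 \in N(x,V_1) \cap A_1$. Inductively, given $v_i \in A_i$ with $i < t$, the defining property of $A_i$ gives $\deg(v_i, A_{i+1}) \geq (d-\eps)|A_{i+1}| > 0$, so a suitable $v_{i+1} \in N(v_i,V_{i+1}) \cap A_{i+1}$ exists. Finally $v_t \in A_t \subseteq N(y, V_t)$, so $v_t y$ is an edge and $v_i \notin S$ for every $i$, as required.

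I do not expect any real obstacle here: the only thing to verify carefully is that $|A_{i+1}|$ never shrinks below the threshold $\eps m$ required to invoke the regularity of $(V_i, V_{i+1})$, but this is precisely what the uniform lower bound $|A_i| \geq m/2$ (combined with $\eps \ll d$) guarantees. The slack $\eps \ll d$ in the hypothesis is exactly what makes all the inequalities go through comfortably.
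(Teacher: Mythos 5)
Your argument is correct and is essentially the standard proof that the paper defers to (the paper gives no proof of its own, citing instead Lemma 3.4 of \cite{korandilangletzterpokrovksiy}): a backward pass shrinking each $V_i$ to the ``typical'' vertices with large degree into $A_{i+1}$, followed by a greedy forward selection. The only cosmetic points are that the threshold $(d-\eps)|A_{i+1}|$ may need to be weakened to $(d-2\eps)|A_{i+1}|$ depending on how one parses ``$\eps$-regular with density at least $d$'' against the paper's definition of $(d,\eps)$-regularity, and that one should also exclude $x$ and $y$ themselves (which may lie in some $V_i$) when choosing the $v_i$ --- both harmless given the available slack.
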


The next lemma is a corollary of the celebrated blow-up lemma \cite{blowuplemma}; alternatively, see the proof of Lemma 3.1 in \cite{korandilangletzterpokrovksiy}.

\begin{lem}\label{lem:spanningpath}
    Let $1/m \ll \eps \ll d$. Let $(A, B)$ be a $(d, \eps)$-regular pair in a graph $G$ with $|A| = |B| = m$. Suppose that $\delta(G[A,B]) \geq 5\eps m$. Then, for any $x \in A$ and $y \in B$, the graph $G[A, B]$ contains a $x$--$y$ path spanning $A \cup B$.
\end{lem}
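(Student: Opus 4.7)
I would deduce this from the Komlós--Sárközy--Szemerédi blow-up lemma \cite{blowuplemma}, following the strategy used in the proof of Lemma 3.1 of \cite{korandilangletzterpokrovksiy}. The blow-up lemma requires $(A, B)$ to be super-regular, meaning every vertex has degree at least $(d-\eps)m$ into the opposite part, whereas here we are only given the weaker bound $\delta(G[A, B]) \geq 5 \eps m$. A short absorption argument bridges this gap.

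First, I would identify the exceptional sets $X := \{v \in A : \deg(v, B) < (d - \eps) m\}$ and $Y := \{v \in B : \deg(v, A) < (d - \eps) m\}$, noting that \cref{lem:basicfacts}(2) gives $|X|, |Y| \leq \eps m$. Using the minimum degree hypothesis, I would greedily build a collection $\ca P$ of $O(\eps m)$ pairwise vertex-disjoint short paths (each of length at most $3$) covering $\{x, y\} \cup X \cup Y$ whose ``anchor'' endpoints---those endpoints distinct from $x$ and $y$---lie in the non-exceptional core $(A \setminus X) \cup (B \setminus Y)$. At each greedy step the vertex being extended has at least $5 \eps m$ neighbors in the opposite part, while only $O(\eps m)$ vertices have been used so far, so the required extensions always exist within the core.

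Let $A^* \subseteq A \setminus X$ and $B^* \subseteq B \setminus Y$ denote the vertices not used by any path in $\ca P$, possibly after deleting $O(\eps m)$ further vertices from the larger side so that $|A^*| = |B^*|$. By \cref{lem:basicfacts}(1) and the definition of $X, Y$, the pair $(A^*, B^*)$ is $(d, 2\eps)$-super-regular, with minimum degree at least $(d-\eps)m - O(\eps m) \geq dm/2$ into the other part. I would then apply the image-restricted blow-up lemma to $G[A^*, B^*]$ to find a Hamilton path visiting the anchor endpoints of the paths in $\ca P$ in an order that, after concatenation with those paths, yields the desired $x$--$y$ path spanning $A \cup B$. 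The main obstacle will be the bookkeeping needed to balance $|A^*| = |B^*|$ after absorption and to select an anchor ordering that is combinatorially feasible for the concatenation---both are routine issues that are treated carefully in the proof of Lemma 3.1 of \cite{korandilangletzterpokrovksiy}.
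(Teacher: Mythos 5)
Your approach is exactly the one the paper has in mind: the paper gives no self-contained proof of this lemma, stating only that it is a corollary of the blow-up lemma \cite{blowuplemma} and pointing to the proof of Lemma 3.1 in \cite{korandilangletzterpokrovksiy}, which proceeds precisely as you describe — absorb the exceptional vertices into short paths anchored in the core, then apply the image-restricted blow-up lemma to the remaining super-regular pair. One step of your write-up cannot be executed literally, though: you may not ``delete $O(\eps m)$ further vertices from the larger side'' to force $|A^*| = |B^*|$, because the final path must span all of $A \cup B$ and deleted vertices would be left uncovered. The balancing must instead be achieved through the parity bookkeeping itself: a linking path between two anchors on the same side has internal vertices with imbalance $\pm 1$, while anchors on opposite sides give imbalance $0$, so one chooses the lengths of the paths in $\ca P$ (and hence the sides of their anchor endpoints) so that the prescribed imbalances of the linking paths sum to $|A^*| - |B^*|$. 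Two smaller remarks: \cref{lem:basicfacts}(1) gives $(d,\sqrt{\eps})$-regularity of $(A^*,B^*)$, not $(d,2\eps)$-regularity (harmless); and your greedy step needs the full strength of $\delta(G[A,B]) \geq 5\eps m$ — covering $X \cup Y \cup \{x,y\}$ by paths of length at most $3$ consumes, together with the forbidden set $Y$, up to roughly $4\eps m$ vertices of $B$, so the constant $5$ is not slack and the ``$O(\eps m)$ used so far'' bound deserves an explicit count.
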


\subsection{Sublinear expanders}\label{subsec:sublinear_expanders}

The proof of our absorption lemma makes use of certain sparse expanding graphs known as \define{sublinear expanders}. These graphs were introduced by Koml\'os and Szemer\'edi \cite{ks1, ks2} and have found widespread use in recent years---see \cite{shohamsublinear} for a survey. The specific variant and related results that we will be using are also due to \cite{ks1, ks2} but specifically stated in the form below in \cite{shohamsublinear}. 

\begin{defn}[sublinear expander]\label{defn:sublinear_expander}
    For $\eps, t > 0$, let $\rho(x) =\rho(x, \eps, t)$ be the function defined by, for $x \geq t/2$, 
    \[\rho(x) := \frac{\eps}{\log^2(15x/ t)}.\]
    An \define{$(\eps, t)$-expander} is a graph $G$ in which every set of vertices $U$ with $t/2 \leq |U| \leq |G|/2$ satisfies
    \[|N_G(U) \setminus U| \geq \rho(|U|)\cdot  |U|. \]
\end{defn}

The next theorem lets us find a sublinear expander in an arbitrary graph at the cost of a small loss in its average degree, whereas the one after says that such expanders have good connectivity properties. 

\begin{thm}[\cite{ks1, ks2}]\label{thm:sublinearexpander}
Let $\eps > 0$ be sufficiently small and let $t >0$. Then every graph $G$ has a subgraph $H$ which is an $(\eps, t)$-expander, and satisfies $d(H) \geq d(G)/2$ and $\delta(H) \geq d(H)/2$.
\end{thm}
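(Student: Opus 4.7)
The plan is a two-stage extremal argument: first extract the expander $H$ as a vertex-minimal subgraph of $G$ meeting a dyadically weighted density condition, and then perform a standard minimum-degree clean-up.

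\textbf{Setup.} I would define a density threshold function $\lambda : [t/2, \infty) \to \mathbb{R}_{>0}$ via a convergent dyadic product,
\[ \lambda(x) \;:=\; \tfrac{1}{2}\, d(G) \cdot \prod_{j \geq 1 \,:\, 2^{j} \leq x} \left( 1 + \frac{\eps}{\log^{2}(2^{j}/t)} \right)^{-1}. \]
Since $\sum_{j \geq 1} 1/\log^{2}(2^{j}/t)$ converges, for sufficiently small $\eps$ the product stays in $[\tfrac{1}{2}, 1]$; hence $\lambda$ is non-increasing with $\lambda(|V(G)|) \leq d(G)/2$ and $\lambda(x) \geq d(G)/4$ throughout. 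Then, among all $F \subseteq G$ with $|V(F)| \geq t/2$ and $e(F) \geq \lambda(|V(F)|)\, |V(F)|$, pick $H$ of minimum order; $G$ itself qualifies, so $H$ exists, and the lower bound on $\lambda$ immediately gives $d(H) \geq 2\lambda(|V(H)|) \geq d(G)/2$.

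\textbf{Expansion.} To verify that $H$ is an $(\eps, t)$-expander, assume for contradiction that some $U \subseteq V(H)$ with $t/2 \leq |U| \leq |V(H)|/2$ has $W := N_{H}(U) \setminus U$ of size less than $\rho(|U|)\,|U|$. Every edge of $H$ incident to $U$ lies in $H[U \cup W]$, so $e(H[U\cup W]) + e(H - U) \geq e(H)$. Both $H[U\cup W]$ and $H - U$ have strictly fewer than $|V(H)|$ vertices, so by minimality of $H$ neither reaches the $\lambda$-threshold, yielding
\[ \lambda(|U|+|W|)\bigl(|U|+|W|\bigr) + \lambda(|V(H)|-|U|)\bigl(|V(H)|-|U|\bigr) \;>\; \lambda(|V(H)|)\, |V(H)|. \]
The product defining $\lambda$ is calibrated to rule this out: at the dyadic scale of $|U|$, the saving $\lambda(|V(H)|-|U|)/\lambda(|V(H)|) \approx 1 + \eps/\log^{2}(|U|/t)$ is of order $\rho(|U|)$, while the loss from the smaller side is controlled because $|U|+|W|$ exceeds $|U|$ only by a factor of $1+\rho(|U|)$ and $\lambda(|U|+|W|)/\lambda(|V(H)|)$ is a bounded dyadic product. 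A telescoping computation over the dyadic scales produces the contradiction uniformly in $|U|$.

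\textbf{Minimum degree clean-up.} To achieve $\delta(H) \geq d(H)/2$, I iteratively delete any $v \in V(H)$ with $\deg_{H}(v) < d(H)/2$; each deletion strictly increases the average degree, so the procedure terminates at a subgraph $H'$ with $d(H') \geq d(H)$ and $\delta(H') \geq d(H')/2$. A short check shows $H'$ is still an $(\eps, t)$-expander: the removed vertices are few and of small degree, so expansion of sets of size at least $t/2$ is preserved, after possibly shrinking $\eps$ by a constant factor.

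I expect the main obstacle to be the calibration of $\lambda$—tuning the product factors so that the splitting inequality above fails at every dyadic scale of $|U|$ simultaneously and with a uniform margin. This is where the ``sufficiently small $\eps$'' hypothesis does most of its work; verifying that the min-degree trimming preserves expansion is a secondary but routine technicality absorbed by the slack in $\rho$.
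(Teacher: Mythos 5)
The paper does not prove this theorem; it is quoted from Koml\'os--Szemer\'edi (via Letzter's survey), so I am comparing your argument against the standard proof. Your overall template (extremal subgraph with respect to a weighted density functional, then a degree clean-up) is the right one, but the way you have set up the extremal choice is fatally miscalibrated: no choice of a \emph{non-increasing} threshold $\lambda$ can make your contradiction work. Concretely, writing $h=|V(H)|$, $a=|U|+|W|$ and $b=h-|U|$, your minimality of order gives $\lambda(h)h\le e(H)\le e(H[U\cup W])+e(H-U)<\lambda(a)a+\lambda(b)b$, and to get a contradiction you must show $\lambda(a)a+\lambda(b)b\le\lambda(h)h$. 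But $a,b\le h$ and $\lambda$ is non-increasing, so $\lambda(a)\ge\lambda(h)$ and $\lambda(b)\ge\lambda(h)$, whence $\lambda(a)a+\lambda(b)b\ge\lambda(h)(a+b)=\lambda(h)(h+|W|)\ge\lambda(h)h$. The inequality you need is therefore false for every non-expanding $U$, independently of how the dyadic product is tuned; there is no contradiction to be had. (A smaller symptom of the same problem: your factors $\log^2(2^j/t)$ degenerate when $2^j$ is near $t$, which is exactly why the paper's $\rho$ carries the factor $15$ inside the logarithm.)

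The fix is to reverse the roles: Koml\'os and Szemer\'edi take $H$ \emph{maximizing} $e(F)/\bigl(|F|\,g(|F|)\bigr)$ over subgraphs with at least $t/2$ vertices, where $g$ is a slowly \emph{increasing} weight confined to an interval like $[1/2,1]$ (e.g.\ $g(x)=1-\eps'/\log(15x/t)$). Maximality then bounds each piece by $e(F)\le\phi(H)|F|g(|F|)$, and the contradiction reduces to $ag(a)+bg(b)<hg(h)$, which holds because the saving $(h-|U|)\bigl(g(h)-g(h-|U|)\bigr)+|U|\bigl(g(h)-g(a)\bigr)$ is of order $|U|\cdot\eps'/\log^2(15|U|/t)$ and beats the loss $|W|g(a)\le\rho(|U|)|U|$ once $\eps\ll\eps'$; here the increasing direction of $g$ is essential, and the match between the increment of $g$ across a dyadic scale and $\rho$ at that scale is the real calibration. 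This choice also delivers $d(H)\ge d(G)\,g(|H|)/g(|V(G)|)\ge d(G)/2$ and, with a small extra argument, the minimum-degree condition (a vertex of degree below $d(H)/2$ could be deleted without decreasing the functional), so the separate trimming stage you sketch---whose effect on expansion you do not actually control, since you have no bound on how many vertices it removes---is not needed.
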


\begin{thm}[\cite{ks1,ks2}]\label{thm:conn_in_expanders}
Let $G$ be an $n$-vertex $(\eps, t)$-expander. Then, for every $x \geq t/2$ and every three sets of vertices $U_1, U_2$ and $W$, where $|U_1|, |U_2| \geq x$ and $|W| \leq \rho(x)x/4$, there is a path from $U_1$ to $U_2$ in $G-W$ of length at most $\frac{2}{\eps}\log^3(15n/t)$.      
\end{thm}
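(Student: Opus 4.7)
The proof proceeds by a simultaneous breadth-first ball-growing argument in $G - W$, a standard technique for exploiting sublinear expansion. For $j \in \{1,2\}$, set $B_0^j := U_j \setminus W$ and $B_{i+1}^j := (B_i^j \cup N_G(B_i^j)) \setminus W$. These are monotonically increasing subsets of $V(G) \setminus W$, and any vertex of $B_i^1 \cap B_i^2$ yields a $U_1$--$U_2$ path in $G - W$ of length at most $2i$. The strategy is therefore to show that both balls reach size exceeding $n/2$ after at most $\frac{1}{\eps}\log^3(15n/t)$ steps, at which point their union overshoots $|V(G) \setminus W|$ and forces an intersection.

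The key growth estimate is that whenever $t/2 \le |B_i^j| \le n/2$ one has
\[
|B_{i+1}^j| \ge \bigl(1 + \tfrac{3}{4}\rho(|B_i^j|)\bigr)\cdot |B_i^j|.
\]
Indeed, the expansion property of \cref{defn:sublinear_expander} yields $|N_G(B_i^j) \setminus B_i^j| \ge \rho(|B_i^j|) \cdot |B_i^j|$, and a short calculus check shows that the map $x \mapsto x\rho(x) = \eps x/\log^2(15x/t)$ is increasing on $[t/2,\infty)$ (using $\log(15/2) > 2$). Hence $\rho(|B_i^j|)\,|B_i^j| \ge \rho(x)\,x \ge 4|W|$, so deleting $W$ from the newly reached neighbourhood still leaves at least three quarters of the expansion gain. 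Iterating and using $\rho(|B_i^j|) \ge \rho(n) = \eps/\log^2(15n/t)$ together with $\log(1+y) \ge y/2$ for $y \le 1$, one reaches $|B_i^j| > n/2$ after at most $i \le \frac{1}{\eps}\log^3(15n/t)$ steps, up to adjustment of absolute constants. Once both $|B_i^1|$ and $|B_i^2|$ exceed $n/2$, they must share a vertex in $V(G) \setminus W$, and concatenating the two BFS paths yields the desired $U_1$--$U_2$ path of length at most $\frac{2}{\eps}\log^3(15n/t)$.

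The main technical obstacle is ensuring that $|B_0^j| \ge t/2$, so that the expansion inequality can be applied from the very first step: removing $W$ from $U_j$ might in principle drop the size below $t/2$ when $x$ is close to $t/2$. This is handled by combining $|W| \le \rho(x)x/4$ with the bound $\rho(x) \le \rho(t/2) \le \eps/\log^2(15/2) \ll 1$, which keeps $|B_0^j|$ at a constant fraction of $x$; any residual gap can be absorbed by one preliminary application of expansion to $U_j$ itself before deleting $W$, at no cost in the final step count. The remaining details are routine adjustments of implicit absolute constants.
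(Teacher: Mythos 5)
This theorem is not proved in the paper --- it is quoted from Koml\'os--Szemer\'edi via Letzter's survey --- so the comparison is against the standard proof in the literature, and your ball-growing argument is exactly that proof: grow BFS balls around $U_1$ and $U_2$ inside $G-W$, use monotonicity of $y\mapsto y\rho(y)$ on $[t/2,\infty)$ to show the loss to $W$ eats at most a quarter of each expansion step, and stop once both balls exceed $n/2$. Your treatment of the initial step ($|B_0^j|$ possibly dipping below $x$ or $t/2$) is also fine.

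The one place where your accounting falls short of the stated bound is the step count. Bounding every step uniformly by $\rho(|B_i^j|)\ge\rho(n)$ and using $\log(1+y)\ge y/2$ gives roughly $\tfrac{8}{3\eps}\log(n/t)\log^2(15n/t)$ steps per ball, hence a path of length about $\tfrac{16}{3\eps}\log^3(15n/t)$ --- the right order, but not the constant $\tfrac{2}{\eps}$ in the statement. To get that constant one must sum the doubling times over dyadic scales: the number of steps to grow a ball from size $y$ to $2y$ is about $\tfrac{4\log 2}{3\rho(2y)}$, and $\sum_{k}\log^2(15\cdot 2^k)\le\tfrac{1}{3\log 2}\log^3(15n/t)$, which yields at most $\tfrac{4}{9\eps}\log^3(15n/t)$ steps per ball and a path of length under $\tfrac{2}{\eps}\log^3(15n/t)$. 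Since you explicitly wave at "adjustment of absolute constants," and the constant is immaterial to how the theorem is used in this paper (in \cref{lem:connecting_lemma} the bound is absorbed into $\log^4 t$), this is a quantitative looseness rather than a structural gap --- but as written your estimate does not deliver the constant in the statement.
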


\section{Tree covering}\label{sec:tree_covering}

In this section, we prove \cref{thm:tree_cover_main}, which provides close-to-matching upper and lower bounds on $tc_r(\delta)$ for each choice of $r \geq2$ and $\delta \in (0,1)$. These bounds will serve as key tools in our cycle partitioning results later on.

For the proof of \cref{thm:tree_cover_main}, in \cref{sec:hypergraph_connection} we will show that determining $tc_{r}(\delta)$ is essentially equivalent to determining the transversal number of certain multi-hypergraphs we call \define{$\delta$-intersecting} (see \cref{defn:deltainter} below). With this equivalence in hand, we will be able to derive our lower bound (in \cref{sec:lower_bound_tree}) and general upper bound (in \cref{sec:upper_bound_tree}) mostly from results of \cite{bucickorandisudakov} (our upper bound for $\delta \geq 1/2$ uses a different, though straightforward, argument).

\subsection{Translating to a hypergraph transversal problem}\label{sec:hypergraph_connection}

Given a multi-hypergraph $\ca H$, a \define{transversal} for $\ca H$ is a set $S \subseteq V(\ca H)$ intersecting every edge of $\ca H$. The \define{transversal number} of $\ca H$, denoted $\tau(\ca H)$, is the size of the smallest transversal for $\ca H$. It is well-known that questions about monochromatic tree covering can often be reduced to hypergraph transversal problems (for instance, see \cite{bucickorandisudakov, EPB, gyarfaspartite, gyarfassurvey}). This is generally achieved by constructing a multi-hypergraph which encodes the monochromatic component structure of the graph, as in the following definition. 

\begin{defn}[connectivity hypergraph]
Let $G$ be an $r$-edge-coloured graph and let $T_1, \dots, T_\ell$ be its monochromatic components. We define its \define{connectivity hypergraph} $\ca C(G)$ as the multi-$r$-graph on vertex set $\{T_1, \dots, T_\ell\}$ and having, for each vertex $v \in V(G)$, an edge $e_v = \{T_{i_1}, \dots, T_{i_r}\}$ where each $T_{i_j}$ is the (unique) $j$-coloured component containing $v$.  
\end{defn}

Note that, under our notion of monochromatic component (see \cref{sec:notation}), each vertex of $G$ is contained in some monochromatic component in each colour in $[r]$, and so indeed $\ca C(G)$ is $r$-uniform. Further observe that, letting $\ca T_i \subseteq \{T_1, \dots, T_\ell\}$ be the set of $i$-coloured components for each $i \in [r]$, the hypergraph $\ca C(G)$ is $r$-partite with partition $\{\ca T_1, \dots, \ca T_r\}$. Also, we have the following fact. 

\begin{fact}\label{fact:transversal} Let $r \geq 2$. For each $r$-edge-coloured graph $G$, we have $tc(G) = \tau(\ca C(G))$. 
\end{fact}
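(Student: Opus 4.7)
The plan is to prove the two inequalities $tc(G) \le \tau(\ca C(G))$ and $tc(G) \ge \tau(\ca C(G))$ separately, via the evident bijective correspondence between transversals of $\ca C(G)$ and vertex-covers of $G$ by monochromatic components. The key observation is that, by construction, the transversal condition ``$S \subseteq \{T_1,\dots,T_\ell\}$ meets $e_v$ for every $v \in V(G)$'' is literally the statement ``for every $v \in V(G)$ there is some monochromatic component $T \in S$ containing $v$,'' since $e_v$ consists of exactly the $r$ monochromatic components (one per colour) containing $v$.

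For the direction $tc(G) \le \tau(\ca C(G))$, I would take a minimum transversal $S = \{T_{i_1}, \dots, T_{i_k}\}$ of $\ca C(G)$ with $k = \tau(\ca C(G))$ and, for each $T_{i_j} \in S$, choose a spanning tree $F_j$ of $T_{i_j}$ (each $T_{i_j}$, being a monochromatic component, is connected in its colour class, hence admits such a spanning tree). By the observation above, $\{F_1,\dots,F_k\}$ is a collection of monochromatic trees whose union covers $V(G)$, so $tc(G) \le k$.

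For the reverse direction $tc(G) \ge \tau(\ca C(G))$, I would take a minimum tree cover $\{F_1, \dots, F_k\}$ of $G$ with $k = tc(G)$. Each $F_j$ is a monochromatic tree, hence a connected subgraph of $G_{c_j}$ for some colour $c_j \in [r]$; therefore $F_j$ is contained in a unique monochromatic component $T_{i_j}$ of $G$. Because $\bigcup_j V(F_j) = V(G)$, every $v \in V(G)$ lies in at least one such $T_{i_j}$, which means that $\{T_{i_1},\dots,T_{i_k}\}$ meets every hyperedge $e_v$ and is therefore a transversal of $\ca C(G)$ of size at most $k$; hence $\tau(\ca C(G)) \le k$.

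There is essentially no obstacle here: the statement is essentially a restatement of definitions, and the only mild point of care is remembering that the degenerate trees allowed in a tree cover (single vertices or edges) are also contained in well-defined monochromatic components under the conventions of \cref{sec:notation}, so the correspondence is clean in both directions.
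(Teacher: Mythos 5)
Your proof is correct and is exactly the definitional unwinding the paper has in mind; the paper states this as a Fact without proof precisely because it reduces to the observation that transversals of $\ca C(G)$ correspond to families of monochromatic components covering $V(G)$, each of which contains a spanning monochromatic tree, and conversely each monochromatic tree lies in a unique monochromatic component. Both directions and the remark about degenerate trees are handled properly, so nothing is missing.
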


We will be particularly focusing on hypergraphs which fall under the following definition.

\begin{defn}[$\delta$-intersecting]\label{defn:deltainter} Let $\delta \in [0,1]$. A multi-hypergraph $\ca H$ is said to be \define{$\delta$-intersecting} if every edge $e \in E(\ca H)$ intersects at least $(1 - \delta) \cdot e(\ca H)$ edges of $\ca H$.  
\end{defn}

Next, we relate this definition to edge-coloured graphs with high minimum degree.

\begin{fact}\label{fact:intersectingaux}
Let $r \geq 2$ and $\delta \in (0,1]$. If $G$ is an $r$-edge-coloured graph with $\delta(G) \geq (1- \delta)|V(G)| -1$, then $\ca C(G)$ is a $\delta$-intersecting.     
\end{fact}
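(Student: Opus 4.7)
The plan is to exploit the simple observation that if $uv$ is an edge of $G$ of colour $c$, then $u$ and $v$ lie in the same $c$-coloured component $T$, so $T$ belongs to both hyperedges $e_u$ and $e_v$ of $\ca C(G)$. Hence adjacency in $G$ forces the corresponding hyperedges to intersect in $\ca C(G)$, and a minimum degree bound on $G$ translates directly into a lower bound on the number of hyperedges each edge of $\ca C(G)$ meets.

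Concretely, I would first note that by construction $\ca C(G)$ has exactly one hyperedge per vertex of $G$, so $e(\ca C(G)) = |V(G)|$ (the hypergraph may have parallel edges, but they are counted with multiplicity). Fixing an arbitrary vertex $v \in V(G)$, I would then argue that $e_v$ intersects $e_u$ for every $u \in N_G(v) \cup \{v\}$: if $u = v$ this is trivial, and otherwise letting $c = \chi(uv)$ and $T$ be the $c$-coloured component containing both endpoints gives $T \in e_u \cap e_v$. Applying the minimum degree hypothesis, $e_v$ therefore intersects at least
\[
\deg_G(v) + 1 \;\geq\; (1-\delta)|V(G)| \;=\; (1-\delta)\, e(\ca C(G))
\]
hyperedges of $\ca C(G)$. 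Since $v$ was arbitrary, $\ca C(G)$ is $\delta$-intersecting.

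There is essentially no obstacle here; the only mild subtlety is the off-by-one in the hypothesis $\delta(G) \geq (1-\delta)|V(G)| - 1$, which is precisely what compensates for including $e_v$ itself among the intersecting hyperedges and thus allows the conclusion to be stated cleanly with the factor $(1-\delta)$ rather than $(1-\delta) - 1/|V(G)|$.
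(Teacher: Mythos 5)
Your proposal is correct and follows exactly the paper's argument: each $e_v$ intersects $e_{v'}$ for every neighbour $v'$ of $v$ (via the monochromatic component of the edge $vv'$), and counting $e_v$ itself together with $e(\ca C(G)) = |V(G)|$ gives the $(1-\delta)$ fraction. Nothing to add.
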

\begin{proof}
Any given edge $e_v \in E(\ca C(G))$ intersects $e_{v'}$ for each $v' \in N_G(v)$ since $v$ and $v'$ necessarily belong to the same monochromatic component in the colour of $vv'$. Also including itself, $e_v$ intersects at least $(1 - \delta)|V(G)| -1 + 1= (1-\delta)e(\ca C(G))$ edges of $\ca C(G)$. 
\end{proof}

The next proposition provides a partial converse to the previous observation. 

\begin{proposition}\label{proposition:frommultitocolour}
    Let $r \geq2$ and $\delta \in (0,1]$. Let $\ca H$ be a $\delta$-intersecting $r$-partite multi-$r$-graph. Then there exists an $r$-edge-coloured graph $G$ with $\delta(G) \geq (1-\delta)|V(G)|-1$ and $tc(G) = \tau(\ca H)$.
\end{proposition}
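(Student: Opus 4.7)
The plan is to realise $\ca H$ as (essentially) the connectivity hypergraph of an $r$-edge-coloured graph $G$ obtained by blowing up the incidence structure of $\ca H$. I would set $V(G) := \bigsqcup_{e \in E(\ca H)} V_e$, where each cluster $V_e$ has size $N := 2r+1$, and let $E(G)$ be the union of all intra-cluster edges together with every bipartite edge between $V_e$ and $V_{e'}$ for each pair $e \neq e'$ with $e \cap e' \neq \emptyset$. The minimum degree bound follows from a direct count: writing $D_e := |\{e' \in E(\ca H) : e \cap e' \neq \emptyset\}|$, every $v \in V_e$ satisfies
\[\deg(v) \;=\; (N-1) + N(D_e - 1) \;=\; N D_e - 1 \;\geq\; N(1-\delta)\,e(\ca H) - 1 \;=\; (1-\delta)|V(G)| - 1,\]
where the inequality is precisely the $\delta$-intersecting hypothesis applied to $D_e$.

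Next I colour the edges so that the monochromatic components of $G$ correspond one-to-one with the vertices of $\ca H$ appearing in some edge. Between each pair of clusters $V_e, V_{e'}$ with $e \cap e' \neq \emptyset$, I partition the $N^2$ cross-edges into $|e \cap e'|$ non-empty parts indexed by $s \in e \cap e'$, colouring the $s$-part with the unique $i_s \in [r]$ such that $s \in \ca T_{i_s}$. Inside each cluster $V_e$, I decompose the complete graph $K_{V_e}$ into $r$ edge-disjoint Hamilton cycles---possible since $N = 2r+1$ is odd, by Walecki's theorem---and colour the $i$-th cycle with colour $i$, making each $V_e$ spanning-connected in every colour. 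With this choice, for every $i \in [r]$ and every $s \in \ca T_i$ appearing in some edge, the colour-$i$ component of $G$ containing $V_e$ (for any $e$ with $e_i = s$) is exactly $\bigcup_{e' : e'_i = s} V_{e'}$: within a cluster, connectivity comes from the colour-$i$ Hamilton cycle, and across clusters $V_e, V_{e'}$ a colour-$i$ edge exists if and only if $e_i = e'_i$.

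It follows that $\ca C(G)$ is the multi-$r$-graph obtained from $\ca H$ by replacing each edge with $N$ parallel copies (and discarding any isolated vertices of $\ca H$); both of these operations preserve the transversal number, so $\tau(\ca C(G)) = \tau(\ca H)$, and \cref{fact:transversal} gives $tc(G) = \tau(\ca H)$. The one delicate step is the intra-cluster colouring: were some colour class to fail to span $V_e$, the colour-$i$ component of $V_e$ could split into smaller pieces, introducing spurious vertices into $\ca C(G)$ and potentially inflating its transversal number above $\tau(\ca H)$. The Hamilton-cycle decomposition of $K_{2r+1}$ is exactly what ensures each cluster is simultaneously connected in every colour, so that the component structure of $G$ faithfully mirrors $\ca H$; everything else is a routine count.
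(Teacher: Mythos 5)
Your construction is correct, and it reaches the same goal by a genuinely different route. The paper works with one vertex of $G$ per edge of the hypergraph (after first duplicating each edge $4r$ times so that every hypergraph vertex lies in at least $4r$ edges), builds a sparse skeleton colour by colour---for each colour $i$ and each hypergraph vertex, it finds a Hamilton path through the class of edges containing that vertex via Dirac's theorem applied to the complement of the graph built so far---and only afterwards densifies by adding every remaining edge between intersecting pairs in a colour witnessing the intersection. You instead blow each hyperedge up to a cluster of $2r+1$ vertices, take the dense graph immediately (complete inside clusters, complete bipartite between clusters of intersecting edges), and solve the colouring problem directly: Walecki's decomposition of $K_{2r+1}$ into $r$ Hamilton cycles makes every cluster spanning-connected in all $r$ colours simultaneously, while the cross-edges are partitioned into non-empty colour classes indexed by the shared hypergraph vertices. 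Your degree count $ND_e-1\ge N(1-\delta)e(\ca H)-1=(1-\delta)|V(G)|-1$ is exactly right, and the identification of the colour-$i$ components with the non-isolated vertices of $\ca T_i$ is sound (the only points needing care---that each of the at most $r$ parts of the $N^2$ cross-edges is non-empty, and that parallel hyperedges get distinct but fully-joined clusters---you have handled). The two constructions trade off the same resource in different places: the paper's $4r$-fold edge duplication and your $(2r+1)$-fold vertex blow-up both serve to create enough room per hyperedge to realise connectivity in all $r$ colours at once; your version replaces the iterative Dirac argument with a single off-the-shelf Hamilton decomposition, which makes the verification of $\ca C(G)=\ca H$ (up to edge multiplicities and isolated vertices, neither of which affects $\tau$) somewhat more transparent.
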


\begin{proof}
We begin by defining $\ca H'$ as the $r$-partite multi-$r$-graph obtained from $\ca H$ by substituting each $e_v \in E(\ca H)$ with $4r$ identical copies $e_v^1, \dots, e^{4r}_v$ copies of $e_v$. $\ca H$ and $\ca H'$ have the same underlying $r$-graph and so $\tau(\ca H') = \tau(\ca H)$. It is also easy to see that each $e_v^i \in E(\ca H')$ intersects at least
\[4r \cdot  (1-\delta)e(\ca H) = (1-\delta) e(\ca H')\]
edges of $\ca H'$, thus implying that $\ca H'$ is also $\delta$-intersecting. We will further assume that $\ca H'$ contains no isolated vertices, as otherwise removing them does not affect any of these properties. Crucially, this guarantees that each vertex in $\ca H'$ is contained in at least $4r$ edges. 

To prove the proposition, it will be sufficient by \cref{fact:transversal} to construct an $r$-edge-coloured graph $G$ with minimum degree at least $(1- \delta)|V(G)|$ satisfying $\ca C(G) = \ca H'$. To accomplish this, we begin by constructing a (possibly) sparse $r$-edge-coloured graph $G'$ also satisfying $\ca C(G') = \ca H'$ and then boost its minimum degree by filling in many of its missing edges.

Recall that $\ca H'$ is $r$-partite and let $V_1 \cup \dots \cup V_r$ be a partition for it. Let $V_i = \{T^i_1, \dots, T^i_{\ell_i}\}$ for each $i \in [r]$. Now we construct $G'$ by starting with an empty graph $G^0$ on vertex set $W := E(\ca H')$ and progressively adding spanning monochromatic linear forests in each colour in $[r]$.

We now describe an iteration of this procedure. Let us suppose that for some $i \in [r]$ we have previously obtained an $(i-1)$-edge-coloured graph $G^{i-1}$ on $W$ with $\Delta(G^{i-1}) \leq 2(i-1)$; we will argue that this property is maintained at the end of the next iteration replacing $i-1$ with $i$. For each $j \in [\ell_i]$, let $E^i_j \subseteq W$ be the set of edges of $\ca H'$ containing $T_j^i$, and observe that $\{E_j^i: j \in [\ell_i]\}$ forms a partition of $W$, as $\ca H'$ is $r$-partite $r$-uniform with $V_i$ being one of its partition classes. Also, each $T_j^i$ is contained in at least $4r$ edges in $\ca H'$ and so $|E_j^i| \geq 4r$. Then, for each $j \in [\ell_i]$, we have that the complement of $G^{i-1}[E_j^i]$ has minimum degree at least
\[|E_j^i| - \Delta(G^{i-1}) \geq |E_j^i| - 2(i-1) \geq |E_j^i| - 2r \geq \frac{|E_j^i|}{2},\]
and thus by Dirac's theorem the complement of $G^{i-1}$ contains a path $P^i_j$ whose vertex set is precisely $E_j^i$. The union $F_i := \bigcup_{j \in [\ell_i]} P^i_j$ is a linear forest spanning $W$. Now we let $G^i$ be obtained from $G^{i-1}$ by adding the edges of $F_i$ and colouring them in colour $i$. We have $\Delta(G^i) \leq 2i$, as needed for the procedure to continue. 

When the procedure terminates, we obtain a graph $G' := G^r$ in which any two vertices belong to the same $i$-coloured component for any given $i \in [r]$ if and only if the corresponding edges in $E(\ca H')$ intersect in $V_i$. It is easy to see that this implies $\ca C(G') = \ca H'$. To finish the construction of $G$, we take every intersecting pair of distinct edges $x, y \in E(\ca H')$ which are not adjacent as vertices of $G'$, and we simply connect them by an edge in a colour $i \in [r]$ such that $V_i \cap x \cap y \neq \emptyset$. Note that this implies that $T_j^i \in x \cap y$ for some $j \in [\ell_i]$ and thus $x, y \in E_j^i$. Thus, $x$ and $y$ are already connected in $G'$ by the $i$-coloured path spanning $E_j^i$ and so adding the $i$-coloured edge $xy$ does not affect the monochromatic component structure. Once this has been done for each choice of $x$ and $y$, we obtain a graph $G$ with $\ca C(G) = \ca H'$ and
\[\delta(G) \geq (1-\delta) e(\ca H') - 1\geq (1-\delta)|V(G)|-1,\]
since each $x$ intersects itself in $\ca H'$ but cannot be adjacent to itself in $G$. 
\end{proof}

\subsection{Lower bound}\label{sec:lower_bound_tree}

Our lower bound construction for \cref{thm:tree_cover_main} is obtained by constructing a $\delta$-intersecting hypergraph with high transversal number and applying \cref{proposition:frommultitocolour} to it. We now describe the hypergraph construction we will be using, which was introduced in \cite{bucickorandisudakov} for a different, though related, problem.

Given $m \geq 1$ and $t \geq r \geq 2$, we define $H_{r,t,m}$ to be the following $r$-partite $r$-graph. In each of the $r$ parts of its partition, there is a set $U_i$ with $|U_i| = m$ of so-called \define{important} vertices. For every choice of $r-t$ important vertices $u_1, \dots, u_{r-t}$ each belonging to a different $U_i$, we add an edge containing those important vertices together with $t$ new vertices unique to this edge. Each of these new vertices is placed into a distinct part not already covered by the $u_i$. 

\begin{fact}[Proposition 6.5 in {\cite{bucickorandisudakov}}]\label{fact:coverHrtm}
    $\tau(H_{r,t,m}) = (t+1)m$. 
\end{fact}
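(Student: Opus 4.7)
My plan is to prove the bounds $\tau(H_{r,t,m}) \leq (t+1)m$ and $\tau(H_{r,t,m}) \geq (t+1)m$ separately.

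For the upper bound, I would exhibit the explicit transversal $S := U_1 \cup \cdots \cup U_{t+1}$, which has size $(t+1)m$. Every edge of $H_{r,t,m}$ contains important vertices in some $r-t$ distinct parts. Since only $r-t-1$ parts lie outside $\{1, \dots, t+1\}$, pigeonhole forces at least one of these $r-t$ parts to be in $\{1, \dots, t+1\}$, and hence the edge meets $S$.

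For the lower bound, I would first argue that among the minimum transversals there is one consisting entirely of important vertices. Suppose $S$ is a minimum transversal containing some non-important vertex $v$ in part $V_i$, and let $e$ be the unique edge of $H_{r,t,m}$ containing $v$ (uniqueness is built into the construction). Picking any important vertex $u$ of $e$ (which necessarily lies in a part $V_j$ with $j \neq i$), the set $S' := (S \setminus \{v\}) \cup \{u\}$ still covers every edge: deleting $v$ only loses coverage of $e$, which $u \in e$ immediately restores. By minimality of $|S|$, it must be that $u \notin S$ and $|S'| = |S|$, so the swap strictly decreases the number of non-important vertices in $S$. Iterating until none remain yields a minimum transversal contained in $U_1 \cup \cdots \cup U_r$.

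Now assuming $S \subseteq U_1 \cup \cdots \cup U_r$, let $K := \{i \in [r] : U_i \subseteq S\}$. If some $(r-t)$-subset $I \subseteq [r]$ were disjoint from $K$, then each $U_i$ with $i \in I$ would contain a vertex outside $S$, and choosing one such vertex from each part indexed by $I$ would produce an edge of $H_{r,t,m}$ avoiding $S$, a contradiction. Hence every $(r-t)$-subset of $[r]$ meets $K$, which forces $|[r] \setminus K| \leq r-t-1$, so $|K| \geq t+1$ and therefore $|S| \geq \sum_{i \in K} |U_i| = (t+1)m$. The only non-routine step is the swap reduction to all-important transversals; once that is in place, both bounds reduce to pigeonhole across the $r$ parts.
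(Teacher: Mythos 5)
Your proof is correct. The paper does not prove this statement itself---it is imported verbatim as Proposition 6.5 of \cite{bucickorandisudakov}---but your argument is complete and is essentially the standard one: the upper bound via the transversal $U_1\cup\dots\cup U_{t+1}$ and pigeonhole, and the lower bound via the swap reduction to all-important transversals (valid because each non-important vertex lies in exactly one edge) followed by the observation that fewer than $t+1$ fully covered classes would leave an $(r-t)$-set of classes from which an uncovered edge can be assembled. The only implicit hypotheses are $r-t\ge 1$ (so each edge has an important vertex to swap to) and $t+1\le r$ (so the upper-bound transversal exists); both hold in the paper's application, where $t=\lfloor r/3\rfloor$, despite the typo ``$t\ge r\ge 2$'' in the definition of $H_{r,t,m}$.
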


We prove that, with the right choice of parameters, this hypergraph is $\delta$-intersecting. 

\begin{lem}\label{lem:deltaintersect} Let $r \geq 3$ and $\delta \in [e^{-r/5}, 1)$. Let $t := \lfloor r/3\rfloor$ and $m:= \left\lfloor \frac{r}{5\log(1/\delta)} \right\rfloor$. Then $H_{r, t, m}$ is $\delta$-intersecting. 
\end{lem}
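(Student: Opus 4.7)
The plan is to fix an arbitrary edge $e \in E(H_{r,t,m})$ and show that the fraction of edges disjoint from $e$ is at most $\delta$. Let $A \subseteq [r]$, with $|A| = r - t$, denote the set of parts in which $e$ picks an important vertex, and let $u_a \in U_a$ be that chosen vertex for each $a \in A$. First I would observe that, since the $t$ ``new'' vertices of $e$ appear in no other edge, another edge $e'$ (with active set $A'$, $|A'| = r - t$) is disjoint from $e$ if and only if, at every position $a \in A \cap A'$, the edge $e'$ selects an important vertex of $U_a$ different from $u_a$.

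Since a uniformly random edge $e'$ is obtained by choosing a uniform $(r-t)$-subset $A' \subseteq [r]$ together with independent uniform choices of one important vertex in $U_a$ for each $a \in A'$, a direct computation will give
\[
\frac{|\{e' \in E(H_{r,t,m}) : e' \cap e = \emptyset\}|}{e(H_{r,t,m})} \;=\; \mathbb{E}\!\left[\left(1 - \tfrac{1}{m}\right)^{\!X}\right],
\]
where $X := |A \cap A'|$ is a hypergeometric random variable with mean $\mu = (r-t)^2/r \geq 4r/9$ (using $r - t \geq 2r/3$, which holds since $t \leq r/3$).

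The main step will be to decouple this expectation. I would rewrite $\left(1 - \tfrac{1}{m}\right)^X = \prod_{a \in A}\left(1 - \tfrac{1}{m}\mathbf{1}[a \in A']\right)$ and then exploit the fact that the indicators $\{\mathbf{1}[a \in A']\}_{a \in [r]}$ are negatively associated, because $|A'|$ is fixed. Since each factor is a nonnegative decreasing function of its own indicator, the standard product inequality for negatively associated variables---or equivalently Hoeffding's comparison between sampling with and without replacement, applied to the convex map $x \mapsto (1-1/m)^x$---yields
\[
\mathbb{E}\!\left[\left(1-\tfrac{1}{m}\right)^{\!X}\right] \;\leq\; \left(1 - \tfrac{p}{m}\right)^{\!r-t}, \qquad p := \tfrac{r-t}{r}.
\]
This decoupling is the one non-routine step of the argument; everything else amounts to arithmetic.

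To conclude, I would apply $(1-x)^n \leq e^{-nx}$ and use $(r-t)^2/r \geq 4r/9$ together with $m \leq r/(5\log(1/\delta))$ (which is meaningful because $\delta \geq e^{-r/5}$ forces $m \geq 1$) to obtain
\[
\left(1 - \tfrac{p}{m}\right)^{\!r-t} \;\leq\; \exp\!\left(-\tfrac{(r-t)^2}{rm}\right) \;\leq\; \exp\!\left(-\tfrac{4r}{9m}\right) \;\leq\; \exp\!\left(-\tfrac{20\log(1/\delta)}{9}\right) \;\leq\; \delta.
\]
The specific constants $1/3$ in $t$ and $1/5$ in $m$ are chosen precisely so that the exponent ultimately exceeds $\log(1/\delta)$ by a constant factor of $20/9 > 1$, which provides exactly the slack needed to close the computation cleanly.
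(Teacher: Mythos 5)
Your proof is correct, and its overall shape matches the paper's: fix an edge $e$, sample a uniformly random edge $e'$ by first choosing the set $A'$ of important parts and then the important vertices independently, and bound the probability of disjointness. The difference is in the key middle step. You bound $\mathbb{E}\bigl[(1-1/m)^{|A\cap A'|}\bigr]$ via negative association of the inclusion indicators (equivalently, Hoeffding's sampling-without-replacement comparison applied to the convex map $x\mapsto(1-1/m)^x$), obtaining $(1-p/m)^{r-t}$ with $p=(r-t)/r$ and then using the mean $(r-t)^2/r\geq 4r/9$. The paper avoids any decoupling: since $A$ and $A'$ are both $(r-t)$-subsets of $[r]$, inclusion--exclusion gives the \emph{deterministic} bound $|A\cap A'|\geq 2(r-t)-r=r-2t\geq r/3$, so conditionally on any choice of $A'$ the disjointness probability is at most $(1-1/m)^{\lfloor r/3\rfloor}\leq e^{-r/(5m)}\leq\delta$ pointwise. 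So the step you flag as the one non-routine ingredient is in fact unnecessary; the pointwise pigeonhole bound $r-2t$ is only slightly weaker than the mean $4r/9$ and already clears $\log(1/\delta)$ with room to spare. Your route buys a marginally better constant in the exponent ($20/9$ versus $2$ via $\lfloor r/3\rfloor \ge r/5$) at the cost of invoking a comparison inequality; both close the argument.
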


\begin{proof} Note that the lower bound on $\delta$ ensures that $m \geq 1$ and so $H_{r,t,m}$ is a non-empty graph. Let $e \in E(H_{r,t,m})$ be an arbitrary edge and let $u_1, \dots, u_{r-t}$ be its important vertices. By relabelling the parts if necessary, we may assume that $u_i \in U_i$ for each $i \in [r-t]$. 

Now let us compute the probability that a uniformly chosen random edge $e' \in E(H_{r,t,m})$ intersects $e$. Note that the uniform distribution on $E(H_{r,t,m})$ can be simulated by first picking $r-t$ important classes $U_{i_1}, \dots, U_{i_{r-t}}$ uniformly at random, and then choosing vertices $u_{i_1}' \in U_{i_1}, \dots, u'_{i_{r-t}} \in U_{i_{r-t}}$ uniformly and independently of each other. Then $e'$ is picked to be the unique edge containing $u_{i_1}', \dots, u'_{i_{r-t}}$. 

Observe that if we condition on any choice of $U_{i_1}, \dots, U_{i_{r-t}}$, then we have \[|\{1, \dots, r-t\} \cap \{i_1, \dots, i_{r-t}\}| \geq r - 2t \geq \frac{r}{3}.\]
So, let $j_1, \dots, j_{\lfloor r/3 \rfloor}$ belong to this intersection. Then, 
\[\mathbb{P}\left( u'_{j_i} \neq u_{j_i} \mbox{ for each } 1 \leq i \leq \lfloor r/3\rfloor \right) = \left( 1- \frac{1}{m} \right)^{\lfloor r/3 \rfloor} \leq \exp\left\{-\frac{r}{5m}\right\} \leq \delta,\]
where in the first inequality we used $\lfloor r/3\rfloor \geq r/5$ since $r \geq 3$. Thus, conditioned on any choice of $U_{i_1}, \dots, U_{i_{r-t}}$, the probability that $e'$ is disjoint from $e$ is at most $\delta$. But then this remains true without conditioning on this choice. This implies that $e$ intersects at least \((1 - \delta) \cdot e(H_{r,t,m})\)
distinct edges of $H_{r,t,m}$. This completes the proof since our choice of $e$ was arbitrary.
\end{proof}

Now we are ready to prove our lower bound.

\begin{lem}\label{lem:lower_bound_tree}
    Let $r \geq 2$ and $\delta \in (0,1)$. There are arbitrarily large $r$-edge-coloured graphs $G$ with $\delta(G) \geq ( 1- \delta)|V(G)|$ and 
    \[tc(G) \geq \frac{r}{120} \left\lceil \frac{r}{\log(1/\delta)} \right\rceil.\]
\end{lem}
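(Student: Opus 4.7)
The plan is to reduce the problem to a hypergraph question via \cref{proposition:frommultitocolour}: I would construct a $\delta$-intersecting $r$-partite multi-$r$-graph $\mathcal{H}$ with $\tau(\mathcal{H})$ at least the claimed lower bound, then appeal to the proposition to obtain the required edge-coloured graph $G$. The construction of $\mathcal{H}$ splits into three regimes.

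In the main regime ($r \geq 3$ and $\delta \geq e^{-r/5}$) I would take $\mathcal{H} := H_{r,t,m}$ with $t = \lfloor r/3 \rfloor$ and $m = \lfloor r/(5\log(1/\delta)) \rfloor$: \cref{lem:deltaintersect} gives that it is $\delta$-intersecting, while \cref{fact:coverHrtm} gives $\tau(\mathcal{H}) = (t+1)m$. The required inequality then follows from $t+1 \geq r/3$ together with $m \geq r/(10\log(1/\delta))$ (which holds since $\delta \geq e^{-r/5}$ ensures $r/(5\log(1/\delta)) \geq 1$, so $\lfloor \cdot \rfloor \geq \cdot/2$) and $\lceil r/\log(1/\delta)\rceil \leq 2r/\log(1/\delta)$. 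In the sparse regime ($r \geq 3$ and $\delta < e^{-r/5}$, so the target is only $\Theta(r)$), I would use $\mathcal{H} := H_{r,t,1}$ with the same $t$: any two of its edges, indexed by $t$-subsets $S, T \subseteq [r]$ of ``new'' parts, share every important vertex in $[r] \setminus (S \cup T)$, a non-empty set since $|S \cup T| \leq 2t \leq r-1$ for $r \geq 3$. Hence $\mathcal{H}$ is even $0$-intersecting with $\tau(\mathcal{H}) = t+1 \geq r/3$, which dominates $\frac{r}{120}\lceil r/\log(1/\delta)\rceil \leq r/24$. Finally, for $r = 2$ the family $H_{r,t,m}$ degenerates (one has $t = 0$), so I would take $\mathcal{H} := K_{m,m}$ viewed as a $2$-partite $2$-graph with $m := \lfloor 1/(1-\sqrt{\delta}) \rfloor$: each edge meets $2m-1$ of the $m^2$ others, which (for this choice of $m$) makes $\mathcal{H}$ $\delta$-intersecting, and $\tau(K_{m,m}) = m$. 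The bound $m \geq \frac{1}{60}\lceil 2/\log(1/\delta)\rceil$ then drops out from $\sqrt{\delta} \geq \delta$ and $\log(1/\delta) \geq 1-\delta$.

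Once $\mathcal{H}$ is in hand, \cref{proposition:frommultitocolour} produces $G$ with $tc(G) = \tau(\mathcal{H})$ but only $\delta(G) \geq (1-\delta)|V(G)| - 1$. To absorb the stray $-1$, I would rerun the construction with a slightly smaller parameter $\delta^{\ast} = (1-\eta)\delta$ for sufficiently small $\eta > 0$: the resulting graph satisfies $\delta(G) \geq (1-\delta^{\ast})|V(G)| - 1 \geq (1-\delta)|V(G)|$ once $|V(G)|$ is large enough, and the shift $\delta \to \delta^{\ast}$ changes $\tau(\mathcal{H})$ by only a $(1+o(1))$ factor, which is absorbed by the slack in the constant $1/120$. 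To obtain arbitrarily large graphs, I would finally replicate each edge of $\mathcal{H}$ $N$ times to form $\mathcal{H}^{N}$: this preserves the $\delta$-intersecting property (each edge now meets $N$ times as many edges, out of $N$ times the total) and leaves $\tau$ unchanged, while the associated graph has $|V(G)| = N \cdot e(\mathcal{H})$ vertices.

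The main obstacle is the bookkeeping required to verify that the constant $1/120$ is large enough in each regime—especially near the boundary $\delta = e^{-r/5}$, where the floor in the definition of $m$ costs a factor of $2$, and in the $r=2$ case, where a separate construction is needed and the conversion between $1-\sqrt{\delta}$, $1-\delta$, and $\log(1/\delta)$ must be tracked carefully.
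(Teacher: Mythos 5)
Your proposal is correct and follows essentially the same route as the paper: reduce to transversals of $\delta$-intersecting $r$-partite multi-$r$-graphs via \cref{proposition:frommultitocolour}, use $H_{r,t,m}$ together with \cref{lem:deltaintersect} and \cref{fact:coverHrtm} for $r\geq 3$, and handle $r=2$ and very small $\delta$ by separate, easier constructions. The paper deals with the stray $-1$ and the ``arbitrarily large'' requirement more bluntly---by running the whole construction with $\delta^2$ in place of $\delta$ and then blowing up each vertex of the resulting graph into a complete set---which sidesteps your limiting argument in $\eta$ and the boundary bookkeeping near $\delta=e^{-r/5}$, but the substance of the two arguments is the same.
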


\begin{proof}
First note that if $\delta \leq e^{-r/60}$ we have $\frac{r}{60 \log(1/\delta)} \leq 1$, in which case the statement is trivially true by the following simple construction. We take an $n$-vertex graph $G$ with $1/n \ll 1/r, \delta$ with $r$ distinguished vertices $v_1, \dots, v_r$ forming an independent set. We make each $v_i$ complete to $V(G) \setminus \{v_1, \dots, v_r\}$ in colour $i$, whereas $V(G) \setminus\{v_1, \dots, v_r\}$ is complete and coloured arbitrarily. We have $tc(G) = r$ and $\delta(G) \geq n - r \geq (1-\delta)n$. 

From now on, we will assume that $\delta \geq e^{-r/60}$. Our next step is to construct a $\delta^2$-intersecting $r$-partite $r$-graph with no cover of size $\frac{r^2}{60 \log(1/\delta)} \geq \frac{r}{120} \left\lceil \frac{r}{\log(1/\delta)} \right\rceil$. Let us start by showing that this can be done when $r = 2$, as this is handled differently from the case $r \geq 3$. Here we need to construct a bipartite ($2$-)graph, so let us take a matching on $m' := \lfloor (1 - \delta^2)^{-1}\rfloor$ edges. Since each edge intersects itself, the proportion of edges that each edge intersects is \(\frac{1}{m'} \geq 1- \delta^2, \) and so it is $\delta^2$-intersecting. Its transversal number is
\[m' = \left\lfloor \frac{1}{1 - \delta^2}\right\rfloor \geq \frac{1}{2(1-\delta^2)}.\]
However, using the fact that $e^{x/2} \leq 1 + x$ for all $x \in [0,1]$, we obtain 
\[\frac{1- \delta^2}{2} \leq \log(1 + 1- \delta^2) \leq \log \left(1 + \frac{1-\delta^2}{\delta^2} \right) = 2\log(1/\delta),\]
where the third inequality used $\delta \leq 1$. Thus, $m' \geq \frac{1}{8\log(1/\delta)} \geq \frac{r^2}{60\log(1/\delta)}$. This handles the case $r = 2$.

If $r \geq3$, we apply \cref{lem:deltaintersect} with $\delta^2$ in place of $\delta$ and $t= t(r)$ and $m = m(r,\delta^2)$ defined as in its statement. Note that $\delta \geq e^{-r/60} \geq e^{-r/5}$ and so the lemma applies. Then $H_{r,t,m}$ is $\delta^2$-intersecting. By \cref{fact:coverHrtm},
\[\tau(H_{r,t,m}) = (t+1) m \geq r/3 \cdot \left\lfloor \frac{r}{10\log(1/\delta)} \right\rfloor\geq \frac{r^2}{60\log(1/\delta)},\]
where in the last inequality we used $r/(10 \log(1/\delta)) \geq 1$ as $\delta \geq e^{-r/60}$. 

The previous paragraphs yield a $\delta^2$-intersecting $r$-partite $r$-graph $\ca H$ with $\tau(\ca H) \geq \frac{r^2}{60 \log(1/\delta)}$ for all choices of $r \geq 2$. By \cref{proposition:frommultitocolour}, this can be converted into a graph $G'$ with $\delta(G') \geq (1 - \delta^2)|V(G')| - 1$ and $tc(G') = \tau(\ca H)$.  Given any $n \in \mathbb{N}$ with $1/n \ll 1/\delta$, blow up each vertex of $G'$ to a set of size $n$. Rather than letting these $n$-sized sets be independent sets, we make them all complete and colour them arbitrarily, thereby obtaining an $r$-edge-coloured graph $G$ on $n \cdot |G'|$ vertices with
\[\delta(G) \geq n \cdot \left( \left(1 - \delta^2\right)|V(G')| - 1 \right) + (n-1) = \left( 1 - \delta^2 \right)|V(G)| - 1 \geq (1- \delta)|V(G)|.\]
It is easy to see that $tc(G) = tc(G') = \tau(\ca H)$, which finishes the proof. \end{proof}

\subsection{Upper bounds}\label{sec:upper_bound_tree}

In this section, we prove the two upper bounds in \cref{thm:tree_cover_main}. These are proven separately in two lemmas: \cref{lem:simpleUB} is a simple bound for when $\delta$ is large that is tight up to a constant factor, and \cref{lem:generalUB} is our general bound. 

\begin{lem}\label{lem:simpleUB}
    Let $r\geq 2$ and $\delta \in [1/2, 1)$. Any $r$-edge-coloured graph $G$ with $\delta(G) \geq (1 - \delta)|V(G)|$ satisfies
    \[tc(G) \leq \frac{2 r^2}{\log(1/\delta)}.\]
\end{lem}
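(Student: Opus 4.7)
The plan is to prove the slightly stronger bound $tc(G) \le r^2/(1-\delta)$ by a direct counting argument, and then translate it to the form stated by a short calculus check. Call a monochromatic component of $G$ \emph{large} if it contains at least $(1-\delta)n/r + 1$ vertices, where $n = |V(G)|$. The approach rests on two observations: every vertex of $G$ lies in some large monochromatic component, and the total number of large monochromatic components across all colours is at most $r^2/(1-\delta)$. Taking (a spanning tree of) each large monochromatic component then immediately gives the desired tree cover.

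For the first observation, fix $v \in V(G)$ and let $T_c^v$ denote the $c$-coloured component of $G$ containing $v$, for each $c \in [r]$. Since $T_c^v$ contains $v$ together with all its $c$-coloured neighbours, $|T_c^v| \ge \deg_c(v) + 1$; summing over $c$ yields
\[
\sum_{c \in [r]} |T_c^v| \;\ge\; \deg(v) + r \;\ge\; (1-\delta)n + r,
\]
so by averaging some $|T_c^v|$ is at least $(1-\delta)n/r + 1$, proving the first observation. For the second observation, fix a colour $c$: the $c$-components partition $V(G)$ and each large one uses strictly more than $(1-\delta)n/r$ vertices, giving at most $n / ((1-\delta)n/r) = r/(1-\delta)$ large $c$-components; summing over $r$ colours gives at most $r^2/(1-\delta)$ large monochromatic components in total. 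Combining these yields $tc(G) \le r^2/(1-\delta)$.

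To complete the proof I would verify that $r^2/(1-\delta) \le 2r^2/\log(1/\delta)$ throughout $[1/2, 1)$, which reduces to showing $\log(1/\delta) \le 2(1-\delta)$ on this range. Setting $f(\delta) := 2(1-\delta) - \log(1/\delta) = 2 - 2\delta + \log \delta$, one has $f(1) = 0$ and $f'(\delta) = 1/\delta - 2 \le 0$ on $[1/2, 1]$, so $f$ is non-increasing and hence $f(\delta) \ge f(1) = 0$. I do not expect any substantial obstacle here; the only conceptual subtlety worth mentioning is that iterative/greedy strategies (pick a central vertex, use its $r$ monochromatic components, and recurse on the uncovered set) fail to yield an $n$-independent bound in this regime, because after the first round the remaining set has size at most $\delta n$ and the minimum-degree hypothesis degenerates to nothing useful inside it. The direct counting via large monochromatic components sidesteps this issue entirely, which is why the argument qualifies as ``straightforward''.
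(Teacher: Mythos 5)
Your proof is correct and is essentially the paper's argument translated out of the hypergraph language: your ``large'' components are precisely the high-degree vertices of the connectivity hypergraph $\mathcal{C}(G)$ that the paper takes as its transversal $X$, your first observation is the paper's averaging step via $\delta$-intersection, and your second is its per-colour count $|V_i\cap X|\le r/(1-\delta)$. The final inequality $\log(1/\delta)\le 2(1-\delta)$ is also the same elementary estimate (the paper via $\log(1+2\alpha)\le 2\alpha$, you via a derivative), so there is nothing to add.
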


\begin{proof}
    Consider $G$'s connectivity hypergraph $\ca C(G)$ and define
    \[X := \left\{v \in V(\ca C(G)): \deg_{\ca C(G)}(v) \geq \frac{1-\delta}{r} e(\ca C(G)) \right\}.\]
    By \cref{fact:intersectingaux}, $\ca C(G)$ is $\delta$-intersecting. By averaging, each edge $e \in E(\ca C(G))$ contains some vertex $x \in X$. Therefore, $X$ is a transversal for $\ca C(G)$.  

    Recall that $\ca C(G)$ is $r$-partite. If $V_1, \dots, V_r$ are the $r$ parts of $\ca C(G)$, note that $|V_i \cap X| \leq \frac{r}{1 - \delta}$ using the degree of each vertex in $X$ and the fact that every edge in $\ca C(G)$ contains a unique vertex from $V_i$. Hence,
    \[|X| \leq r \cdot \frac{r}{1-\delta} \leq \frac{2 r^2}{\log(1/\delta)}.\]
    To see the last inequality, let $\alpha := 1 - \delta$ and note
    \[\log(1/\delta) = \log\left( \frac{1}{1-\alpha} \right) \leq \log \left(1 + 2\alpha \right) \leq 2\alpha = 2(1-\delta),\]
    where we use $\alpha \leq 1/2$ in the first inequality. In conclusion, $tc(G) = \tau(\ca C(G)) \leq \frac{2r^2}{\log(1/\delta)}$. \end{proof}

Our general upper bound is derived from the following. 

\begin{lem}[Corollary 5.5 in \cite{bucickorandisudakov}]\label{lem:UBtreecover} Let $e^r \geq k > r \geq 2$ and let $m = \frac{4r}{\log k}$. Let $\ca H$ be an $r$-graph in which any $k$ edges have a transversal of size at most $r$. Then $\tau(\ca H) \leq 4r m \log m$.     
\end{lem}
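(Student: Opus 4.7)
Since this lemma is stated verbatim as Corollary 5.5 of \cite{bucickorandisudakov}, the most efficient approach in the present paper is to invoke that reference directly. I will nevertheless sketch how I would attack it from scratch; the plan is to combine a fractional transversal bound with an iterative greedy covering.

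First, I would aim to show $\tau^{*}(\ca H) = O(m)$, where $m = 4r/\log k$. By LP duality applied to each $k$-edge subfamily $\ca F$, the hypothesis gives $\nu^{*}(\ca F) \leq \tau(\ca F) \leq r$, so $\sum_{e \in \ca F}y_e \leq r$ for every feasible fractional matching $y$ of $\ca H$ and every $\ca F \subseteq E(\ca H)$ with $|\ca F| = k$. A crude partition of $E(\ca H)$ into groups of $k$ edges only yields $\tau^{*}(\ca H) = O(r|E(\ca H)|/k)$, which scales with $|E(\ca H)|$ and is far too weak. To sharpen this to $O(r/\log k)$, I would sample random $k$-edge subfamilies according to the distribution proportional to $y$ itself, and exploit concentration for the hypergeometric distribution (\cref{lem:chernoff}); the $\log k$ saving should come from the tail exponent in the Chernoff bound, since a violation of the desired bound on $\sum_e y_e$ would force the sampled $k$-edge subfamily to have $\nu^{*} > r$ with positive probability.

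With $\tau^{*}(\ca H) \leq O(m)$ in hand, I would run a standard greedy argument. By an averaging argument there is always a vertex of degree at least $|E(\ca H)|/\tau^{*}(\ca H) \geq |E(\ca H)|/O(m)$; removing it (and the edges it covers) shrinks the edge count by a factor of $1 - 1/O(m)$. Since the hypothesis passes to arbitrary sub-hypergraphs, one iterates this step until the residual contains at most $k$ edges, at which point the hypothesis applied one last time produces $r$ additional transversal vertices covering the residual.

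The hard part will be the fractional bound in the first step: translating the local hypothesis (on $k$-edge subfamilies) into a global bound on $\tau^{*}$ independent of $|E(\ca H)|$ requires the full $\log k$ improvement, and a naive argument loses it entirely. A secondary difficulty is that a straightforward greedy iteration only gives $O(m \log(|E(\ca H)|/k))$ vertices, which superficially depends on $|E(\ca H)|$; removing this dependence in order to recover the $|E(\ca H)|$-independent bound $\tau(\ca H) \leq 4rm\log m$ will require a finer analysis, such as a complementary structural argument on the residual hypergraph or an adaptive refinement of the sampling step inside the greedy. Handling both of these issues constitutes the technical heart of Corollary 5.5 in \cite{bucickorandisudakov}, so for the present paper the cleanest option is simply to cite their proof and proceed to applications of \cref{lem:UBtreecover}.
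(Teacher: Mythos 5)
The paper gives no proof of this lemma: it is imported verbatim as Corollary 5.5 of \cite{bucickorandisudakov}, so your primary recommendation of simply citing that reference is exactly what the paper does. Your supplementary from-scratch sketch is not needed and, as you yourself acknowledge, leaves its two key steps (the $|E(\ca H)|$-independent fractional bound and the removal of the $\log(|E(\ca H)|/k)$ factor in the greedy) unestablished, but this does not affect the correctness of the citation-based approach.
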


\begin{lem}\label{lem:generalUB}
    Let $r \geq 2$ and $\delta \in (0, 1/2)$. Any $r$-edge-coloured graph $G$ with $\delta(G) \geq (1-\delta)|V(G)|$ satisfies 
    \[tc(G) \leq 100 r \cdot \left\lceil \frac{r}{\log(1/\delta)} \cdot \log\left( 1 + \frac{r}{\log(1/\delta)} \right) \right\rceil.\]
\end{lem}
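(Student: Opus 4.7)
The plan is to apply \cref{lem:UBtreecover} to the connectivity hypergraph $\ca H := \ca C(G)$: by \cref{fact:transversal,fact:intersectingaux}, $tc(G) = \tau(\ca H)$ and $\ca H$ is $\delta$-intersecting. Since the bound in \cref{lem:UBtreecover} improves as $k$ grows, the first task is to identify the largest $k$ for which every $k$ edges of $\ca H$ admit a transversal of size at most $r$.

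The key observation is as follows: given $k$ edges $e_1, \dots, e_k \in E(\ca H)$, pick $e^*$ uniformly at random from $E(\ca H)$. The $\delta$-intersecting property gives $\mathbb{P}[e^* \cap e_i = \emptyset] \le \delta$ for each $i$, so $\mathbb{E}[\#\{i : e^* \cap e_i = \emptyset\}] \le \delta k$. If $\delta k < 1$, some realisation of $e^*$ meets every $e_i$, and its $r$ vertices form the desired transversal. This motivates setting
\[
  k \;:=\; \min\bigl\{\lfloor 1/(2\delta)\rfloor,\; \lfloor e^r\rfloor\bigr\},
\]
which automatically satisfies $\delta k \le 1/2$ and $k \le e^r$.

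In the main range $\delta \le 1/(2r+2)$ we additionally have $k \ge r+1$, so \cref{lem:UBtreecover} applies and yields $\tau(\ca H) \le 4 r m \log m$ with $m = 4r/\log k$. Using $k \ge \min\{1/(4\delta), e^r/2\}$ one obtains $\log k \ge \min\{\log(1/\delta), r\} - O(1)$, hence $m = O(\max\{r/\log(1/\delta), 1\})$. A short case split on whether $r/\log(1/\delta) \ge 1$ or not, together with the elementary inequality $\log(4x) \le 2\log(1+x)$ for $x \ge 1$, converts this estimate into the stated bound of $100 r \lceil (r/\log(1/\delta))\log(1 + r/\log(1/\delta))\rceil$.

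The complementary range $\delta > 1/(2r+2)$ must be handled separately, as then $\lfloor 1/(2\delta)\rfloor \le r$ and \cref{lem:UBtreecover} no longer applies. Here I would reuse the proof of \cref{lem:simpleUB}: averaging via $\delta$-intersectingness shows that $X := \{v \in V(\ca H) : \deg_{\ca H}(v) \ge ((1-\delta)/r)\, e(\ca H)\}$ is a transversal, while $r$-partiteness forces $|X \cap V_c| \le r/(1-\delta)$ in each colour class $V_c$, giving $\tau(\ca H) \le r^2/(1-\delta) \le 2r^2$ (using $\delta < 1/2$). Since $\log(1/\delta) \le \log(2r+2)$ throughout this range, a direct check confirms $2r^2$ lies comfortably below the target. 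The only real obstacle is the bookkeeping of constants; the conceptual content reduces to the random-edge argument above combined with the reduction provided by \cref{lem:UBtreecover}.
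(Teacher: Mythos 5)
Your proposal is correct and follows essentially the same route as the paper: reduce to the connectivity hypergraph via \cref{fact:transversal} and \cref{fact:intersectingaux}, observe that any $k$ edges with $k\delta<1$ are all met by some single edge (so have a transversal of size $r$), apply \cref{lem:UBtreecover} with $k$ capped at $\lfloor e^r\rfloor$, and dispose of large $\delta$ by the averaging argument of \cref{lem:simpleUB}. The one place needing care is the constant-chasing near your case boundary $\delta=1/(2r+2)$: the additive estimate $\log k\geq \log(1/\delta)-O(1)$ is too lossy there for small $r$ (e.g.\ $r=2$, $\delta=1/6$ gives $\log(1/(4\delta))=\log 1.5$, and the resulting $4rm\log m$ exceeds the target $200$), which the paper sidesteps by splitting instead at $\delta=r^{-2}$ and using the multiplicative bound $k\geq \delta^{-1/2}$, i.e.\ $\log k\geq\tfrac12\log(1/\delta)$.
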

\begin{proof}
First suppose that $\delta \geq r^{-2}$. Then we have
\[\frac{r}{\log(1/\delta)} \cdot \log\left(1 +  \frac{r}{\log(1/\delta)}\right) \geq \frac{r}{2\log r} \cdot \log\left(\frac{r}{2 \log r}\right) \geq \frac{r}{2\log r} \cdot \log(r^{0.1}) \geq \frac{r}{20},\]
where we used the fact that $r^{0.9}/(2\log r) \geq 1$ for $r \geq 2$. So, in this case the statement of the lemma is true by \cref{lem:simpleUB} applied with $r, 1/2, G$ playing the roles of $r, \delta, G$. Indeed, we have $\delta(G) \geq |V(G)|/2$ and so the lemma application yields a tree cover with $2r^2/\log(2) \leq 5r^2$ monochromatic trees.

Now let us assume that $\delta \in (0, r^{-2}]$. By \cref{fact:intersectingaux}, $\ca C(G)$ is $\delta$-intersecting. Let $\ca C'$ be the `flattened' $r$-graph obtained from $\ca C(G)$ by retaining only one copy of each of its edges. Let $k := \lceil 1/\delta - 1\rceil$, so that $k \geq r^2 - 1 > r$ and $k < 1/\delta$. Observe that, for any choice of $e_1, \dots, e_k \in E(\ca C')$, the number of edges of $\ca C(G)$ that intersect every $e_i$ is at least
\[(1 - k\delta)e(\ca C(G)) > 0.\]
 Since $e_1, \dots, e_k$ were arbitrary, this shows that any $k$ edges of $\ca C'$ have a transversal of size at most $r$. Let $k' := \min\{k, \lfloor e^r\rfloor \}$. By \cref{lem:UBtreecover} applied with $r, k', \ca C'$ playing the roles of $r, k, \ca H$, we conclude that
\begin{equation}\label{eq:transv}\tau( \ca C(G)) = \tau(\ca C') \leq \frac{16r^2}{\log k'} \cdot \log\left(\frac{4r}{\log k'} \right) \leq \max\left\{100r, \frac{16r^2}{\log k} \cdot \log\left(\frac{4r}{\log k} \right)\right\}.\end{equation}
But we have $k \geq 1/(2\delta) \geq \delta^{-1/2}$ since $\delta \leq r^{-2} \leq 1/4$, and so
\[\frac{16r^2}{\log k} \cdot \log\left( \frac{4r}{\log k}  \right) \leq \frac{32r^2}{\log(1/\delta)} \cdot \log\left(\frac{8r}{\log(1/\delta)}\right).\]
We also have 
\[100r \leq 100r \cdot \left\lfloor \frac{r}{\log(1/\delta)} \cdot \log\left(1+ \frac{r}{\log(1/\delta)} \right) \right\rfloor,\] as the term inside the floor is positive. This shows that the RHS in \eqref{eq:transv} is bounded above by the term from the lemma statement, which completes the proof since $tc(G) = \tau(\ca C(G))$ by \cref{fact:transversal}.
\end{proof}

\begin{proof}[Proof of \cref{thm:tree_cover_main}]
    The lower bound follows immediately from \cref{lem:lower_bound_tree}, whereas the upper bound follows by applying \cref{lem:generalUB} for the case $\delta \in (0,1/2)$ and \cref{lem:simpleUB} for $\delta \in [1/2, 1)$. 
\end{proof}

\section{Absorption}\label{sec:absorption}

In this section, we prove our main absorbing lemma, which will play a central role in our main proof by allowing us to cover a small leftover set of vertices with few monochromatic cycles. Roughly speaking, it says that in any edge-coloured bipartite graph $G[A,B]$ where $A$ is small relative to $B$, and every vertex of $A$ has many neighbours in $B$, one can cover $A$ with only a few monochromatic cycles.

\begin{lem}\label{lem:absorption}
    Let $1/n \ll 1/r, 1/K$ and $1/K \ll 1$, and let $\delta \in [r^{-7r}, 1/24]$. Let $G$ be an $r$-edge-coloured bipartite graph on vertex classes $A $ and $B$ with $|B| = n$ and $|A| \leq \frac{n}{r^{40r}}$. Suppose that $\deg(v, B) \geq (1-\delta)n$ for each $v \in A$. Then $G$ contains a collection of at most \[Kr \left\lceil\frac{r \log r}{\log(1/\delta)} \right\rceil\]
    vertex-disjoint monochromatic cycles covering $A$.
\end{lem}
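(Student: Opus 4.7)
The plan is to reduce the cycle-covering problem to a hypergraph transversal problem via the notion of \emph{connecting hubs} in $B$, and then apply the tree-cover bound of \cref{thm:tree_cover_main} to a suitable auxiliary structure. A connecting hub of colour $c$ is a small set $H \subseteq B$ such that many $A$-vertices are $c$-adjacent to most of $H$; the $A$-vertices attached to such a hub can be threaded into a single monochromatic cycle of colour $c$, using the vertices of $H$ as bridges between consecutive $A$-vertices.

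First, I would establish abundance of connecting hubs. Using the minimum degree condition $\deg(v,B) \ge (1-\delta)n$ together with $|A| \le n/r^{40r}$, a standard application of \cref{lem:chernoff} shows that a random subset $H \subseteq B$ of moderate size is, with high probability, a connecting hub in colour $c$ for all but a $\delta^{\Omega(1)}$-fraction of $v \in A$. Iterating this on disjoint copies produces, in each colour, a large family of pairwise vertex-disjoint hubs while using only a small fraction of $B$.

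Second, I would build an auxiliary $r$-edge-coloured graph $G^{\ast}$ on vertex set $A$, placing a $c$-coloured edge between two $A$-vertices whenever they are both compatible with a common hub in colour $c$. By the hub-abundance from the previous step, any two $A$-vertices are compatible with most hubs in each colour, which forces the connectivity hypergraph $\mathcal{C}(G^{\ast})$ (in the sense of \cref{sec:hypergraph_connection}) to be $\delta'$-intersecting for some $\delta' \le \delta^{\Omega(1)}$. Applying \cref{lem:generalUB} yields a transversal of $\mathcal{C}(G^{\ast})$ of size $O(r\lceil r\log r/\log(1/\delta)\rceil)$; each element of this transversal corresponds to a monochromatic component of $G^{\ast}$, contained in a (colour, hub) pair. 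I would then assign each $v \in A$ to one such component, and for each component realise its assigned $A$-vertices together with fresh hub vertices as a single monochromatic cycle of $G$.

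The main obstacle is the cycle realisation step, which is substantially more delicate than in the tree-covering setting. Even after partitioning $A$ into groups sharing a common hub, stitching each group into a single monochromatic cycle (rather than merely a tree) requires the hubs to be oversized relative to the groups they absorb, so that enough distinct bridges are available; this oversizing, combined with the fact that group sizes can vary widely, is responsible for the extra $\log r$ factor in the cycle bound compared to the raw tree-cover bound of \cref{thm:tree_cover_main}. A secondary technical difficulty is maintaining vertex-disjointness across cycles, which requires careful bookkeeping of hub reservations and of $B$-vertices used as bridges; the slack $|A| \ll |B|$ and the large disjoint hub families from Step 1 are essential for making this accounting go through.
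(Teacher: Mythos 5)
Your high-level plan (hubs in $B$, an auxiliary graph, the tree-cover bound, then stitching into cycles) matches the paper's strategy in outline, but two of your steps have genuine gaps.

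First, the reduction to tree covering does not work as you set it up. You put the auxiliary graph $G^{\ast}$ on vertex set $A$ and claim its connectivity hypergraph is $\delta^{\Omega(1)}$-intersecting because ``any two $A$-vertices are compatible with most hubs in each colour.'' This is false: the hypothesis only controls the \emph{total} degree $\deg(v,B)\geq(1-\delta)n$, so a vertex of $A$ may use a single colour exclusively, and two vertices of $A$ with disjoint colour palettes share no common-colour hub at all. If half of $A$ sees $B$ only in colour $1$ and the other half only in colour $2$, every vertex of $G^{\ast}$ has degree at most $|A|/2$, so by \cref{fact:intersectingaux} you only get a $\tfrac12$-intersecting hypergraph and hence an $O(r^2)$ bound from \cref{lem:simpleUB}, far from the target. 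The paper avoids this by building the simplified graph $R(X,\mathcal F)$ on $X\cup\bigcup_i S_i$ — keeping the hub cores as vertices and adding an artificial colour $r+1$ on their union — so that the high minimum degree of $A$ into $B$ is what drives the intersecting property (\cref{lem:mindeg}), rather than codegrees within $A$. Relatedly, a monochromatic component of the auxiliary structure generally spans many hubs, so realising it as one cycle needs robust connectivity \emph{between} hubs ($c$-links) and \emph{inside} hubs; the paper gets the latter from routing sets in $A$ whose contracted graphs are sublinear expanders (\ref{connhub:1}, \cref{lem:connecting_lemma}), which your simpler hub notion does not provide.

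Second, you leave a $\delta^{\Omega(1)}$-fraction of $A$ uncovered after the hub step and never explain how to finish. That fraction is still of unbounded size (up to $\delta^{\Omega(1)}n/r^{40r}$), and covering it is not easier than the original problem, so the argument does not terminate. The paper is careful to make the leftover at most $r^{O(r)}$ in \emph{absolute} terms (\cref{lem:allbutrOr}) and then covers it by a separate greedy argument (\cref{lem:bounded_leftover}) that uses $r\lceil 3Kr\log r/\log(1/\delta)\rceil$ further cycles; this last step, not hub oversizing, is where the $\log r$ factor in the statement comes from. Your diagnosis of the source of the $\log r$ factor, and your treatment of the leftover, both need to be replaced.
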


As the proof of this lemma introduces several novel ideas, we now provide a brief sketch. Suppose we are given an $r$-edge-coloured bipartite graph $G[A,B]$ as in the statement. At a high level, the proof strategy is to reduce the problem from monochromatic cycle partitioning to tree covering, for which we can use the upper bound from \cref{thm:tree_cover_main}. To do so, we start with a cleaning procedure on $G$ ensuring that each of its monochromatic components, say $T$, satisfies the following two conditions.
\begin{enumerate}[label = (\alph*)]
    \item\label{prop:fewcycles} $T$ contains a small collection of monochromatic cycles $C_1, \dots, C_t$ covering almost all of the vertices of $V(T) \cap A$. 
    \item\label{prop:robust_conn} Any two vertices $v, u \in V(T)$ are connected by many paths that are internally vertex-disjoint from $V(C_i)$, $i \in [t]$. 
\end{enumerate}

If these two properties hold with suitable parameters, then \ref{prop:robust_conn} allows us to `stitch together' the cycles $C_1, \dots, C_t$ from \ref{prop:fewcycles} into a single monochromatic cycle covering almost all of $V(T) \cap A$. 

Following this approach, a cover of $A$ with few monochromatic trees can be transformed into an almost-cover of $A$ by monochromatic cycles. This still falls slightly short of \cref{lem:absorption}, however, which demands that the cycles be vertex-disjoint and cover all of $A$. While the first requirement can be guaranteed through a slight modification of the argument for \ref{prop:fewcycles}, the second presents a delicate issue. Indeed, even if the leftover is small relative to $A$, it is far from clear that covering it should be any easier than our original problem unless it is \emph{extremely} small.

To address this issue, our proof will need to be exceptionally efficient in controlling the leftover of the almost-cover, specifically ensuring that it contains not just a bounded number of vertices as a function of $r$, but in fact no more than $r^{O(r)}$ vertices. This is essentially the largest number of vertices that can be covered by a simple greedy argument with the prescribed number of cycles (see \cref{lem:bounded_leftover} below).

Let us outline how we attempt to prove \ref{prop:fewcycles} and \ref{prop:robust_conn} in order to achieve such a small leftover. Condition \ref{prop:robust_conn} is the main obstacle; by contrast, \ref{prop:fewcycles} is relatively straightforward, since it is generally easy to find a bounded collection of cycles covering $A$ when each component has not-too-small minimum degree (which can be guaranteed by deleting a small number of edges). To establish \ref{prop:robust_conn}, we introduce certain structures that we call \define{connecting hubs} (see \cref{defn:connecting_hub} below). Roughly speaking, a connecting hub $\mathcal H$ consists of a large set $S \subseteq B$ and a set of colours $C \subseteq [r]$ satisfying the following properties:
\begin{enumerate}[label = (\roman*)]
    \item\label{prop:robust_connect} for each $c \in C$, any two vertices of $S$ are connected by many internally vertex-disjoint $c$-coloured paths; and  
    \item\label{prop:colours} every vertex $v \in A$ either sends few edges to $S$ or many of these edges are $C$-coloured.
\end{enumerate}

It is not difficult to show that such a structure exists in $G[A,B]$. Iterating this, we obtain a family of vertex-disjoint hubs $\mathcal H_1, \dots, \mathcal H_\ell$ that together cover almost all of $B$, where each $\mathcal H_i$ consists of a set $S_i \subseteq B$ together with a colour set $C_i \subseteq [r]$. By discarding a few additional edges and vertices, we can further ensure that  
\begin{itemize}
    \item each $S_i$ is incident only with $C_i$-coloured edges, and for each $v \in A$ and $c \in [r]$, the degree $\deg_c(v, S_i)$ is either $0$ or large; and  
    \item for each $i,j \in [\ell]$ and $c \in [r]$, there are either no or many so-called \define{$c$-links} between $\mathcal H_i$ and $\mathcal H_j$: that is, vertices in $A$ having many $c$-coloured neighbours in both $S_i$ and $S_j$.
\end{itemize}

After these preparations, if two vertices $u,v \in A$ are connected by a $c$-coloured path in $G$, it becomes easy to find many internally vertex-disjoint $c$-coloured $u$--$v$ paths, as required by \ref{prop:robust_conn}. Indeed, any such path necessarily passes through a sequence of hubs $\mathcal H_{i_1}, \dots, \mathcal H_{i_\ell}$, where consecutive hubs admit many $c$-links. Many alternative paths can therefore be constructed by varying the $c$-link used between successive hubs and by choosing different internal paths within each hub (using property \ref{prop:robust_connect}).  

This is essentially enough to carry out our entire strategy. The hubs can only be guaranteed to satisfy $|S_i| \ge n / r^{\ca O(r)}$, which means that, for technical reasons, for condition \ref{prop:fewcycles} we cannot do better than $r^{\ca O(r)}$ cycles. Fortunately, the monochromatic connectivity we can guarantee inside the hubs (i.e. property \ref{prop:robust_connect}) is of order $r^{\Omega(r)}$ vertex-disjoint paths, which provides enough room to stitch these cycles into one. All of this comes at the cost of $r^{\ca O(r)}$ discarded vertices of $A$, which we handle separately, as noted earlier.

The remainder of the section is structured as follows. Connecting hubs are introduced in \cref{subsec:conn_hubs}, where we also show how to find a single one. In \cref{subsec:linked_families}, we construct families of hubs with the properties we need, which we refer to as \define{linked hub families}. \cref{subsec:covering_almost} shows how to cover all but $r^{O(r)}$ vertices with monochromatic cycles, whereas \cref{subsec:covering_few} shows how to cover the rest and proves \cref{lem:absorption}.

\subsection{Connecting hubs}\label{subsec:conn_hubs}

In order to formally introduce connecting hubs, we first require the following definition.

\begin{defn}[contracted graph]
Let $k \geq 1$ and let $G$ be a bipartite graph on vertex classes $A$ and $B$. The \define{contracted graph} $\tilde{G}(A, B, k)$ is defined as the graph on vertex set $A$ and edge set consisting of all pairs of distinct vertices $u, v \in A$ such that \(\codeg_G(u, v) \geq k\).
\end{defn}

We record, for later use, the following simple fact about contracted graphs.

\begin{fact}\label{fact:small_alpha}
    Let $k \geq 1$. Let $G$ be a bipartite graph on vertex classes $A$ and $B$ with $\deg(v, B) \geq \frac{|B|}{k}$ for each $v \in A$. Then $\tilde{G}(A, B, \frac{|B|}{5k^2})$ has no independent set of size $2k$.
\end{fact}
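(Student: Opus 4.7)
The plan is to proceed by contradiction via a straightforward double-counting argument across $(A,B)$. Suppose there is an independent set $I \subseteq A$ of size $2k$ in $\tilde{G}(A,B,|B|/(5k^2))$, meaning every distinct pair $u,v \in I$ satisfies $\codeg_G(u,v) < |B|/(5k^2)$. For each $b \in B$, define $d_b := \deg_G(b, I) = |N_G(b) \cap I|$.

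First I would use the degree hypothesis on $A$ to lower-bound the sum of the $d_b$:
\[
\sum_{b \in B} d_b \;=\; \sum_{v \in I}\deg_G(v, B) \;\geq\; 2k \cdot \frac{|B|}{k} \;=\; 2|B|,
\]
so the mean of the $d_b$ over $b \in B$ is at least $2$. Next, I would use the codegree assumption to upper-bound the corresponding sum of pair-counts:
\[
\sum_{b \in B} \binom{d_b}{2} \;=\; \sum_{\{u,v\} \subseteq I}\codeg_G(u,v) \;<\; \binom{2k}{2}\cdot \frac{|B|}{5k^2} \;=\; \frac{(2k-1)|B|}{10k} \;<\; \frac{2|B|}{5}.
\]
The contradiction then comes from convexity of $x \mapsto \binom{x}{2} = x(x-1)/2$: Jensen's inequality (or equivalently Cauchy--Schwarz applied to $\sum_b d_b^2$, followed by subtracting $\sum_b d_b$) yields
\[
\sum_{b \in B} \binom{d_b}{2} \;\geq\; |B|\binom{2}{2} \;=\; |B|,
\]
which is incompatible with the upper bound of $2|B|/5$ above.

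I do not anticipate any real obstacle: the entire argument is a one-shot convexity computation, and the numerical slack is comfortable (the constant $5$ in the threshold $|B|/(5k^2)$ is precisely what is needed to make the codegree sum fall below $|B|$). The only minor care point is applying Jensen to an integer-valued quantity, which is fine since $x(x-1)/2$ has second derivative $1$ on all of $\mathbb{R}$, so the inequality $\mathbb{E}[\binom{d_b}{2}] \geq \binom{\mathbb{E}[d_b]}{2}$ holds without integrality issues.
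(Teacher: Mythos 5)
Your argument is correct, but it derives the contradiction differently from the paper. Both proofs use the same two estimates -- the degree sum $\sum_{v\in I}\deg(v,B)\ge 2|B|$ and the pairwise codegree bound $\codeg(u,v)<|B|/(5k^2)$ -- but the paper combines them via inclusion--exclusion (Bonferroni): it lower-bounds $|N_G(I)|$ by $\sum_v \deg(v,B)-\sum_{uv}\codeg(u,v) \ge 2|B|-\tfrac{4}{5}|B|>|B|$, contradicting $N_G(I)\subseteq B$. You instead run the standard K\H{o}v\'ari--S\'os--Tur\'an-style second-moment count: Jensen forces $\sum_b\binom{d_b}{2}\ge|B|$ while the codegree hypothesis caps it below $2|B|/5$. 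The two routes are equally elementary and give the same numerical slack; the paper's is marginally more direct (no convexity needed), while yours is the more standard template for codegree arguments. One small arithmetic slip: $\binom{2k}{2}\cdot\frac{|B|}{5k^2}=k(2k-1)\cdot\frac{|B|}{5k^2}=\frac{(2k-1)|B|}{5k}$, not $\frac{(2k-1)|B|}{10k}$; this does not matter, since $\frac{(2k-1)|B|}{5k}<\frac{2|B|}{5}$ still holds and the contradiction with the lower bound $|B|$ goes through unchanged.
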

\begin{proof}
    Suppose to the contrary that it contains an independent set $Y$ of size $2k$. Then, by the inclusion-exclusion principle,
    \begin{equation*}\begin{split}|N_{G}(Y)| &\geq \sum_{v \in Y} \deg_G(v, B) - \sum_{vu \in \binom{Y}{2}}\codeg_G(v,u, B) \\ &\geq \frac{|Y| |B|}{k} - \binom{|Y|}{2} \frac{|B|}{5k^2} \\&\geq 2|B| - 4k^2\cdot \frac{|B|}{5k^2} > |B|, \end{split}\end{equation*}
    which contradicts the fact that $N_G(Y) \subseteq B$. 
\end{proof}

Now we have the main definition of this section. Note that one of its conditions requires the notion of an $(\eps,t)$-expander, as in \cref{defn:sublinear_expander}. 

\begin{defn}[connecting hub]\label{defn:connecting_hub}
Let $r, t, m\geq 1$ and $\eps > 0$. Let $G$ be an $n$-vertex $r$-edge-coloured bipartite graph on vertex classes $A$ and $B$ and let $X \subseteq A$. An \define{$(r,t,m, \eps)$-connecting hub} for $X$ is a triple $(C, S, \ca T)$ where
\begin{itemize}
    \item $C \subseteq [r]$ is the \define{colour set} of the hub,
    \item $S \subseteq B$ is a set of size $m$ referred to as its \define{core}, and
    \item $\ca T = \{T_c: c \in C\}$ and all the $T_c$ are disjoint subsets of $A \setminus X$ with $t \leq |T_c| \leq 80rt$, referred to as its \define{routing sets}.
\end{itemize}
Moreover, $(C, S, \ca T)$ is required to satisfy the following properties. 
\begin{enumerate}[label = (H\arabic*)]
    \item\label{connhub:1} For each $c \in C$, the contracted graph $\tilde{G_c}(T_c, B, m)$ is an $(\eps, 1)$-expander;
    \item\label{connhub:2} For each $v \in S$ and $c \in C$, $\deg_c(v, T_c) \geq \frac{|T_c|}{20r}$;
    \item\label{connhub:3} For each $v \in X$, either $\deg(v, S) \leq |S|/2$ or there is some $c \in C$ such that $\deg_c(v, S) \geq \frac{|S|}{10r}$. 
\end{enumerate}

\end{defn}

See \cref{fig:connectinghub} for a visual depiction of this notion. As a simple example, note that a set $S$ of size $m$ with distinct vertices $v_1, \dots, v_k \in A$ and colours $C = \{c_1, \dots, c_k\}$ such that $S \subseteq N_{c_i}(v_i)$ for all $i \in [k]$ constitutes an $(r,1,m, \eps)$-connecting hub if \ref{connhub:3} is satisfied (here we take $T_{c_i} = \{v_i\}$). Indeed, since each $T_{c_i}$ is just a singleton set, \ref{connhub:1} holds trivially. This structure was originally studied by \cite{haxellkohayakawa} for monochromatic tree partitioning in complete graphs, and served as inspiration for the definition above.

\begin{figure}
    \centering
    \includegraphics[width=0.5\linewidth]{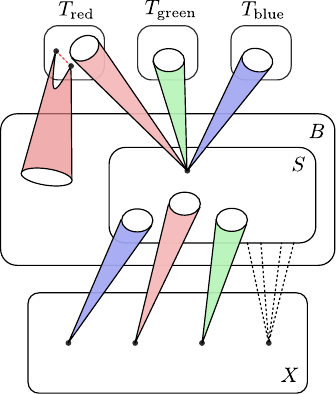}
    \caption{A connecting hub with colour set $C = \{\textrm{red}, \textrm{green}, \textrm{blue}\}$. For each $c \in C$, every vertex in $S$ sends many $c$-coloured edges to $T_c$. Moreover, every vertex in $X$ either sends many $c$-coloured edges to $T_c$ for \emph{some} $c \in C$, or it sends few edges to $S$. Finally, the dashed red edge in $T_\textrm{red}$ represents an edge of $\tilde{G}_\textrm{red}(T_\textrm{red}, B,m )$, which guarantees a large red coneighbourhood in $B$.}  
    \label{fig:connectinghub}
\end{figure}

\begin{rmk}
    The definition of a connecting hub allows $C =\emptyset$, in which case the hub exhibits no useful connectivity properties. If this happens, however, note that every $v \in X$ must satisfy $\deg(v, S) \leq |S|/2$ by \ref{connhub:3}. In applications, we will consider a graph with high minimum degree and find many hubs inside of it. Though some of these may satisfy $C = \emptyset$, the minimum degree ensures that most of them do not (which is ultimately necessary for the proof). Including this degenerate case in the definition allows us to show that connecting hubs can be found in arbitrary edge-coloured graphs.
\end{rmk}

Now we show how to find, in an edge-coloured bipartite graph $G[A,B]$, a connecting hub for a set $A' \subseteq A$ covering most of $A$.

\begin{lem}\label{lem:connectinghub} Let $t, r \geq 2$, let $\eps \ll 1$ and $1/n \ll 1/r$. Let $m :=  \lfloor r^{-7r}n \rfloor$. Let $G$ be an $r$-edge-coloured bipartite graph on vertex classes $A$ and $B$ with $|A| \leq |B| = n$. Then $G$ contains an $(r, t, m,  \eps)$-connecting hub for some set $A' \subseteq A$ of size $|A'| \geq |A| - 240r^2t$. 
\end{lem}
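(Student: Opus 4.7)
The plan is to build the hub $(C, S, \mathcal T)$ in three stages: first choose the colour set $C$ from the degree structure of $G$, then extract the expander routing sets $T_c$, and finally iteratively construct the core $S$ of size $m$. For the first stage, for each $c \in [r]$ I would define $A_c^* := \{v \in A : \deg_c(v, B) \geq n/(3r)\}$ and set $C := \{c \in [r] : |A_c^*| \geq 160 rt\}$. By pigeonhole, every $v$ with $\deg(v, B) \geq n/2$ lies in some $A_c^*$, so the "heavy but excluded" vertices in $\bigcup_{c \notin C} A_c^*$ total at most $160 r^2 t$ and are placed into $A \setminus A'$ alongside $\bigcup_{c \in C} T_c$ (of size at most $80 r^2 t$). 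This yields $|A \setminus A'| \leq 240 r^2 t$, matching the budget.

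For the second stage, every $v \in A_c^*$ satisfies $\deg_c(v, B) \geq |B|/(3r)$, so \cref{fact:small_alpha} with $k = 3r$ gives that $\tilde{G_c}(A_c^*, B, n/(45 r^2))$ has no independent set of size $6r$. Since $r \geq 2$ implies $45 r^2 \leq r^{7r}$ and hence $m \leq n/(45 r^2)$, the same bound transfers to $\tilde{G_c}(A_c^*, B, m)$; Turán then gives average degree at least $|A_c^*|/(7r)$. I would pick a random subset of $A_c^*$ of size $80 rt$, use \cref{lem:chernoff} to verify that the induced contracted graph retains $\Omega(t)$ average degree, and apply \cref{thm:sublinearexpander} to extract an $(\eps, 1)$-expander $T_c$ of size between $t$ and $80 rt$, which gives \ref{connhub:1}.

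For the third stage, I would enumerate $C = \{c_1, \dots, c_{|C|}\}$, set $S_0 := B$, and iteratively shrink $S_i := \{u \in S_{i-1} : \deg_{c_i}(u, T_{c_i}) \geq |T_{c_i}|/(20 r)\}$. A double-counting argument exploiting $\deg_{c_i}(v, B) \geq n/(3 r)$ for $v \in T_{c_i}$ should give $|S_i| \geq |S_{i-1}|/(4r)$, so after $|C| \leq r$ iterations $|S_{|C|}| \geq n (4r)^{-r} \geq r^{-7r} n = m$, using $(4r)^r \leq r^{7r}$ for $r \geq 2$. Any size-$m$ subset $S \subseteq S_{|C|}$ then satisfies \ref{connhub:2} by construction. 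For \ref{connhub:3}, a vertex $v \in A'$ either lies outside $\{v : \deg(v, B) \geq n/2\}$, in which case I would argue $\deg(v, S) \leq m/2$, or lies in $A_c^*$ for some $c \in C$, in which case $\deg_c(v, B) \geq n/(3r)$ should yield $\deg_c(v, S) \geq m/(10 r)$.

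The hard part will be making the iterative construction in the third stage rigorously preserve the degree-transfer guarantees needed for both the shrinkage lower bound and for \ref{connhub:3}: since $S_i$ is constructed deterministically, it is not immediately "random enough" to transfer $B$-degrees to $S_i$-degrees via \cref{lem:chernoff}. I would address this either by initializing $S_0$ as a large random subset of $B$ (so that the initial concentration holds and hopefully persists through the iteration), or by interleaving the colour selection with an \ref{connhub:3}-check — adding a new colour to $C$ on demand whenever a nontrivial set of vertices would otherwise violate \ref{connhub:3}, which terminates in at most $r$ steps since after all colours are included every high-degree vertex in $S^{(k)}$ is automatically covered by pigeonhole. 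A secondary technical point is balancing the upper bound $|T_c| \leq 80 rt$ against the size of the expander produced by \cref{thm:sublinearexpander}, which requires choosing the random subset of $A_c^*$ at an appropriate size before extracting the expander.
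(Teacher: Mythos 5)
There is a genuine gap, and it sits exactly where you flagged ``the hard part'': the order in which you build the object does not support the degree claims you need. You fix the colour set $C$ and the routing sets $T_c$ first, using degrees into all of $B$, and only then try to shrink $B$ down to the core. But in the iterative shrinkage $S_i := \{u \in S_{i-1} : \deg_{c_i}(u,T_{c_i}) \ge |T_{c_i}|/(20r)\}$, the double count needs the vertices of $T_{c_i}$ to send many $c_i$-edges into $S_{i-1}$, whereas all you know is $\deg_{c_i}(v,B) \ge n/(3r)$; since $|B\setminus S_{i-1}|$ can already exceed $n/(3r)$ after one iteration, the vertices of $T_{c_i}$ may send \emph{no} edges into $S_{i-1}$, and the claimed bound $|S_i|\ge |S_{i-1}|/(4r)$ fails. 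A second, independent gap is in \ref{connhub:3}: a vertex $v$ with $\deg(v,B) < n/2$ (hence outside every $A_c^*$, hence contributing no colour to $C$) can still satisfy $\deg(v,S)=|S|$, because $S$ has size only $m = \lfloor r^{-7r}n\rfloor$; its dominant colour inside $S$ then need not lie in $C$, so the dichotomy in \ref{connhub:3} is simply not implied by your choice of $C$. Your first proposed repair (making $S_0$ random) does not help with either problem, since both are about which colours and routing sets were committed to before the core was known.

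Your second proposed repair --- adding colours to $C$ on demand whenever \ref{connhub:3} would fail for the current core --- is in essence the paper's entire proof, not a technical afterthought. The paper takes a \emph{maximal} $C$ together with a nested core $S'$ of size at least $n/(20r)^{|C|}$ and routing sets whose degree and codegree conditions are all measured relative to the \emph{current} $S'$; the key claim is that if too many vertices of $A$ violate the \ref{connhub:3}-type dichotomy for $S'$, then some colour $c_0\notin C$ admits $80rt$ vertices with $\deg_{c_0}(\cdot,S')\ge |S'|/(8r)$, from which one extracts (via \cref{fact:small_alpha}, Tur\'an and \cref{thm:sublinearexpander}, much as in your stage two, but with $S'$ in place of $B$) a new routing set $T_{c_0}$ and a new core $S''\subseteq S'$ with $|S''|\ge |S'|/(20r)$, contradicting maximality. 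Only then is $S$ taken as a uniformly random $m$-subset of $S'$ and \cref{lem:chernoff} used to transfer the dichotomy from $S'$ to $S$. So your ingredients are right, but the maximality/contradiction mechanism that simultaneously produces the new $T_{c_0}$ satisfying \ref{connhub:1} and the shrunk core satisfying \ref{connhub:2} is the substance of the proof and is missing from your write-up.
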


\begin{proof}
    Let $C$ be a maximal subset of $[r]$ such that the following holds. There exist sets $S' \subseteq B$ and $\mathcal{T}= \{T_c: c\in C\}$ with $T_c \subseteq A$ for each $c \in C$ satisfying
    \begin{enumerate}[label = (\roman*)]
        \item\label{prop:Tc_disj} all the $T_c$ are pairwise disjoint and satisfy $t \leq |T_c| \leq 80rt$, 
        \item\label{prop:S_large} letting $\ell := |C|$, we have $|S'| \geq \frac{n}{(20r)^{\ell}}$,
        \item\label{prop:H1} for each $c \in C$, the contracted graph $\tilde{G}_c(T_c, B, m)$ is an $(\eps, 1)$-expander, and
        \item\label{prop:H2} for each $v \in S'$ and $c \in C$, $\deg_c(v, T_c) \geq \frac{|T_c|}{20r}$.
    \end{enumerate}

    Note that $C$ is well-defined since conditions \ref{prop:Tc_disj}--\ref{prop:H2} are satisfied by setting $C := \emptyset, S' = B$ and $\ca T = \emptyset$ (all the properties regarding the $T_c$ are vacuously satisfied). 

    \begin{claim}\label{claim:allbutfew}
        All but at most $160r^2t$ vertices $v \in A$ satisfy: either $\deg(v, S') \leq |S'|/4$ or there is some $c \in C$ such that $\deg_c(v, S') \geq \frac{|S'|}{8r}$.  
    \end{claim}

    \begin{proof}[Proof of claim.]
    Suppose to the contrary that there are at least $160r^2t$ of vertices $v \in A$ with $\deg(v, S') \geq |S'|/4$ and, for each $c  \in C$, $\deg_c(v, S') \leq \frac{|S'|}{8r}$. Note that by \ref{prop:Tc_disj} at least $80r^2t$ of these vertices are not contained in any $T_c$. Each of these vertices satisfies
    \[\deg_{[r]\setminus C}(v, S') = \deg(v, S') - \deg_{C}(v, S') \geq \frac{|S'|}{4} - \frac{|C||S'|}{8r} \geq \frac{|S'|}{8}.\]
    This implies that there is a colour $c_v \in [r]\setminus C$ for which $\deg_{c_v}(v, S') \geq \frac{|S'
    |}{8r}.$
    Then, we can find a colour $c_0 \in [r] \setminus C$ and a set $X \subseteq A$ with $|X| = 80rt$ such that $|X \cap T_c| = \emptyset$ for each $c \in C$ and, for each $v \in X$, 
    \[\deg_{c_0}(v, S') \geq \frac{|S'|}{8r}.\]
    
    Now, let us consider the contracted graph $\tilde{G} := \tilde{G}_{c_0}(X, S', (20r)^{-2}|S'|)$, whose vertex set is $X$. If the complement of $\tilde{G}$ had more than $\left(1 - \frac{1}{20r}\right)\frac{|X|^2}{2}$ edges, then it would contain a clique on $20r$ vertices by Tur\'an's theorem. However, this yields an equally large independent set in $\tilde{G}$, which cannot exist since $\deg_{c_0}(v, S') \geq \frac{|S'|}{8r}$ for each $v \in X$ by \cref{fact:small_alpha} applied with $8r$ playing the role of $k$. Thus, we have
        \[e(\tilde{G}) \geq {\binom{|X|}{2}} - \left(1 - \frac{1}{20r}\right) \frac{|X|^2}{2} \geq \frac{|X|^2}{40r} - \frac{|X|}{2} \geq \frac{|X|^2}{80r},\]
    where the last inequality used the fact that $|X|\geq 40r$. This implies that
    \[d(\tilde{G}) \geq \frac{|X|}{40r} \geq 2t.\]
    By \cref{thm:sublinearexpander}, $\tilde{G}$ contains an $(\eps,1)$-expander with average degree at least $d(\tilde{G})/2 \geq t$. In particular, since the $(\cdot,\cdot)$-expander property is monotone, there is a subset $W \subseteq X$ such that $\tilde{G}[W]$ is an $(\eps,1)$-expander with $d(\tilde{G}) \geq t$. Combining this with $W \subseteq X$ yields $t \leq |W| \leq |X| \leq 80rt$. Note that $\tilde{G}[W]$ is just $\tilde{G}_{c_0}(W, S', (20r)^{-2}|S'|)$. 

    Recall that each vertex $v \in W \subseteq X$ satisfies $\deg_{c_0}(v, S') \geq |S'|/(8r)$. Let $S'' \subseteq S'$ be the subset of vertices $u$ satisfying $\deg_{c_0}(u, W) \geq \frac{|W|}{20r}$. By double counting the number of edges between $W$ and $S'$, we obtain 
    \[ \frac{|W||S'|}{8r} \leq e_{c_0}(W,S') \leq |W||S''| + \frac{|W||S'|}{20r}.\]
 This rearranges to 
    \begin{equation}\label{eq:sizeofS''}|S''| \geq \frac{|S'|}{20r} \geq \frac{n}{(20r)^{\ell+1}},\end{equation}
    where we used \ref{prop:S_large}.
Now let $T_{c_0} := W$. We claim that \ref{prop:Tc_disj}--\ref{prop:H2} are satisfied by taking $C'' := C \cup \{c_0\}$, $S''$ and $\ca T'' := \ca T \cup \{T_{c_0}\}$. Indeed, property \ref{prop:Tc_disj} holds by construction and \ref{prop:S_large} follows from \eqref{eq:sizeofS''}. For properties \ref{prop:H1} and \ref{prop:H2}, it suffices to show that the relevant properties hold for $c_0$, as we already know they hold for all $c \in C$. First, note that for each edge $vu$  in $\tilde{G}[T_{c_0}] =\tilde{G}_{c_0}(T_{c_0}, S', (20r)^{-2}|S'|)$ we have
\[\codeg(v, u, B) \geq \codeg(v,u, S') \geq \frac{|S'|}{400r^2} \geq \frac{n}{(20r)^{\ell+2}} \geq \frac{n}{r^{7r}} \geq m,\] and so $\tilde{G}[T_{c_0}]$ is a subgraph of  $\tilde{G}_{c_0}(T_{c_0}, B, m)$. Since the first is an $(\eps, 1)$-expander, so is the second. Finally, our choice of $S''$ ensures that $\deg_{c_0}(u, T_{c_0}) \geq \frac{|T_{c_0}|}{20r}$. This shows that $C'', \ca T'', S''$ satisfy \ref{prop:Tc_disj}--\ref{prop:H2} where $C \subsetneq C''$, contradicting the maximality of $C$. This completes the proof of claim.   
    \end{proof}

To finish the proof, we select uniformly at random a subset $S\subseteq S'$ of size $m$. This is indeed possible since $m := \lfloor r^{-7r} n\rfloor \leq |B|/(20r)^\ell \leq |S'|$ by \ref{prop:S_large}. 

We claim that with positive probability all but at most $160r^2t$ vertices $v \in A$ satisfy: either $\deg(v, S) \leq |S|/2$ or there is some $c \in C$ such that $\deg_c(v, S) \geq |S|/(10r)$. From \cref{claim:allbutfew}, it suffices to show that this holds for vertices $v \in A$ such that either $\deg(v, S') \leq |S'|/4$ or there is some $c \in C$ such that $\deg_{c}(v, S') \geq |S'|/(8r)$. If $\deg(v, S') \leq |S'|/4$, then $\mu_v := \mathbb{E}[\deg(v, S)] = \frac{\deg(v, S')|S|}{|S'|} \leq m/4$. So, by Chernoff's inequality for the hypergeometric distribution (\cref{lem:chernoff}), we have
\[\mathbb{P}\left(\deg(v, S) \geq |S|/2 \right) \leq \mathbb{P}\left(|\deg(v, S) - \mu_v| \geq m/4 \right) \leq 2e^{-\frac{m^2}{12m}} \leq e^{-\sqrt{n}}.\]
If instead $\deg_{c}(v, S') \geq |S'|/(8r)$ for some $c \in C$, then $\mu_v:= \mathbb{E}[\deg_{c}(v, S)] \geq m/(8r)$ and, again by Chernoff's inequality,
\[\mathbb{P}\left(\deg_c(v, S) \leq |S|/(10r)\right) \leq \mathbb{P}(|\deg_c(v, S) - \mu_v| \geq m/(40r)) \leq 2e^{-\frac{m}{3(40r)^2}} \leq e^{-\sqrt{n}}.\]
In conclusion, with probability at least $1 - |A| e^{-\sqrt{n}} \geq 1 - ne^{-\sqrt{n}} > 0$, all but at most $160r^2t$ vertices $v \in A$ satisfy: either $\deg(v, S) \leq |S|/2$ or there is some $c \in C$ such that $\deg_c(v, S) \geq |S|/(10r)$. After fixing such a choice of $S$, we let $A' \subseteq A$ be precisely the set of vertices satisfying this condition that are not in $\bigcup_{c \in C} T_c$, so that $|A'| \geq |A| - 160r^2t - 80r^2t \geq |A| - 240r^2t$ using property \ref{prop:Tc_disj}.

Now we verify that $(C, S, \mathcal{T})$ is a $(r,t,m,\eps)$-connecting hub for $A'$. Note that the conditions on the sizes of the sets in $(C, S, \mathcal{T})$ indeed hold. Property \ref{connhub:1} corresponds directly to \ref{prop:H1} and property \ref{connhub:2} corresponds to \ref{prop:H2}. We argued that \ref{connhub:3} holds in the previous paragraph.
\end{proof}

\subsection{Linked hub families}\label{subsec:linked_families}

Two $(r, t, m, \eps)$-connecting hubs $\ca H = (C, S, \ca T)$ and $\ca H' = (C', S' , \ca T')$ for some set $X$ are said to be vertex-disjoint if $S$ and $S'$ are disjoint and so are any $T \in \ca T$ and $T' \in \ca T'$. As discussed at the start of \cref{sec:absorption}, we will need to find not just one connecting hub, but many vertex-disjoint hubs related to each other in a particular way. This is codified in the following definition.

\begin{defn}[linked hub family]\label{defn:linkedfamily}
    Let $r, t, m\geq 1$ and $\eps > 0$. Let $G$ be an $r$-edge-coloured bipartite graph on vertex classes $A$ and $B$. An \define{$(r,t,m,\eps)$-linked hub family} for $X \subseteq A$ is a collection of pairwise vertex-disjoint $(r, t,m, \eps)$-connecting hubs $\ca H_1, \dots, \ca H_{\ell}$ for $X$, where $\ca H_i = (C_i, S_i, \ca T_i)$, satisfying the following property. For each $1 \leq i < j \leq \ell$ and $c \in [r]$, the set $L_c(i,j) \subseteq X$ defined as 
    \[L_{c}(i,j) := \left\{v \in X: \deg_c(v, S_i), \deg_c(v, S_j) \geq \frac{m}{10r} \right\}\]
    is either empty or of size at least $t$. We refer to the elements of $L_c(i,j)$ as the \define{$c$-links} between $\ca H_i$ and $\ca H_j$. 
\end{defn}

Note that $t$ controls two distinct features of this structure: the size of the routing sets inside the connecting hubs and the size of the sets of $c$-links between hubs. We use the same parameter for both since the play a similar role in the proof, namely providing enough space to build vertex-disjoint connecting paths. 

Now we prove that any edge-coloured bipartite graph $G[A,B]$ contains a linked hub family for nearly all of $A$, where in addition the cores of the hubs in the family covers all but a $r^{-\Omega(r)}$-fraction of $B$.

\begin{lem}\label{lem:linkedfam}
Let $r \geq 2$, let $1/K, \eps \ll 1$ and $1/n \ll 1/r$. Let $m := \lfloor r^{-14r}n \rfloor$. Let $G$ be an $r$-edge-coloured bipartite graph on vertex classes $A$ and $B$ with $|B| = n$ and $|A| \leq m$. Then $G[A,B]$ contains a set $A' \subseteq A$ with $|A'| \geq |A| - r^{3Kr}$ and a $(r, r^{Kr}, m , \eps)$-linked hub family $\ca H_1, \dots, \ca H_\ell$ for $A'$ such that
\[\big| B \setminus \bigcup_{i \in [\ell]} S_i| \leq \frac{n}{r^{7r}},\]
where $S_i$ is the core of $\ca H_i$ for each $i \in [\ell]$.
\end{lem}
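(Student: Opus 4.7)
The plan is a two-phase construction. In Phase 1, setting $A_0 := A$, $B_0 := B$ and $t := r^{Kr}$, I iteratively apply \cref{lem:connectinghub} to $G[A_i, B_i]$ as long as $|B_i| \geq n/r^{7r}$, obtaining a connecting hub $\ca H_{i+1} = (C_{i+1}, S_{i+1}, \ca T_{i+1})$ for some $X_{i+1} \subseteq A_i$ with $|A_i \setminus X_{i+1}| \leq 240 r^2 t$ and core of size $\lfloor r^{-7r}|B_i|\rfloor \geq m$. I then shrink each core to a subset $S'_{i+1} \subseteq S_{i+1}$ of exact size $m$ while preserving all hub properties: \ref{connhub:1} holds since codegrees are measured in $B \supseteq B_i$ and $m \leq \lfloor r^{-7r}|B_i|\rfloor$, \ref{connhub:2} is immediate because $S_{i+1}' \subseteq S_{i+1}$, and \ref{connhub:3} is enforced by selecting $S_{i+1}'$ uniformly at random and reusing the Chernoff computation at the end of the proof of \cref{lem:connectinghub}. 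I update $A_{i+1} := A_i \setminus \bigcup_{c \in C_{i+1}} T_c^{(i+1)}$ and $B_{i+1} := B_i \setminus S'_{i+1}$ to guarantee vertex-disjointness with future hubs, and continue until $|B_\ell| < n/r^{7r}$. Since the cores are disjoint and of size $m$, we have $\ell \leq 2n/m \leq 2r^{14r}$, and the stopping condition gives the required covering property of $B$.

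In Phase 2, I enforce the linked condition by initialising $A' := \bigcap_{i \in [\ell]} X_i$ and repeating: while there is a triple $(i,j,c) \in [\ell]^2 \times [r]$ with $i<j$ and $0 < |L_c(i,j) \cap A'| < t$, delete $L_c(i,j) \cap A'$ from $A'$. Each such triple can be processed at most once, since afterwards $L_c(i,j) \cap A' = \emptyset$ permanently, so in total at most $r\binom{\ell}{2}(t-1)$ vertices are removed. Once the procedure halts, every $L_c(i,j) \cap A'$ is either empty or of size at least $t$; and because the hub properties \ref{connhub:1}--\ref{connhub:3} are inherited when restricting the domain from $X_i$ to any $A' \subseteq X_i$, the hubs $\ca H_1, \dots, \ca H_\ell$ form an $(r, t, m, \eps)$-linked hub family for $A'$.

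Combining both phases, $|A \setminus A'| \leq \ell \cdot 320 r^2 t + r\binom{\ell}{2}(t-1) \leq r^{3Kr}$ for $K$ sufficiently large (using $1/K \ll 1$), as required. The only delicate step is the sub-core trimming in Phase 1: \cref{lem:connectinghub} produces cores of size $\lfloor r^{-7r}|B_i|\rfloor$, whereas \cref{defn:linkedfamily} demands cores of exact size $m$ across all hubs. This adaptation is routine because $m \leq |B_i|/(20r)^{r+2}$ under the stopping condition, ensuring that the probabilistic bounds used to establish \ref{connhub:3} in the proof of \cref{lem:connectinghub} remain valid when the target sample size is reduced from $\lfloor r^{-7r}|B_i|\rfloor$ to $m$.
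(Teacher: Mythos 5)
Your overall strategy coincides with the paper's: greedily extract vertex-disjoint connecting hubs until the uncovered part of $B$ has size below $n/r^{7r}$ (the paper phrases this as a maximal-$\ell$-plus-contradiction argument rather than an explicit loop, but it is the same construction), and then prune the small non-empty link sets exactly as you do in Phase~2. The disjointness bookkeeping, the monotonicity argument for \ref{connhub:1} under enlarging $B_i$ to $B$ and lowering the codegree threshold, and the counting in both phases are all fine.

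The one step that does not work as written is the trimming of each core from size $\lfloor r^{-7r}|B_i|\rfloor$ down to $m$. Property \ref{connhub:3} of the hub returned by \cref{lem:connectinghub} states that each $v$ satisfies $\deg(v,S_{i+1})\le |S_{i+1}|/2$ or $\deg_c(v,S_{i+1})\ge |S_{i+1}|/(10r)$ for some $c$, with \emph{no slack}: if you now take a uniformly random $m$-subset $S'_{i+1}\subseteq S_{i+1}$, a vertex with $\deg(v,S_{i+1})$ equal to (or just below) $|S_{i+1}|/2$ has expected degree essentially $m/2$ into $S'_{i+1}$, so the event $\deg(v,S'_{i+1})\le m/2$ holds only with probability about $1/2$, and no concentration inequality rescues the union bound over all $v\in A$. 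The computation you want to ``reuse'' works in the proof of \cref{lem:connectinghub} only because the sample there is drawn from the intermediate set $S'$ of \cref{claim:allbutfew}, which satisfies the strictly stronger thresholds $|S'|/4$ and $|S'|/(8r)$. So your fix genuinely requires re-opening the proof of \cref{lem:connectinghub} and drawing the size-$m$ sample at that point (which your final paragraph gestures at, and which does work since $m\gg\sqrt{n}$), rather than post-processing the lemma's output as a black box. The paper avoids the issue altogether: at each step it applies \cref{lem:connectinghub} not to the whole remaining part of $B$ but to a subset $B'$ of size exactly $r^{7r}m$, so that the lemma's internal parameter $\lfloor r^{-7r}|B'|\rfloor$ equals $m$ and every core already has the right size. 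Adopting that normalisation removes the only delicate point; the rest of your argument then goes through essentially verbatim.
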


\begin{proof}
First, we construct a collection of vertex-disjoint hubs by iteratively applying \cref{lem:connectinghub}. Let $\ell \geq 0$ be maximal such that $G[A,B]$ contains a set $A' \subseteq A$ with $|A'| \geq |A| - \ell \cdot r^{2Kr}$ and vertex-disjoint $(r,r^{Kr}, m, \eps)$-connecting hubs $\ca H_1, \dots, \ca H_\ell$ for $A'$. Note that $\ell$ is well-defined since we can take $\ell = 0$ and $A' = A$ and the condition is vacuously satisfied. For each $i \in [\ell]$, let $\ca H_i = (C_i, S_i, \ca T_i)$. Note that $S_i \subseteq B$ for each $1 \leq i \leq \ell$ and $|S_i| = m \geq \frac{n}{r^{15r}}$. The $S_i$ are pairwise disjoint, which ensures that $\ell \leq r^{15r}$ and so $|A'| \geq |A| - r^{(2K + 15)r}$. 

We claim that
\begin{equation}\label{eq:smallleftoverinB}\big| B \setminus \bigcup_{i \in [\ell]} S_i \big| \leq \frac{n}{r^{7r}}.\end{equation}
Suppose that this is not the case. We will show that on this assumption a new $(r, r^{Kr}, m, 
\eps)$-connecting hub $\ca H_{\ell + 1}$ can be added to this collection, contradicting the maximality of $\ell$. 

Let $B' \subseteq B \setminus \bigcup_{i \in [\ell]}S_\ell$ be a set of size precisely equal to $r^{7r}m$ (which exists since $r^{7r}m \leq r^{7r} (r^{-14r} n) = r^{-7r}n$). By \cref{defn:connecting_hub}, $A'$ is disjoint from all the routing sets $T_c \in \ca T_i$ for each $i \in [\ell]$.

Now we apply \cref{lem:connectinghub} to $G[A', B']$ with $r, r^{Kr}, \eps, |B'|, m $ playing the roles of $r,t,\eps, n,m$ (note that $|A'| \leq m \leq  |B'|$). Thus, $G[A', B']$ contains a $(r, r^{Kr}, m, \eps)$-connecting hub $\ca H_{\ell+1}$ for a set $A^* \subseteq A'$ of size
\[|A^*| \geq |A'| - 240r^2 \cdot r^{Kr} \geq |A| - (\ell+1) r^{2Kr}. \]
Each $\ca H_j$ with $1 \leq i \leq \ell$ is vertex-disjoint from $\ca H_{\ell+1}$ and it is also a $(r, r^{Kr}, m ,\eps)$-connecting hub for $A^*$, which contradicts the maximality of $\ell$. This shows that \eqref{eq:smallleftoverinB} holds.

It remains to find a suitable subset $A^\dagger \subseteq A'$ so that $\ca H_1, \dots, \ca H_\ell$ forms a $(r,r^{Kr}, m, \eps)$-linked hub family with respect to $A^\dagger$. To this end, we run the following iterative procedure: as long as there is a pair $\{i,j\} \in \binom{[\ell]}{2}$ and some $c \in [r]$ with $0 < |L_c(i,j)| \leq r^{Kr}$, remove all the elements of $L_c(i,j)$ from $A'$ and continue (the other sets $L_{c'}(i', j')$ are updated analogously, retaining only the surviving elements of $A'$). We terminate once this is no longer possible, and let $A^\dagger$ be the resulting set. Each choice of $i, j$ and $c$ is processed at most once in this procedure, and when this occurs no more than $r^{Kr}$ vertices are discarded from $A'$. Thus,
\[|A^\dagger| \geq |A'| - {\binom{\ell}{2} } \cdot r^{Kr+1} \geq |A'| - r^{30r} \cdot r^{Kr + 1} \geq |A| - r^{3Kr}.\]
The procedure's termination guarantees that every non-empty $L_c(i,j)$ satisfies $|L_c(i,j)| \geq r^{Kr}$. So, $\{\ca H_1, \dots, \ca H_\ell\}$ is a $(r, r^{Kr}, m, \eps)$-linked hub family for $A^\dagger$, as desired. \end{proof}

Given a linked hub family $\ca F$ for a set $X$ in an edge-coloured bipartite graph $G[A, B]$, we now introduce a new graph $R(X, \ca F)$ which only encodes the adjacencies in $G$ that are relevant to the family's special properties. This can be viewed as an analogue of the reduced graph in the regularity method, and it is ultimately the graph to which our tree covering upper bound is applied. We will then show that any $u,v  \in X$ that belong to the same $c$-coloured component in $R(X, \ca F)$ are robustly connected in $G_c$. 

Let $G[A,B]$ be an $r$-edge-coloured bipartite graph. Let $\ca F = \{\ca H_1, \dots, \ca H_\ell\}$ be an $(r,t,m\, \eps)$-linked hub family for a set $X \subseteq A$, where $\ca H_i= (C_i, S_i, \ca T_i)$ for each $i\in [\ell]$. We define the \define{simplified graph}, denoted \define{$R(X, \ca F)$}, as the $(r+1)$-edge-coloured graph on vertex set $X \cup \bigcup_{i \in [\ell]} S_i$ whose adjacencies and colours are given by the following rules:
\begin{itemize}
    \item $X$ induces an independent set;
    \item $\bigcup_{i \in [\ell]} S_i$ induces a complete graph in which all edges take colour $r+1$;
    \item if $v \in X$ and $\deg_G(v, S_i) \leq \frac{m}{2}$, then $v$ sends no edges to $S_i$; 
    \item if $v \in X$ and $\deg_G(v, S_i) > \frac{m}{2}$, then, for some $c \in C_i$ such that $\deg_c(v, S_i) \geq \frac{m}{10r}$, every edge between $v$ and $S_i$ is present and $c$-coloured. 
\end{itemize}

The fact that each $\ca H_i$ is an $(r,t,m,\eps)$-connecting hub guarantees that this graph is well-defined: by \ref{connhub:3}, each $v \in X$ with $\deg(v, S_i) > m/2$ must satisfy $\deg_c(v, S_i) \geq \frac{m}{10r}$ for some $c \in C_i$. However, the simplified graph is not necessarily unique as a given $v \in X$ could satisfy $\deg_c(v, S_i) \geq \frac{m}{10r}$ for different choices of $c \in C_i$. So, whenever we talk about $R(X, \ca F)$, we will simply assume that an arbitrary graph satisfying the conditions set above has been fixed. 

Let us remark that adding the colour $r+1$ serves the purpose of `boosting' the minimum degree of $R(X, \ca F)$ so that we can apply our bounds on $tc_{r+1}(\delta)$, which only concern graphs with very high minimum degree. This causes no issue as in this graph the vertices of $A$---that is, the vertices we aim to cover in \cref{lem:absorption}---are not incident with any $(r+1)$-coloured edges. 

Next, we show that monochromatic connectivity in $R(X, \ca F)$ translates to robust monochromatic connectivity in $G$.

\begin{lem}\label{lem:connecting_lemma}
Let $r, \ell, t, m \geq 2$ and $\eps > 0$ such that $1/m \ll 1/t \ll \eps \ll 1$ and $t \geq 20r^4$. Let $G$ be an $r$-edge-coloured bipartite graph on vertex classes $A$ and $B$. Let $\ca F = \{\ca H_1, \dots, \ca H_\ell\}$ be an $(r,t,m,\eps)$-linked hub family for $X \subseteq A$, where $\ca H_i = (C_i, S_i, \ca T_i)$ for each $i \in [\ell]$. Suppose that $U \subseteq A \cup B$ satisfies 
    \begin{enumerate}[label = \textnormal{(U\arabic*)}]
        \item\label{prop:intersecB'} $|U \cap B| \leq \frac{m}{20r}$, 
        \item\label{prop:intersecTc} $|U \cap T| \leq t^{1/2}$ for each $i \in [\ell]$ and $T \in \ca T_i$, and
        \item\label{prop:intersecL'} $|L_{c}(i,j) \setminus U| \geq \ell$ for each distinct $i, j \in [\ell]$ and $c \in [r]$ such that $L_{c}(i,j)$ is non-empty.
    \end{enumerate}
    Let $x, y \in X \setminus U$ be distinct and suppose that $R(X, \ca F)$ contains a $c$-coloured path from $x$ to $y$. Then $G$ contains a $c$-coloured $x$--$y$ path avoiding $U$ of length at most $\ell \cdot \log^5 t$. 
\end{lem}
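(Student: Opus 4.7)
The plan is to use a shortest $c$-coloured $x$--$y$ path in $R(X, \ca F)$ as a skeleton and realise each of its hub transitions as a short $c$-coloured walk in $G$ that leverages the structure of the corresponding connecting hub. Since $X$ is independent in $R(X, \ca F)$ and all edges inside $\bigcup_i S_i$ are coloured $r+1 \neq c$, the skeleton alternates between $X$ and the cores, so it has the form $x = x_0, s_1, x_1, \ldots, s_k, x_k = y$ with $x_j \in X$ and $s_j \in S_{i_j}$. Shortness implies that the indices $i_1, \ldots, i_k$ are pairwise distinct: whenever $i_j = i_{j'}$ with $j < j'$, both $x_{j-1}$ and $x_{j'}$ send $c$-coloured edges to every vertex of $S_{i_j}$ by the construction of $R(X, \ca F)$, so the path segment between them could be shortened through a single vertex of $S_{i_j}$. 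In particular, $k \le \ell$.

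Next, I would replace each internal $x_j$ (for $j \in [k-1]$) with a vertex $x_j' \in L_c(i_j, i_{j+1}) \setminus U$, chosen so that all replacements are pairwise distinct. This is possible greedily: the condition $x_j \in L_c(i_j, i_{j+1})$, which holds since the edges $x_j s_j$ and $x_j s_{j+1}$ in $R(X, \ca F)$ force $\deg_c(x_j, S_{i_j}), \deg_c(x_j, S_{i_{j+1}}) \ge m/(10r)$, guarantees that this link set is non-empty, so \ref{prop:intersecL'} gives it at least $\ell$ vertices outside $U$, exceeding the at most $k - 2 < \ell$ earlier replacements to avoid. Set $x_0' := x$ and $x_k' := y$.

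The core of the proof is then, for each $j \in [k]$, to construct a $c$-coloured path in $G$ from $x_{j-1}'$ to $x_j'$ of length $O(\log^3 t)$ passing through the hub $\ca H_{i_j}$, avoiding both $U$ and all vertices used in prior segments. Since $c \in C_{i_j}$, there is a routing set $T_c \in \ca T_{i_j}$ whose contracted graph $\tilde{G}_c(T_c, B, m)$ is an $(\eps, 1)$-expander by \ref{connhub:1}. The lower bounds $\deg_c(x_{j-1}', S_{i_j}), \deg_c(x_j', S_{i_j}) \ge m/(10r)$, combined with \ref{prop:intersecB'} and the negligible contribution $O(\ell \log^3 t)$ of used $B$-vertices from prior segments, yield sets $S_{j-1}', S_j' \subseteq S_{i_j}$ of unused $c$-neighbours of $x_{j-1}'$ and $x_j'$, both of size at least $m/(40r)$. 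Property \ref{connhub:2} then gives sets $A_j := N_c(S_{j-1}', T_c)$ and $B_j := N_c(S_j', T_c)$ in $T_c$ of size at least $t/(40r)$, using $|U \cap T_c| \le t^{1/2}$ from \ref{prop:intersecTc} together with $t \ge 20r^4$. Applying \cref{thm:conn_in_expanders} to $\tilde{G}_c(T_c, B, m)$ with endpoints $A_j, B_j$ and forbidden set equal to the used vertices in $T_c$ (of size at most $t^{1/2}$) yields a path $t_1, \ldots, t_p$ in the contracted graph with $t_1 \in A_j$, $t_p \in B_j$, and $p \le (2/\eps) \log^3(15 \cdot 80rt) = O(\log^3 t)$. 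I then realise each contracted edge $t_i t_{i+1}$ as $t_i - b_i - t_{i+1}$ in $G$, with $b_i$ chosen greedily from the at least $m$ $c$-common neighbours of $t_i, t_{i+1}$ in $B$, avoiding all previously used $B$-vertices. Finally, picking $s \in N_c(t_1, S_{j-1}')$ and $s' \in N_c(t_p, S_j') \setminus \{s\}$ yields the $c$-coloured segment $x_{j-1}' - s - t_1 - b_1 - \cdots - b_{p-1} - t_p - s' - x_j'$. Concatenating the $k \le \ell$ such segments, each of length $O(\log^3 t)$, produces the required $c$-coloured $x$--$y$ path in $G$ of total length at most $\ell \log^5 t$ for $t$ sufficiently large compared to $1/\eps$.

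The main obstacle is the vertex-disjointness bookkeeping: at every step of the greedy construction, the pools I pick from ($S_{i_j}$, $T_c$, and $c$-coneighbourhoods in $B$) must remain non-empty after removing used vertices. This reduces to showing that the total number of used vertices stays well below the governing thresholds throughout. The worst contributions come from prior segments---at most $O(\ell \log^3 t)$ vertices in $B$ and at most $O(\log^3 t)$ in any single $T_c$ (different hubs have disjoint routing sets, so $T_c$ is only ever entered once)---combined with the bounds on $U$ from \ref{prop:intersecB'}--\ref{prop:intersecL'}. These are dominated by $m/(10r)$ and $t/(20r)$ thanks to $1/m \ll 1/t \ll \eps$ and $t \ge 20r^4$.
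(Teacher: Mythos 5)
Your proposal is correct and follows essentially the same strategy as the paper's proof: extract an alternating skeleton from the $c$-coloured path in $R(X,\ca F)$, reduce it to a sequence of at most $\ell$ distinct hubs, pick $c$-links between consecutive hubs via \ref{prop:intersecL'}, and connect through each hub by combining \ref{connhub:2}, the expander property \ref{connhub:1} and \cref{thm:conn_in_expanders}, realising contracted edges through common $c$-neighbourhoods in $B$. The only (cosmetic) difference is that you obtain distinctness of the visited hubs by taking a shortest path in $R(X,\ca F)$, whereas the paper takes an arbitrary path and then shortens the induced walk of hubs to a path of hubs; both yield the same bound.
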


\begin{proof}
    For each $i \in [\ell]$, let $\ca H_i := (C_i, S_i, \ca T_i)$. Let $Y := \bigcup_{i \in [\ell]} S_i$ and recall from its definition that $R(X, \ca F)$ is an $(r+1)$-edge-coloured graph on vertex set $X \cup Y$. Also recall that, in $R(X, \ca F)$, the vertices $x, y \in X$ are only incident with edges whose colours are in $[r]$, and so the given monochromatic $x$--$y$ path, say $P$, takes a colour $c \in [r]$. But the subgraph of $R(X, \ca F)$ induced by colours in $[r]$ is bipartite with partition $X\cup Y$, which implies that $P$ alternates between vertices of $X$ and $Y$. So, let us write
    \[P = x u_1 v_1 u_2 v_2 \dots u_k y,\]
    where $u_i \in Y$ and $v_i \in X$ for each $i \in [k]$. For each $i \in [k]$, let $a_i \in [\ell]$ be the unique integer with $u_i \in S_{a_i}$. Crucially, observe that for all $i \in [k-1]$, if $S_{a_i} \neq S_{a_{i+1}}$, then we have that $v_i$ is a $c$-link between $\ca H_{a_i}$ and $\ca H_{a_{i+1}}$ (as defined in \cref{defn:linkedfamily}). Indeed, the fact that there is a $c$-coloured edge from $v_i$ to $u_{i} \in S_{a_i}$ implies that $\deg_c(v_i, S_{a_i}) \geq \frac{m}{10r}$ by the definition of $R(X, \ca F)$, and the same holds with respect to $S_{a_{i+1}}$. For the same reason, we have $\deg_c(x, S_{a_1}), \deg_c(y, S_{a_k})\geq \frac{m}{10r}$. 

    The sequence $\ca H_{a_1} \dots \ca H_{a_k}$ can be viewed as a `walk' from $\ca H_{a_1}$ to $\ca H_{a_k}$, in the sense that for any two consecutive hubs in this sequence their set of $c$-links $L_c(a_i, a_{i+1})$ is non-empty. Now we take such a walk of minimal length \(W = \ca H_{b_1} \dots \ca H_{b_s}, \) where $b_1 = a_1$ and $ b_s = a_k$. By the minimality of this walk, the indices $b_1, \dots, b_s$ are all distinct elements of $[\ell]$ (in other words, $W$ is a path), and so $s \leq \ell$. Using property \ref{prop:intersecL'}, we can easily choose $s-1 \leq \ell$ distinct vertices $q_1, \dots, q_{s-1}$ such that $q_i \in L_c(b_i, b_{i+1}) \setminus U$. Now we build a $c$-coloured path from $x$ to $y$ by showing that any two consecutive vertices in $x, q_1, \dots, q_{s-1}, y$ are connected by a short $c$-coloured path passing through a hub. 

    The argument is the same for any of these pairs, so it suffices to show how to connect $x$ to $q_1$. Observe that $\deg_c(x, S_{b_{1}}), \deg_c(q_1, S_{b_1}) \geq \frac{m}{10r}$, and so we can pick distinct vertices $w,z \in S_{b_1} \setminus U$ such that $xw, zq_1 \in E(G_c)$ (using property \ref{prop:intersecB'}). Note that $x$ being incident with $c$-coloured edges to $S_{b_1}$ in $R(X, \ca F)$ implies that $c$ belongs to the colour set $C_{b_1}$ of $\ca H_{b_1}$, and thus $\ca T_{b_1}$ contains a routing set $T_c$ in colour $c$. Now, by property \ref{connhub:2}, we have
    \[\deg_c(z, T_c), \deg_c(w, T_c) \geq \frac{|T_c|}{20r} \geq \frac{t^{3/4}}{15},\]
    where we used $|T_c| \geq t$ by \cref{defn:connecting_hub} together with $t \geq 20r^4$. 
    
    Recall from property \ref{connhub:1} that $\tilde{G_c}(T_c, B, m)$ is an $(\eps, 1)$-expander. So, we can apply \cref{thm:conn_in_expanders} to this graph with $N_c(z, T_c), N_c(w, T_c)$ and $U \cap T_c$ playing the roles of $U_1,U_2$ and $W$ and $t^{3/4}/15$ playing the role of $x$. Let us verify that its assumptions are met. Note that
    \[\rho\left(\frac{t^{3/4}}{15}\right) = \frac{\eps}{\log^2(t^{3/4})} =  \frac{16\eps}{9\log^2 t}.\]
   As noted earlier, we have $|N_c(z, T_c)|,|N_c(w, T_c)| \geq t^{3/4}/15$. Furthermore, \[|U \cap T_c| \leq t^{1/2} \leq \frac{\eps t^{3/4}}{36\log^2 t} \leq \frac{1}{4} \cdot \rho\left(\frac{t^{3/4}}{15}\right) \cdot \frac{t^{3/4}}{15},\]
    where in the second inequality we used the fact that $1/t \ll \eps$. Therefore, \cref{thm:conn_in_expanders} yields a path from $N_c(z, T_c) $ to $N_c(w, T_c)$ in $\tilde{G}_c(T_c, B, m) - U$ of length at most 
    \[\frac{2}{\eps} \log^3(15|T_c|) \leq \frac{2}{\eps} \log^3(1200rt) \leq \log^4 t,\]
    where the first inequality used the fact that $|T_c| \leq 80rt$ from \cref{defn:connecting_hub}, and the second used $t \geq 20 r^4$ and $1/t \ll \eps$. Let this path be $P' = p_1 \dots p_{s'}$, where $s' \leq \log^4 t.$ For any $1 \leq i \leq s' - 1$, we have $\codeg_c(p_i, p_{i+1}, B) \geq m$ by the definition of $\tilde{G_c}(T_c, B, m)$. Thus, 
    \[\codeg_c(p_i, p_{i+1}, B) - |U| - \left|\{z,w\} \right| \geq m -\frac{m}{20r} - 2 \geq s'-1,\]
    using the fact that $1/m \ll 1/t$. We can thus easily pick $s'-1$ distinct vertices $d_1, \dots, d_{s'-1} \in B \setminus (U \cup \{z,w\})$ such that $p_id_i$ and $d_i p_{i+1}$ are $c$-coloured edges. This yields the $c$-coloured path
    \[xwp_1d_1p_2d_2 \dots d_{s'-1} p_{s'}zq_1,\]
    connecting $x$ to $q_1$ while avoiding $U$. This path, which we denote by $Q_1$, is of length at most $2s' + 4 \leq 3 \log^4 t$.

    The argument just given can now be repeated for each pair $q_i, q_{i+1}$ with $1 \leq i \leq s-2$, and also for the pair $q_{s-1}, y$. In each iteration, we find unused $z_i, w_i \in S_{b_{i+1}}$ such that $q_iz_i, w_iq_{i+1}$ are $c$-coloured edges (with $q_{s} := y$), and then connect them via a $c$-coloured path $Q_{i+1}$ of length at most $3 \log^4 t$ using the routing set $T_c \in \ca T_{b_{i+1}}$ of $\ca H_{b_{i+1}}$. Note that we can always pick from a non-empty set of unused neighbours or coneighbours throughout the procedure since the total number of unavailable vertices in $B$ is at most $|U| + s \cdot 3 \log^4 t \leq m/(15r)$, whereas the lower bounds on degrees and codegrees into $B$ that we use are at least $m/(10r)$. Furthermore, for each $i \in [s]$ the set $T_c \in \ca T_{b_{i+1}}$ is only used once, namely to connect $q_i$ and $q_{i+1}$, and so the inequalities regarding available vertices in $T_c$ remain the same as above. 

    \begin{figure}
        \centering
        \includegraphics[width=.8\linewidth]{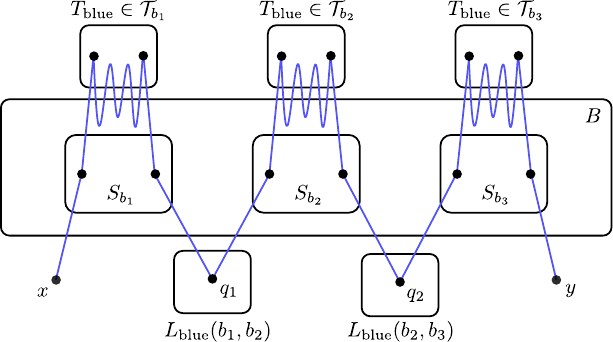}
        \caption{A visual depiction of the construction of the monochromatic $x$--$y$ path in \cref{lem:connecting_lemma}, assuming for simplicity that $s = 3$.}
        \label{fig:connectinglemma}
    \end{figure}

    At the end of this procedure, we obtain a $c$-coloured path 
    \[xQ_1q_1 Q_2 q_2 \dots q_{s-1} Q_s y \]
    avoiding $U$ of length at most $3s \log^4 t \leq \ell \cdot \log^5 t$. See \cref{fig:connectinglemma} for a visual of the final path (assuming, for simplicity, that $s= 3$ and $c= \textrm{blue}$). This completes the proof of the lemma.
\end{proof}

Our next lemma shows how to lower bound the minimum degree of $R(X, \ca F)$ as a function of the degrees of vertices in $X$.

\begin{lem}\label{lem:mindeg} Let $n,m, r, t \geq 2$, let $1/n \leq \delta \leq 1/2$ and $\eps > 0$.
    Let $G$ be an $r$-edge-coloured bipartite graph on vertex classes $A$ and $B$ with $|B| = n$. Let $\ca F = \{\ca H_1, \dots, \ca H_\ell\}$ be a $(r,t,m ,\eps)$-linked hub family for a non-empty set $X \subseteq A$ with $|X| \leq \delta n$. Suppose that $\deg(v, \bigcup_{i\in[\ell]}S_i) \geq (1- \delta)n$ for each $v \in X$, where $S_i$ is the core of $\ca H_i$ for each $i \in [\ell]$. Then
    \[\delta \left(R(\ca F, X)\right) \geq (1 - 6\delta)|R(X, \ca F)|. \]
\end{lem}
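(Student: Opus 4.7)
The plan is to split into two cases according to whether the vertex in question lies in $X$ or in $Y := \bigcup_{i \in [\ell]} S_i$; note that the two sets are disjoint and $V(R(X,\ca F)) = X \cup Y$ has size $|X| + |Y|$ (with $|Y| = \ell m$). The first preparatory observation I would make is that, since $X$ is non-empty, picking any $v \in X$ and using the hypothesis $\deg_G(v, Y) \geq (1-\delta) n$ together with $Y \subseteq B$ forces $|Y| \geq (1-\delta) n \geq n/2$ (recall $\delta \leq 1/2$). In particular, $|X| + |Y| \geq n/2$, so $6 \delta (|X|+|Y|) \geq 3 \delta n$, which will dominate everything below.

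Next, for $v \in X$, the key structural observation is that, by the definition of $R(X,\ca F)$, as soon as $\deg_G(v, S_i) > m/2$ the vertex $v$ is adjacent in $R(X,\ca F)$ to \emph{every} vertex of $S_i$. Call an index $i \in [\ell]$ \emph{bad} for $v$ if $\deg_G(v, S_i) \leq m/2$; each bad $i$ then contributes at least $m/2$ non-neighbours of $v$ within $Y$. Since $v$ has at most $|Y| - (1-\delta) n \leq \delta n$ non-neighbours in $Y$, there are at most $2 \delta n / m$ bad indices, and therefore
\[\deg_{R(X,\ca F)}(v) \;\geq\; (\ell - 2 \delta n / m) \cdot m \;=\; |Y| - 2 \delta n.\]
Using $|X| \leq \delta n$, the target inequality $\deg_{R(X,\ca F)}(v) \geq (1 - 6\delta)(|X|+|Y|)$ rearranges to $6 \delta (|X|+|Y|) \geq |X| + 2 \delta n$, and the right-hand side is at most $3 \delta n$, so this follows from the preliminary estimate.

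For $u \in Y$, the colour-$(r+1)$ clique on $Y$ built into $R(X,\ca F)$ already gives $\deg_{R(X,\ca F)}(u) \geq |Y| - 1$, so the task reduces to $6 \delta (|X|+|Y|) \geq |X| + 1$. Here the hypothesis $1/n \leq \delta$ supplies $\delta n \geq 1$, whence $|X| + 1 \leq \delta n + 1 \leq 2 \delta n$, which is dominated by the $3 \delta n$ bound from the first paragraph.

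There is no real obstacle: this is a bookkeeping lemma and the argument is quite rigid. The only mild subtlety worth naming is that the definition of $R(X,\ca F)$ turns the one-sided threshold $\deg_G(v, S_i) > m/2$ into the presence of \emph{all} $m$ possible edges from $v$ into $S_i$, and it is this amplification that allows the count of bad indices to be efficient enough to absorb the constant $6$ in the target minimum degree bound.
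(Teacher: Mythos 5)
Your proof is correct and follows essentially the same route as the paper's: bound the number of "bad" hubs with $\deg_G(v,S_i)\leq m/2$ by $2\delta n/m$ via the non-neighbour count, use completeness to the remaining cores, and handle $Y$ via the colour-$(r+1)$ clique. The only difference is cosmetic bookkeeping (you compare directly against $|X|+|Y|$ where the paper first bounds degrees against $n$ and converts at the end).
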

\begin{proof}
    Let $Y := \bigcup_{i \in [\ell]} S_i$. Note that, in $R := R(X, \ca F)$, $Y$ forms a monochromatic $(r+1)$-coloured clique. Since $X$ is non-empty, some element of $X$ has at least $(1- \delta)n$ neighbours in $Y$. This implies
    \[\deg_R(v) \geq |Y| - 1 \geq (1- 2\delta)n \]
    for each $v \in Y$ (also using $1/n \leq \delta$). 

    For each $u \in X$, let $Q_u := \{ i \in [\ell]: \deg_G(u, S_i) \leq m/2\}$. We can count the degree of $u$ into $Y$ in two ways as
    \[(1 - \delta)n \leq \deg_G(u, Y) \leq \frac{|Q_u|m}{2} + (\ell - |Q_u|)m.\]
    However, we have $\ell \cdot m = |Y|$ and so this inequality rearranges to
    \[|Q_u| \leq \frac{2}{m} \left(|Y| -  (1- \delta)n \right) \leq \frac{2\delta n}{m} \leq 4\delta \ell,\]
    where in the last inequality we used $n/2 \leq (1-\delta)n \leq |Y| = \ell m$. By the construction of $R$, the vertex $u$ is complete (in some colour) to each $S_i$ with $i \in [\ell] \setminus Q_u$. Therefore, 
    \[\deg_{R}(u) = (\ell - |Q_u|) \cdot m \geq (1- 4\delta) |Y| \geq (1 - 4\delta)(1- \delta)n \geq (1- 5\delta)n.\]
    But $|X \cup Y| \leq (1 + \delta)n$ and so $n \geq (1 - \delta)|X\cup Y|$. Finally, 
    \[\delta(R) \geq (1 -5\delta)(1-\delta)|X\cup Y| \geq (1 - 6\delta)|X \cup Y|,\] which completes the proof as $V(R) = X \cup Y$. \end{proof}

\subsection{\texorpdfstring{Covering all but $r^{\ca O(r)}$ vertices}{Covering all but r**O(r) vertices}}\label{subsec:covering_almost}

In this section, we apply the machinery of connecting  hubs to show that edge-coloured bipartite graphs $G[A,B]$ (where $A$ is small) contain monochromatic cycles covering all but $r^{\ca O(r)}$ vertices of $A$. We start with the following classical result of P\'osa (see \cite{lovaszcombinatorial}). 

\begin{thm}[P\'osa]\label{thm:posa} The vertices of any graph $G$ can be covered with at most $\alpha(G)$ vertex-disjoint cycles. 
\end{thm}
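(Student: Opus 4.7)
The plan is to prove the theorem by strong induction on $n := |V(G)|$, with the trivial base case $n = 0$. For the inductive step, I aim to identify a subset $U \subseteq V(G)$ such that $G[U]$ is a (possibly degenerate) cycle and $\alpha(G - U) \le \alpha(G) - 1$. Given such a $U$, the inductive hypothesis applied to $G - U$ yields a partition of $V(G) \setminus U$ into at most $\alpha(G) - 1$ cycles, and appending $G[U]$ produces the required cycle partition of $V(G)$ of size at most $\alpha(G)$.

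To find $U$, I examine a longest path $P = v_0 v_1 \cdots v_\ell$ in $G$. If $\ell = 0$, then $G$ has no edges, $\alpha(G) = n$, and each singleton is a degenerate cycle, which handles that case. Otherwise $\ell \ge 1$, and the longest-path property forces $N_G(v_0) \subseteq V(P)$. Let $j \ge 1$ be the largest index with $v_0 v_j \in E(G)$. When $j = 1$, the unique neighbour of $v_0$ is $v_1$, so $U := \{v_0, v_1\}$ is a degenerate (single-edge) cycle, and since any independent set of $G - U$ extends by $v_0$, we have $\alpha(G - U) + 1 \le \alpha(G)$, as required. When $j \ge 2$, I take $U := V(C)$ for the cycle $C := v_0 v_1 \cdots v_j v_0$, and the task reduces to verifying $\alpha(G - V(C)) \le \alpha(G) - 1$.

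For the $j \ge 2$ case, the strategy is a P\'osa-style rotation argument. Suppose for contradiction that $G - V(C)$ contains an independent set $I$ with $|I| = \alpha(G)$; equivalently, $V(C) \subseteq N_G(I)$, as otherwise $I$ could be augmented by some vertex of $V(C)$. Rotating $P$ around the chord $v_0 v_j$ yields the alternate longest path $P' := v_{j-1} v_{j-2} \cdots v_0 v_j v_{j+1} \cdots v_\ell$ with new endpoint $v_{j-1}$; iterating rotations around the current endpoint and subsequent chords produces a family of longest paths whose endpoints form a set $S \subseteq V(P)$. A pigeonhole-style argument then extracts a neighbour in $I$ of one such endpoint $w \in S$, yielding an extension of the corresponding longest path to length $\ell + 1$ and contradicting the maximality of $P$. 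Hence no such $I$ exists, giving $\alpha(G - V(C)) \le \alpha(G) - 1$.

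The main technical obstacle is making the rotation argument precise: one has to carefully control which vertices can serve as endpoints of rotated longest paths, and leverage the hypothesis $V(C) \subseteq N_G(I)$ to guarantee that some such endpoint has the sought neighbour in $I$. An easier edge case is $j = \ell$, where $V(C) = V(P)$ and, by $P$'s maximality, $v_\ell$ already has no neighbour outside $V(C)$ and hence none in $I$, so $v_\ell$ directly augments $I$ without invoking any rotation.
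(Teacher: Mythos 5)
The paper does not prove this statement; it cites it as a classical result of P\'osa (referring to Lov\'asz's \emph{Combinatorial Problems and Exercises}), so your proposal can only be compared with the standard textbook proof. Your skeleton is exactly that proof: induct, take a longest path $P = v_0v_1\cdots v_\ell$, note $N_G(v_0)\subseteq V(P)$, let $j$ be the largest index with $v_0v_j\in E(G)$, remove the cycle $C=v_0v_1\cdots v_jv_0$, and show $\alpha$ drops. But your treatment of the main case $j\ge 2$ has a genuine gap: the entire burden of the proof is placed on a ``P\'osa-style rotation argument'' that you never carry out, and whose decisive step (``a pigeonhole-style argument then extracts a neighbour in $I$ of one such endpoint'') is not even well-posed as stated. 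Endpoints of rotated longest paths lie in $V(P)$, and a neighbour of such an endpoint belonging to $I$ only contradicts maximality of $P$ if that neighbour lies \emph{outside} $V(P)$; since $I$ may well intersect $V(P)\setminus V(C)$, the sketched contradiction does not follow. You yourself flag this as ``the main technical obstacle,'' so the proof is incomplete precisely where it matters.

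The irony is that the one-line argument you already use in the cases $j=1$ and $j=\ell$ works verbatim for every $j\ge 1$ and makes the rotation machinery unnecessary. By maximality of $j$, \emph{every} neighbour of $v_0$ is among $v_1,\dots,v_j$, i.e.\ $N_G(v_0)\subseteq V(C)$. Hence $v_0$ has no neighbour in $G-V(C)$, so for any independent set $I$ of $G-V(C)$ the set $I\cup\{v_0\}$ is independent in $G$, giving $\alpha(G-V(C))\le \alpha(G)-1$ immediately. Substituting this observation for the rotation sketch turns your proposal into the standard, complete proof.
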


This leads to the following corollary, essentially already proven in \cite{korandilangletzterpokrovksiy}. 

\begin{corollary}\label{cor:classiccover}
    Let $n, k \geq 1$. Let $G$ be a bipartite graph on vertex classes $A$ and $B$ with $|B| = n$ and $|A|\leq \frac{n}{5k^2}$. If each vertex $v \in A$ satisfies $\deg(v, B) \geq \frac{n}{k}$, then $G$ contains a collection of at most $2k$ vertex-disjoint cycles covering $A$. 
\end{corollary}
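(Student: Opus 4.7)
The plan is to apply P\'osa's theorem (\cref{thm:posa}) to the contracted graph $\tilde G := \tilde G(A, B, \lceil n/(5k^2) \rceil)$ and then lift the resulting cycle partition to a cycle partition in $G$ by inserting $B$-vertices into the edges of the cycles. By \cref{fact:small_alpha} (applied with the given $k$), we have $\alpha(\tilde G) \leq 2k - 1$, so by \cref{thm:posa} the vertex set of $\tilde G$ (which is $A$) can be partitioned into cycles $C_1, \ldots, C_s$ with $s \leq 2k - 1 \leq 2k$, where some $C_i$ may be degenerate (a single vertex or a single edge).

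Next, I replace each $C_i$ by a cycle $C_i'$ in $G$ covering the same vertices of $A$. If $C_i$ is a single vertex $v$, set $C_i' := v$, which is a degenerate cycle. If $C_i$ is a single edge $uv$ of $\tilde G$, form the $4$-cycle $C_i' = u b_1 v b_2 u$ for any two distinct vertices $b_1, b_2 \in N_G(u, B) \cap N_G(v, B)$. If $C_i = v_1 v_2 \cdots v_\ell v_1$ with $\ell \geq 3$, form the $2\ell$-cycle $C_i' = v_1 b_1 v_2 b_2 \cdots v_\ell b_\ell v_1$, where each $b_j \in N_G(v_j, B) \cap N_G(v_{j+1}, B)$ (indices mod $\ell$).

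The only point to verify is that the $b_j$'s can be chosen distinct across all cycles. The total number of $B$-insertions needed is
\[
\sum_{|C_i| \geq 2} |V(C_i')\cap B| \;=\; \sum_i \bigl(|C_i| - [|C_i|=1]\bigr) \;\leq\; |A|.
\]
By construction of $\tilde G$, for every edge $uv \in E(\tilde G)$ we have $|N_G(u, B) \cap N_G(v, B)| \geq \lceil n/(5k^2) \rceil \geq |A|$, where the last inequality uses the hypothesis $|A| \leq n/(5k^2)$. Therefore, when picking the next $b_j$ greedily, the number of forbidden (already used) vertices in $B$ is at most $|A| - 1$, strictly less than the size of the codegree set we are choosing from, so a fresh choice always exists. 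The resulting family $\{C_1', \ldots, C_s'\}$ consists of at most $2k$ vertex-disjoint cycles of $G$ covering $A$, as required. No real obstacle arises here; the whole argument is essentially bookkeeping around \cref{fact:small_alpha} and \cref{thm:posa}.
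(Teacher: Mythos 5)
Your proof is correct and follows essentially the same route as the paper: apply \cref{fact:small_alpha} to bound the independence number of the contracted graph, cover its vertex set by at most $2k$ cycles via P\'osa's theorem, and lift these to vertex-disjoint cycles of $G$ by greedily inserting common neighbours from $B$, using $\codeg \geq n/(5k^2) \geq |A|$ to guarantee fresh choices. Your explicit treatment of the degenerate (single-vertex and single-edge) cycles is a minor bookkeeping refinement over the paper's phrasing but not a different argument.
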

\begin{proof}
   First, we apply \cref{fact:small_alpha} to $G[A,B]$ to deduce that the contracted graph $\tilde{G}(A,B, \frac{n}{5k^2})$ has no independent set of size $2k$. Thus, $A= V(\tilde{G}(A, B, \frac{n}{5k^2}))$ can be covered with at most $2k$ vertex-disjoint cycles $C_1, \dots, C_\ell$ by \cref{thm:posa}. By the definition of $\tilde{G}(A, B, \frac{n}{5k^2})$, for any pair of vertices $x, y \in A$ that are neighbours in some $C_j$, we have $\codeg_G(x,y, B) \geq \frac{n}{5k^2} \geq |A|$. So, we can greedily pick a unique coneighbour in $G$ for each of these pairs, thus yielding a collection of vertex-disjoint cycles covering $A$.   \end{proof}

Now we prove the main lemma of this subsection.

\begin{lem}\label{lem:allbutrOr} Let $1/n \ll 1/r, 1/K$ and $1/K \ll 1$. Let $\delta \in [r^{-7r}, 1/24]$. Let $G$ be an $r$-edge-coloured bipartite graph on vertex classes $A$ and $B$ with $|B| = n$ and $|A| \leq \frac{n}{r^{40r}}$. Suppose that $\deg(v, B) \geq (1 - \delta)n$ for each $v \in A$. Then there is a collection of at most $tc_{r+1}(12\delta)$ vertex-disjoint monochromatic cycles covering all but at most $r^{Kr}$ vertices of $A$. 
\end{lem}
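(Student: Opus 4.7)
We reduce the cycle-covering problem on $G$ to a tree-covering problem on an auxiliary graph. Pick a constant $K_0$ with $40 \leq K_0 \leq K/3$ (permitted since $1/K \ll 1$) and apply \cref{lem:linkedfam} with $K_0$ in place of $K$ to obtain a subset $A' \subseteq A$ with $|A| - |A'| \leq r^{3K_0 r} \leq r^{Kr}$, together with an $(r, r^{K_0 r}, m, \eps)$-linked hub family $\ca F = \{\ca H_1, \ldots, \ca H_\ell\}$ for $A'$, where $m = \lfloor r^{-14r}n\rfloor$. Let $Y := \bigcup_i S_i$; by \cref{lem:linkedfam}, $|B \setminus Y| \leq n/r^{7r} \leq \delta n$ (using $\delta \geq r^{-7r}$), so $\deg_G(v, Y) \geq (1-2\delta)n$ for each $v \in A'$. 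Form $R := R(A', \ca F)$ and apply \cref{lem:mindeg} (with $2\delta$ in place of $\delta$) to obtain $\delta(R) \geq (1-12\delta)|R|$. The definition of $tc_{r+1}(12\delta)$ then yields a monochromatic tree cover $T_1, \ldots, T_q$ of $V(R)$ with $q \leq tc_{r+1}(12\delta)$; arbitrarily assign each $v \in A'$ to one tree $T_{j(v)}$ containing it, and set $X_j := \{v \in A' : j(v) = j\}$, so $\{X_j\}_j$ partitions $A'$ with $X_j \subseteq V(T_j)$.

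For each $j$ with $X_j \neq \emptyset$, let $c_j \in [r]$ be the colour of $T_j$ (necessarily in $[r]$, since $A'$-vertices bear no colour-$(r+1)$ edges in $R$). By the construction of $R$, every $v \in X_j$ is $c_j$-complete in $G$ to at least one core, so $\deg_{c_j}(v, Y) \geq m$. Thus \cref{cor:classiccover} applied to $G_{c_j}[X_j, Y]$ yields at most $s_j \leq 4r^{14r}$ vertex-disjoint $c_j$-cycles $C_1^j, \ldots, C_{s_j}^j$ covering $X_j$. We merge these iteratively into a single $c_j$-cycle $\tilde C_j$ as follows. To merge two vertex-disjoint bipartite cycles $C, C'$ contributing to the same tree, pick consecutive $X_j$-vertices $u_1, u_2 \in V(C)$ (separated by a single $Y$-vertex $y \in V(C)$) and $v_1, v_2 \in V(C')$ (separated by $y' \in V(C')$), and use two applications of \cref{lem:connecting_lemma}---whose hypotheses hold because $X_j \subseteq V(T_j)$ is $c_j$-connected in $R$---to obtain internally vertex-disjoint $c_j$-paths $P_1$ from $u_1$ to $v_1$ and $P_2$ from $u_2$ to $v_2$ in $G$, avoiding all previously used vertices. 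The merged cycle
\[u_1 \to \mathrm{arc}_C(u_1, u_2) \to u_2 \to P_2 \to v_2 \to \mathrm{arc}_{C'}(v_2, v_1) \to v_1 \to P_1^{-1} \to u_1,\]
where $\mathrm{arc}_C(u_1, u_2)$ is the $u_1$--$u_2$ path in $C$ avoiding $y$, is a valid bipartite cycle containing every $X_j$-vertex of $V(C) \cup V(C')$ (only the two $Y$-vertices $y, y'$ are dropped). Iterating $s_j - 1$ such merges produces $\tilde C_j$, and the family $\{\tilde C_j : X_j \neq \emptyset\}$ consists of at most $q \leq tc_{r+1}(12\delta)$ vertex-disjoint monochromatic cycles covering $A'$, leaving at most $|A| - |A'| \leq r^{Kr}$ vertices of $A$ uncovered.

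The main technical obstacle is maintaining the constraints (U1)--(U3) of \cref{lem:connecting_lemma} as the forbidden set $U$ grows across all path invocations. In total one performs at most $\sum_j 2(s_j - 1) \leq r^{15r + o(r)}$ applications, each contributing at most $\ell \log^5 t = r^{15r + o(r)}$ vertices to $U$, with at most one per routing set and at most one per link set. Choosing $K_0 \geq 40$ ensures $t^{1/2} \geq r^{20r}$ and $|L_c(i,j)| \geq r^{40r}$ both comfortably dominate the cumulative routing-set and link loads, so (U2) and (U3) hold throughout; (U1) follows because the total $B$-load $r^{30r + o(r)}$ is much smaller than $m/(20r)$ when $n$ is sufficiently large relative to $r$ and $K$.
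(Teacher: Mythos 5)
Your overall strategy is the same as the paper's (linked hub family, simplified graph, tree cover of $R(A',\ca F)$, P\'osa-type cycles per component via \cref{cor:classiccover}, then stitching via \cref{lem:connecting_lemma}), but there is one genuine gap in how you maintain the hypotheses of \cref{lem:connecting_lemma}, and it is precisely the subtlety the construction is designed around.

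The gap concerns \ref{prop:intersecL'}. By \cref{defn:linkedfamily}, every link set $L_c(i,j)$ is a subset of $A'$ (the set the hub family is defined for). Your cycles from \cref{cor:classiccover} cover \emph{all} of $A'$, i.e.\ $\bigcup_j X_j = A'$, and every connecting path you build must avoid all cycle vertices (to keep the final family vertex-disjoint). Hence by the time you invoke \cref{lem:connecting_lemma}, the forbidden set $U$ contains every link vertex, so $|L_c(i,j)\setminus U| = 0 < \ell$ and the lemma is inapplicable. Your accounting only tracks the load that the connecting paths themselves place on the link sets ("at most one per link set" per invocation) and compares it to $|L_c(i,j)| \geq r^{K_0 r}$; it misses that the cycles consume the link sets entirely. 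This is not a bookkeeping slip that more generous constants can absorb: the connecting paths genuinely need fresh link vertices as intermediate stations between hubs. The fix is to reserve, for each non-empty $L_c(i,j)$, a subset $L'_c(i,j)$ of size $r^{K_0 r}$ \emph{before} building any cycles, remove these reserved vertices from $A'$ to form a set $A''$, and only cover $A''$ with the P\'osa cycles; the reserved vertices not consumed by connecting paths are simply left uncovered and absorbed into the $r^{Kr}$ leftover (which still works since $\binom{\ell}{2} r \cdot r^{K_0 r} \leq r^{(K_0+31)r} \leq r^{Kr}$). This is exactly what the paper does.

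Two smaller points. First, adjacency of $v$ to $S_i$ in colour $c_j$ in the simplified graph only guarantees $\deg_{c_j}(v,S_i)\geq m/(10r)$ in $G$, not $\deg_{c_j}(v,Y)\geq m$; this just changes the parameter $k$ in \cref{cor:classiccover} by a factor of $10r$ and is harmless. Second, your \ref{prop:intersecB'} accounting omits the $B$-vertices of the cycles themselves (up to $|A'|\leq n/r^{40r}$ of them, far exceeding $r^{30r+o(r)}$), though since $n/r^{40r}\ll m/(20r)$ the conclusion survives. You should also say a word about degenerate cycles (a single vertex of $X_j$, or a component $T_j$ that is a single vertex) and about making the P\'osa cycles for different $j$ disjoint on the $B$-side by passing $Y\setminus U$ rather than $Y$ to \cref{cor:classiccover}.
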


\begin{proof}
    Introduce a new constant $\eps > 0$ with
    \[1/K \ll \eps \ll  1. \]
    Also, let $k := \lfloor K/6 \rfloor$, so that $1/k \ll \eps$. Now we apply \cref{lem:linkedfam} to $G[A, B]$ with $k$ playing the role of $K$ and $r, \eps, n$ playing their own roles. Letting $m := \lfloor r^{-14r} n \rfloor$, this yields a set $A' \subseteq A$ with $|A'| \geq |A| - r^{3kr}$ and a  $(r, r^{kr}, m, \eps)$-linked hub family $\ca F = \{\ca H_1, \dots, \ca H_\ell\}$ for $A'$ such that, letting $\ca H_i = (C_i, S_i, \ca T_i)$ and $Y := \bigcup_{i \in [\ell]} S_i$, we have
    \begin{equation}\label{eq:leftoversize}\big|B \setminus Y \big| \leq \frac{n}{r^{7r}}. \end{equation}
    This implies \(\deg(v, Y) \geq (1- \delta)n - \frac{n}{r^{7r}} \geq (1 - 2\delta)n \geq (1-2\delta)|Y|\) for all $v \in A'$ since $\delta \geq r^{-7r}$. Thus, the simplified graph $R := R(A', \ca F)$ satisfies 
    \[\delta(R) \geq (1 - 12\delta)|V(R)|\]
    by \cref{lem:mindeg}. Then, by definition, $R$ can be covered with monochromatic components $D_1, \dots, D_t$ for some $t \leq tc_{r+1}(12\delta)$. For each $i \in [t]$, let $c_i$ be the colour of the component $D_i$ in $R$.  

    For each choice of $1 \leq i < j \leq \ell$ and $c \in [r]$ such that $|L_c(i,j)| > 0$, we have $|L_c(i,j)| \geq r^{kr}$ by \cref{defn:linkedfamily}. So let us reserve a subset $L'_c(i,j) \subseteq L_c(i,j)$ of size precisely $ r^{kr}$. These sets are not necessarily disjoint. Let $A''$ be obtained from $A'$ by removing all elements belonging to each $L_c'(i,j)$. Observe that $\ell \leq r^{15r}$ since the $S_i$ are vertex-disjoint subsets of $B$ of size $m \geq r^{-15r}n$. Then
    \[|A''| \geq |A'| - {\binom{\ell}{2}} r^{kr} \geq |A| - 2r^{3kr}.\]

    For each $i \in [t]$, let $X_i := A'' \cap V(D_i)$. The rest of the proof consists in finding vertex-disjoint monochromatic cycles $\ca C_1, \dots, \ca C_t$ with each $\ca C_i$ covering the vertices of $X_i \setminus \bigcup_{j \in [i-1]} V(\ca C_i)$ (possibly together with some other vertices). Note that this effectively implies that $\ca C_1, \dots, \ca C_t$ jointly cover the entirety of $A''$ (i.e. all but at most $2r^{3kr} \leq r^{Kr}$ vertices of $A$, proving the lemma).
    
    We describe an iterative procedure to construct these cycles. Suppose that we have already constructed vertex-disjoint monochromatic cycles $\ca C_1, \dots, \ca C_{s-1}$ in $G$ for some $s \leq t$ such that the following conditions are satisfied. Letting $U := \bigcup_{i \in [s-1]} V(\ca C_i)$, we have $X_1, \dots, X_{s-1} \subseteq U$ and

    \begin{enumerate}[label = (\roman*)]
        \item\label{prop:intersectL'} $|U \cap L'_{c}(i,j)| \leq (s-1) \cdot r^{kr/4}$ for each $1 \leq i < j\leq \ell$ and $c \in [r]$;
        \item\label{prop:intersectTc} $|U \cap T| \leq (s-1) \cdot r^{kr/4}$ for each $i \in [\ell]$ and $T\in \ca T_i$.
    \end{enumerate}

    We will now show how to construct a monochromatic cycle $\ca C_s$ such that $X_s \setminus U \subseteq V(\ca C_s)$, all the while maintaining properties \ref{prop:intersectL'} and \ref{prop:intersectTc} with $s-1$ replaced by $s$. 

    First observe that, if $|X_s \setminus U| = 1$, then we can simply take the single vertex in $X_s \setminus U$ as a (degenerate) cycle in our final collection of cycles. This clearly preserves \ref{prop:intersectL'} and \ref{prop:intersectTc} and completes this step. So we may assume that $|X_s \setminus U| \geq 2$. Then $D_s$ is a monochromatic component (with colour $c_s$) on at least two vertices, which implies, by the definition of $R =R(X, \ca F)$, that each vertex $v \in X_s$ is incident with a $c_s$-coloured edge, whose other incident vertex must then belong to some $S_{i_v}$. In turn, this implies that $c_s \in C_{i_v}$ and, moreover, $\deg_{c_s}(v, S_{i_v}) \geq \frac{m}{10r}$. Thus, we can partition $X_s \setminus U$ into sets $F_1, \dots, F_\ell$ in such a way that each $v \in F_i$ satisfies $\deg_{c_s}(v, S_i) \geq 
    \frac{m}{10r}$ (note that some $F_i$ could be empty, for instance if $c_s \notin C_i$). 

    Since the set $U$ is obtained from the union of vertex-disjoint cycles in a bipartite graph on vertex classes $A$ and $B$, we have
    \[|U| \leq 2|A| \leq \frac{2n}{r^{40r}} \leq \frac{n}{r^{39r}}.\]

    Now let us consider the graphs $G_{c_s}[F_i, S_i \setminus U]$ for each $i \in [\ell]$ such that $F_i$ is non-empty. Both the $S_i$ and the $F_i$ are pairwise disjoint, and so the graphs $G_{c_s}[F_i, S_i \setminus U]$ share no vertices with each other. Moreover, for each $v \in F_i$ we have
    \[\deg_{c_s}(v, S_i \setminus U) \geq \frac{m}{10r} - \frac{n}{r^{39r}} \geq \frac{m}{20r}.\]
    Then we also have $m/(20r) \leq |S_i \setminus U| \leq  m$, which gives \[|F_i| \leq |A| \leq \frac{n}{r^{39r}} \leq \frac{n}{40000r^{18r}} \leq \frac{|S_i \setminus U|}{2000r^2},\] using the fact that $r \geq 2$ in the third inequality. So, we can apply \cref{cor:classiccover} to $G_{c_s}[F_i, S_i \setminus U]$ with $|S_i \setminus U|$ and $20r$ playing the roles of $n$ and $k$ to find at most $40r$ vertex-disjoint $c_s$-coloured cycles covering $F_i$. After having done this for each $G_{c_s}[F_i, S_i \setminus U]$, we obtain a collection of vertex-disjoint $c_s$-coloured cycles $\ca C'_1, \dots, \ca C'_p$ in $G[X_s \setminus U,Y \setminus U]$ covering $X_s \setminus U$, where
    \begin{equation}\label{eq:boundonp} p \leq 40r \cdot \ell \leq 40r \cdot r^{15r} \leq r^{20r}.\end{equation}

    It remains to `stitch' all the cycles $\ca C'_i$ together into a single cycle $\ca C_s$. For each $i \in [p]$, let $x_i, y_i \in V(\ca C'_i) \cap A$ be the two distinct neighbours of an arbitrarily chosen vertex of $V(\ca C'_i) \cap B$. Note that these exist unless $\ca C'_i$ is a degenerate cycle on at most two vertices. If this is the case and one of these vertices belongs to $A$, we let $x_i$ and $y_i$ be both equal to that vertex; otherwise, we simply ignore $\ca C'_i$ as it serves no purpose. 

    Now we join all the pairs $x_i, y_{i+1}$ (with $i+1$ being taken modulo $p$) using paths $P_i$ that are internally vertex-disjoint from each other and from $U \cup \bigcup_{i \in [p]} V(\ca C'_i)$. To be precise, suppose that we have already found paths $P_1, \dots, P_{p'-1}$ as desired for some $p' \leq p$. Further suppose that $|P_i| \leq \ell \cdot k^6 r^6$ for each $i \in [p'-1]$. We now show how to construct $P_{p'}$ while ensuring that $|P_{p'}| \leq \ell \cdot k^6 r^6$. Let
    \[N := \Big(U \cup \bigcup_{i \in [p]} V(\ca C'_i) \cup \bigcup_{i \in [p'-1]} V(P_i)\Big) \setminus \{x_{p'}, y_{p'+1}\},\]
    i.e. the set of vertices we need to avoid while building $P_{p'}$. First, observe that $N$ is spanned by a vertex-disjoint collection of cycles and at most $p' -1$ paths in $G$  (namely, the $\ca C _i$ for $i \in [s-1]$, the $\ca C'_i$ for $i \in [p]$, and the subpaths $P_i \setminus \{x_i, y_{i+1}\}$ for $i \in [p'-1]$) and so
    \begin{equation}\label{eq:1}|N \cap B| \leq |A| + 2(p'-1) \leq \frac{n}{r^{30r}},\end{equation}
    as $G$ is bipartite on $A \cup B$.
    Second, for each $j \in [\ell]$, each routing set $T \in \ca T_j$ is disjoint from $A''$ and from $Y$. In particular, no $\ca C'_i$ intersects $T$. Therefore,
    \begin{equation}\label{eq:2}
        |N \cap T| \leq |U \cap T| + \sum_{i \in [p'-1]}|V(P_i)| \leq (s-1) r^{kr/4} + p \ell k^6 r^6 \leq r^{kr/3} + r^{kr/3} = 2r^{kr/3}, 
    \end{equation}
    where we used \ref{prop:intersectTc} along with the crude bound $s-1 \leq t \leq tc_{r+1}(12\delta) \leq kr^2$ from the second part of \cref{thm:tree_cover_main} (and also using $p, \ell \leq r^{20r}$). Third, recall that for each choice of distinct $i, j \in [\ell]$ and $c \in [r]$, $L'_{c}(i,j)$ is a subset of $A$ disjoint from $A''$ and thus also from $\bigcup_{i' \in [p]}V(\ca C'_{i'})$, which yields
    \begin{equation}\label{eq:3}
        |N \cap L'_{c}(i,j)| \leq |U \cap L'_{c}(i,j)| + \sum_{i' \in [p'-1]} |V(P_{i'})| \leq (s-1)r^{kr/4} + p\ell k^6r^6 \leq 2r^{kr/3}, 
    \end{equation}
    where this time we used \ref{prop:intersectL'}. With these inequalities in hand, we can now apply \cref{lem:connecting_lemma} to $G[A,B]$ with $\ca F, A', N$ playing the roles of $\ca F, X,U$, with $x_{p'}$ and $y_{p'+1}$ playing the roles of $x$ and $y$, and with $c_s$ in place of $c$. For our numerical parameters, we take $r, \ell, r^{kr}, m, \eps$ to play the role of $r, \ell, t, m, \eps$. Indeed, $\ca F$ is a $(r, r^{kr}, m, \eps)$-linked hub family for $A'$, and $R = R(A', \ca F)$ contains a $c_s$-coloured path from $x_{p'}$ to $y_{p'+1}$ since they belong to the same $c_s$-coloured component $D_s$. Regarding the conditions on $N$, properties \ref{prop:intersecB'} and \ref{prop:intersecTc} immediately follow from \eqref{eq:1} and \eqref{eq:2}, whereas for \ref{prop:intersecL'} we have
    \[|L_c(i,j) \setminus N| \geq |L'_c(i,j) \setminus N| \geq r^{kr} - 2r^{kr/3} \geq r^{15r} \geq \ell,\]
    using \eqref{eq:3}. This verifies the assumptions of \cref{lem:connecting_lemma} and so we find a $c_s$-coloured path $P_{p'}$ in $G[A \setminus N, B \setminus N]$ of length at most $\ell \cdot \log^5 (r^{kr}) \leq \ell \cdot k^6 r^6$. This completes the construction of $P_{p'}$ while maintaining the bound on the length of each $P_i$ required in the next iteration.

    Recall that $y_i$ and $x_i$ have a coneighbour in $\ca C'_i$, and so if $W_i \subseteq \ca C'_i$ is the longest $y_i$--$x_i$ path in $\ca C'_i$, we have $V(\ca C'_i) \cap X_s \subseteq W_i$. After the procedure just described has been performed for each pair $x_i, y_{i+1}$, we obtain the $c_s$-coloured cycle
    \[\ca C_s = x_1 P_1 y_2 W_2 x_2  \dots x_p P_p y_1 W_1x_1,\]
    covering $X_s \setminus U$ and avoiding $U$. Observe that the only times we used vertices of $L_c(i,j)$ (for $1 \leq i < j \leq \ell$ and $c \in [r]$) or $T \in \ca T_i$ (for $i \in [\ell]$) was when constructing all the paths $P_{j}$. This shows that \ref{prop:intersectL'} is preserved, as after updating $U := \bigcup_{i\in [s]} V(\ca C_i)$ the increase in $|U \cap L_c'(i,j)|$ is given by an additive term of at most \[\sum_{j \in [p]}V(P_{j}) \leq p \ell r^6k^6 \leq  r^{kr/4}.\]
Similarly for \ref{prop:intersectTc}, the increase in $|U \cap T|$ for each $i \in [\ell]$ and $T \in \ca T_i$ is also at most $\sum_{j\in [p]} V(P_{j}) \leq r^{kr/4}$. 

    After repeating this procedure for each $s \in [t]$, we obtain a collection of vertex-disjoint monochromatic cycles $\ca C_1, \dots, \ca C_t$ with each $\ca C_i$ covering the elements of $X_i \setminus \bigcup_{j \in [i-1]} V(\ca C_j)$. Thus, $A'' = X_1 \cup \dots \cup X_t \subseteq V(\ca C_1) \cup \dots \cup V(\ca C_t)$. Since $t \leq tc_{r+1}(12\delta)$ and $|A''| \geq |A| - 2r^{3kr} \geq |A| - r^{Kr}$, this finishes the proof. 
\end{proof}

\subsection{Covering a very small leftover}\label{subsec:covering_few}

Our next lemma shows how to cover a leftover of $r^{\ca O(r)}$ vertices with few monochromatic cycles. It is the last ingredient we require for the proof of \cref{lem:absorption}, whose proof is given immediately afterwards and represents the end of \cref{sec:absorption}. 

\begin{lem}\label{lem:bounded_leftover} Let $1/n \ll 1/r, 1/K \leq 1/2$ and let $\delta \in (0, 1/4]$. Let $G$ be an $r$-edge-coloured bipartite graph on vertex classes $A$ and $B$ where $|B| = n$ and $|A| \leq r^{Kr}$. Suppose that $\deg(v, B) \geq (1-\delta)n$ for each $v \in A$. Then there is a collection of at most 
    \[r \left\lceil \frac{3K r \log r}{\log(1/\delta)} \right\rceil\]
    vertex-disjoint monochromatic cycles covering $A$.
\end{lem}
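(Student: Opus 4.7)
Setting $s := \lceil 3Kr\log r/\log(1/\delta) \rceil$, the goal is to cover $A$ with at most $rs$ vertex-disjoint monochromatic cycles. My plan is based first on a majority-colour partition: for each $v \in A$, let $c(v) \in \arg\max_c \deg_c(v, B)$, so that $\deg_{c(v)}(v, B) \ge (1 - \delta)n/r \ge n/(2r)$, and let $A_c := \{v : c(v) = c\}$. Applying \cref{cor:classiccover} to each $G_c[A_c, B]$ with $k := 2r$ (its hypothesis $|A_c| \le n/(20r^2)$ holding since $|A| \le r^{Kr}$ and $1/n \ll 1/r, 1/K$) covers each $A_c$ with at most $4r$ vertex-disjoint $c$-coloured cycles, yielding a total of at most $4r^2$ cycles.

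The one-shot partition bound of $4r^2$ already matches the desired $rs$ whenever $\delta \ge r^{-\Omega(K)}$, i.e., when $\delta$ is not too small compared to $r$. In the complementary regime, where $\delta$ is very small and $rs$ can be as low as $r$, a different argument is needed. The key observation is that for $\delta$ sufficiently small (say $\delta \le r^{-3Kr}$), $|A|\delta$ is tiny, so the common neighbourhood of $A$ in $B$ has size $(1-o(1))n$, and the restriction of $G$ to this common neighbourhood is essentially a complete bipartite graph. In this setting, a Gy\'arf\'as-style covering result for $r$-edge-coloured complete bipartite graphs with a highly unbalanced vertex partition yields a cover of $A$ by $\ca O(r)$ cycles.

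For the intermediate range of $\delta$ (between the one-shot threshold and the complete-bipartite threshold), I would use a combined argument: first iteratively extract monochromatic cycles covering large subsets of the uncovered set $A'$ via the partition approach (reducing $|A'|$ geometrically); then, once $|A'|\delta$ has become small enough, finish by applying the complete-bipartite argument. The main obstacle I anticipate lies in the bookkeeping: showing that the iteration terminates after $\Theta(s)$ steps (each using $\Theta(r)$ cycles) rather than $\Theta(r\log|A|)$ steps requires a sharper per-iteration guarantee than the naive partition plus \cref{cor:classiccover} argument. Concretely, one must exploit the fact that when $\delta$ is moderately small, each $v \in A$ has total degree close to $n$, so by averaging over colours and vertices one should be able to locate at each step a colour $c$ and a subset $X \subseteq A'$ of size $(1-\delta^{\Theta(1/r)})|A'|$ with common $c$-neighbourhood in $B$ of size at least $|X|$, enabling a single $c$-coloured cycle covering $X$; this delicate averaging, combining the deficiency bound $\sum_c(n-\deg_c(v,B)) \le (r-1+\delta)n$ with the smallness of $|A|$, is what I expect to be the technical heart of the proof.
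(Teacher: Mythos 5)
Your proposal gets the shape of the argument right (an iterative procedure shrinking the uncovered part of $A$ geometrically, with the arithmetic showing that $\Theta(r\log|A|/\log(1/\delta))$ cycles suffice), and your one-shot majority-colour bound of $4r^2$ cycles via \cref{cor:classiccover} is correct and does handle the regime $\delta \ge r^{-\Theta(K)}$. But the proof has a genuine gap exactly where you flag the ``technical heart'': the per-iteration guarantee you propose is both unproven and, as stated, false. You ask for a single colour $c$ and a set $X \subseteq A'$ with $|X| \ge (1-\delta^{\Theta(1/r)})|A'|$ whose \emph{common} $c$-neighbourhood in $B$ has size at least $|X|$. If every $v \in A'$ has its $(1-\delta)n$ neighbours split evenly among the $r$ colours, then no single colour can see more than roughly a $1/r$-fraction of $A'$ from any one vertex of $B$, and the common $c$-neighbourhood of a set of vertices each with $c$-degree about $n/r$ generically has size around $n/r^{|X|}$, so no averaging over the deficiencies $\sum_c(n-\deg_c(v,B))$ will produce such an $X$ of size $(1-o(1))|A'|$. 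The correct per-iteration statement is weaker but sufficient: $r$ cycles (one per colour) jointly covering a $(1-2\delta)$-fraction of $A'$.

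The missing idea that makes this work --- and that also underlies the ``Gy\'arf\'as-style covering result for unbalanced complete bipartite graphs'' you invoke for tiny $\delta$, which is not an off-the-shelf citable result --- is to classify the vertices of $B$ by their full coloured adjacency vector to $A$: for $u \in B$ let $\sigma^u \in [r+1]^{|A|}$ record, for each $a \in A$, the colour of $ua$ or the absence of the edge. Since $|A| \le r^{Kr}$ and $1/n \ll 1/r, 1/K$, after discarding the rare types one can pick $u \in B$ with $|N(u) \cap A'| \ge (1-2\delta)|A'|$ whose type class $B_{\sigma^u}$ still has at least $|A|$ members; then for each colour $i$ the pair $(N_i(u)\cap A',\, B_{\sigma^u})$ induces a \emph{complete $i$-coloured} bipartite graph, through which an $i$-coloured cycle covering $N_i(u)\cap A'$ can be threaded greedily. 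This is what lets each round of $r$ cycles shrink the leftover by a factor of $2\delta$, and it simultaneously disposes of your intermediate and small-$\delta$ regimes (for $\delta \le r^{-3Kr}$ a single round already leaves fewer than one vertex uncovered). Without this construction, or some substitute for it, the proposal does not constitute a proof.
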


\begin{proof} Let $A := \{v_1, \dots, v_m\}$ for some $m \leq r^{Kr}$. For each $u \in B$, we encode its adjacencies to vertices in $A$ (together with their colours) through a tuple $\sigma^u \in [r+1]^{m}$ by the following rule. For each $i \in [m]$, if $uv_i$ is an edge in $G$, then $\sigma^u_i$ is defined as the unique colour of $uv_i$ in $G$ (which is an element of $[r]$); if there is no edge $uv_i$ in $G$, then we let $\sigma^u_i := r+1$. 

For each $\bbold{x} \in [r+1]^{m}$, define 
\[B_{\bbold{x}} := \{v \in B: \sigma^v = \bbold{x}\}.\]
Now we remove from $B$ all the vertices belonging to the sets $B_{\bbold{x}}$ (for some $\bbold{x} \in [r+1]^{m}$) satisfying $|B_{\bbold{x}}| \leq m$, and call the resulting set $B'$. This set has size
\begin{equation}\label{eq:sizeofB0}|B'| \geq |B| - m \cdot (r+1)^{m} \geq \frac{n}{2},\end{equation}
since $m \leq r^{Kr}$ and $1/n \ll 1/K, 1/r$. Then for each $u\in B'$, we have $|B_{\sigma^u}| \geq m$ and $B_{\sigma^u} \subseteq B'$.

The total number of edges between $A$ and $B'$ is at least
\[|A| (|B'| - \delta n) \geq (1-2\delta)|A||B'|,\]
where we used \eqref{eq:sizeofB0}. By averaging, there is some vertex $u \in B'$ that is adjacent to at least $(1- 2\delta)|A|$ vertices $v \in A$. For each $i \in [r]$, the graph $G[B_{\sigma^u}, N_i(u)]$ is complete and $i$-coloured. Also, $|B_{\sigma^u}| \geq m$ as noted earlier. Taking each colour $i \in [r]$ in order, we can thus easily find an $i$-coloured cycle $C_i$ alternating between vertices of $N_i(u)$ and $B_{\sigma^u}$ and covering $N_i(u)$, all the while avoiding previously constructed cycles $C_1, \dots, C_{i-1}$. After going through all the colours in $[r]$, this yields a collection of vertex-disjoint cycles $C_1, \dots, C_r$ covering $|N(u)| \geq(1-2\delta)|A|$ vertices of $A$.

Now we iterate the argument just presented, thus obtaining a procedure which finds $r$ vertex-disjoint monochromatic cycles at each step and shrinks the leftover by a factor of $2\delta$. Indeed, if $A^* \subseteq A$ and $B^* \subseteq B'$ are the uncovered sets at any given step $i \geq 1$, we still have
\[|B^*| \geq |B'| - m \geq |B| - m \cdot (r+1)^m - m \geq \frac{n}{2}\]
since these sets are obtained by removing a collection of vertex-disjoint cycles from $G[A, B']$, which implies $|B' \setminus B^*| = |A \setminus A^*| \leq m$. Thus, each vertex of $A'$ has degree at least $(1-2\delta)|B^*|$ into $B^*$, and we can find a vertex $u \in B^*$ with $(1-2\delta)|A^*|$ neighbours in $A^*$. Finally, we always have enough vertices in $B^* \cap B_{\sigma^u}$ to build vertex-disjoint cycles since \[|B^* \cap B_{\sigma^u}| \geq |B_{\sigma^u}| - |B' \setminus B^*| = |B_{\sigma^u}| - |A \setminus A^*| \geq m - |A \setminus A^*| = |A^*|.\]

Let $\ell := \left\lceil \frac{3K r \log r}{\log(1/\delta)} \right\rceil$. After $\ell$ steps, we have a leftover $A^* \subseteq A$ of size at most
\[(2\delta)^\ell |A| \leq r^{Kr} \cdot \exp\left\{-\frac{\log(1/(2\delta))}{\log(1/\delta)} \cdot 3K r \log r\right\} \leq r^{Kr} \cdot \exp\left\{-\frac{1}{2} \cdot 3K r \log r\right\} < 1, \]
where in the second inequality we used $\delta \leq 1/4$ which yields $\log(1/(2\delta)) \geq \log(1/\delta)/2$. Therefore, $\ell$ steps are sufficient to cover the entirety of $A$. The total number of cycles we used is at most $r \cdot \ell$, as required by the statement of the lemma. \end{proof}

\begin{proof}[Proof of \cref{lem:absorption}]
    Introduce a new constant $K' \geq 1$ with
    \[ 1/K \ll 1/K' \ll 1.\]
    Let $G[A, B]$ be an $r$-edge-coloured graph with $|B| = n$ and $|A| \leq \frac{n}{r^{40r}}$ as in the statement of the lemma. Apply \cref{lem:allbutrOr} with $K'$ playing the role of $K$ and all other parameters playing their own roles to find a collection of at most $tc_{r+1}(12\delta)$ vertex-disjoint monochromatic cycles covering all vertices of $A$ aside from a subset $A' \subseteq A$ with $|A'| \leq r^{K'r}$. By \cref{lem:generalUB}, we can upper bound the number of cycles used here by
    \begin{equation*}\begin{split}tc_{r+1}(12\delta) \leq 100(r+1) \cdot \left\lceil \frac{r+1}{\log(1/(12\delta))} \log\left(1 + \frac{r+1}{\log(1/(12\delta))}  \right)  \right\rceil  \leq \frac{Kr}{2} \left\lceil \frac{r \log r}{\log(1/\delta)} \right\rceil.
    \end{split}\end{equation*}

    Let $B' \subseteq B$ be obtained by removing the vertices of the cycles found in the previous step. As we only removed cycles in a bipartite graph, we have $|B'| \geq |B| - |A| \geq n/2$ and so
    \[\deg(v, B') \geq |B'| - \delta n \geq (1 - 2\delta)|B'| \]
    for each $v \in A'$. So we can apply \cref{lem:bounded_leftover} to $G[A', B']$ with $|B'|, r, K', 2\delta $ playing the roles of $n, r, K, \delta$. This yields a cover of $A'$ by at most 
    \[r \left\lceil \frac{3K'r\log r}{\log(1/(2\delta))} \right\rceil \leq \frac{Kr}{2} \left\lceil \frac{r \log r}{\log(1/\delta)} \right\rceil\]
    vertex-disjoint monochromatic cycles. Since these cycles are also vertex-disjoint from the ones covering $A \setminus A'$, this completes the proof.\end{proof}

\section{Covering the reduced graph}\label{sec:reduced_graph}

In this section, we will show that, roughly speaking, any $r$-edge-coloured graph with high minimum degree admits a partition into monochromatic matchings contained in the union of a small number of monochromatic components. This result will be applied in our main proof to cover the reduced graph obtained by applying the regularity lemma. These matchings will then be converted to a small number of monochromatic cycles covering almost the entire host graph. 

In order for this almost-cover to combine well with the cycles obtained in the absorption step, we further need to cover a large portion of the reduced graph not just by matchings, but also by certain other subgraphs which carry crucial additional properties. Some of these subgraphs will be triangles, whereas others will be of the following form.

\begin{defn}
    A \define{barbell} is a graph obtained from two vertex-disjoint triangles $a_1a_2a_3$ and $b_1b_2b_3$ by adding a new vertex $c$ together with the edges $ca_1$ and $cb_1$ (see \cref{fig:barbell}). 
\end{defn}

Though triangles and barbells do not contain a perfect matching, they exhibit a perhaps even more useful property: if you blow them up in an approximately balanced way, the resulting graph has a perfect matching. This is encoded in the following definition. 

\begin{defn}[perfect $b$-matching]
    Let $b : V(G) \to \mathbb{Z}^{\geq 0}$ be a function on the vertices of a graph $G$. A \define{perfect $b$-matching} is a non-negative function $\omega: E(G) \to \mathbb{Z}^{\geq 0}$ such that $\sum_{w \in N(v)} \omega(wv) = b$ for each $v \in V(G)$. 
\end{defn}

The next two statements establish the abovementioned fact about perfect matchings in blow-ups of triangles and barbells. 

\begin{fact}\label{fact:trianglerobust}
    Let $n \geq 1$ be an integer. Let $a_1a_2a_3$ be a triangle. Let $b : \{a_1, a_2, a_3\} \to \mathbb{Z}^{\geq 0}$ be a function such that $3n/10 \leq b(a_1) \leq n$, $9n/10 \leq b(a_2), b(a_3) \leq n$, and each $b(a_i)$ is even. Then there is a perfect $b$-matching $\omega(\cdot)$ of $a_1a_2a_3$ such that $\omega(a_ia_{i+1}) \geq n/10$ for each $i \in [3]$. 
\end{fact}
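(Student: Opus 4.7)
The plan is completely computational: a triangle has three edges, so a perfect $b$-matching is just a triple of non-negative integers $x_{12}, x_{23}, x_{13}$ satisfying a $3 \times 3$ linear system, and the constraints force a unique candidate solution which we then need to check.

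First, I set $x_{ij} := \omega(a_ia_j)$. The perfect $b$-matching condition at each vertex gives the linear system $x_{12} + x_{13} = b(a_1)$, $x_{12} + x_{23} = b(a_2)$, $x_{13} + x_{23} = b(a_3)$. This system has the unique solution
\[ x_{12} = \tfrac{1}{2}(b(a_1) + b(a_2) - b(a_3)), \quad x_{13} = \tfrac{1}{2}(b(a_1) + b(a_3) - b(a_2)), \quad x_{23} = \tfrac{1}{2}(b(a_2) + b(a_3) - b(a_1)). \]
So it suffices to check that, under the hypotheses, each $x_{ij}$ is a non-negative integer that is at least $n/10$.

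Integrality is immediate since each $b(a_i)$ is even, making each numerator even. For the lower bounds, I plug in the hypothesized bounds $b(a_1) \geq 3n/10$, $b(a_2), b(a_3) \geq 9n/10$, and $b(a_i) \leq n$. This yields
\[ x_{12} \geq \tfrac{1}{2}(3n/10 + 9n/10 - n) = n/10, \qquad x_{13} \geq \tfrac{1}{2}(3n/10 + 9n/10 - n) = n/10, \]
\[ x_{23} \geq \tfrac{1}{2}(9n/10 + 9n/10 - n) = 2n/5 \geq n/10, \]
which also ensures non-negativity. This completes the argument; there is no real obstacle here, the main point is just that the balance condition $b(a_1) + b(a_2) \geq b(a_3) + n/5$ (and its cyclic analogues) is what drives the $n/10$ slack on each edge.
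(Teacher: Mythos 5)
Your proof is correct and is essentially identical to the paper's: both write down the unique solution $\omega(a_ia_{i+1}) = \tfrac{1}{2}(b(a_i)+b(a_{i+1})-b(a_{i+2}))$ of the vertex-degree system, use evenness of the $b(a_i)$ for integrality, and plug in the hypothesized bounds to get the $n/10$ lower bound on each edge.
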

\begin{proof}For each $i \in [3]$, define
    \[\omega(a_i a_{i+1}) := \frac{b(a_i)+ b(a_{i+1}) - b(a_{i+2})}{2} \geq \frac{n}{10},\]
    where indices are taken modulo $3$. Observe that $\omega(a_i a_{i+1})$ is an integer since each $b(v)$ is even. Furthermore, 
    \[\omega(a_ia_{i+1}) + \omega(a_{i+2} a_i) = \frac{b(a_i) + b(a_{i+1}) - b(a_{i+2})}{2} + \frac{b(a_{i+2}) + b(a_i) - b(a_{i+1})}{2} = b(a_i) \]
    for each $i \in [3]$.\end{proof}

\begin{lem}\label{fact:barbellrobust}
Let $n \geq 20$. Let $G$ be a barbell. Given any function $b : V(G) \to \mathbb{Z}^{\geq 0}$ such that $9n/10 \leq b(v) \leq n$ and $b(v)$ is even for each $v \in V(G)$, $G$ has a perfect $b$-matching $\omega(\cdot)$ such that $\omega(e) \geq n/10$ for each $e \in E(G)$.     
\end{lem}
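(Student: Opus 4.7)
The plan is to reduce the problem to two applications of the preceding \cref{fact:trianglerobust}, one for each triangle of the barbell, by correctly distributing the weight $b(c)$ between the two cut-edges $ca_1$ and $cb_1$.

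Concretely, I would set $\omega(ca_1) := x$ and $\omega(cb_1) := b(c) - x$ for a suitable even non-negative integer $x$, and then apply \cref{fact:trianglerobust} to the triangle $a_1a_2a_3$ with the modified function
\[b'(a_1) := b(a_1) - x,\quad b'(a_2) := b(a_2),\quad b'(a_3) := b(a_3),\]
and symmetrically to $b_1b_2b_3$ with $b''(b_1) := b(b_1) - (b(c) - x)$, $b''(b_j) := b(b_j)$ for $j \in \{2,3\}$. Provided the hypotheses of \cref{fact:trianglerobust} hold for $b'$ and $b''$ (with the same $n$), gluing the two resulting weightings with $\omega(ca_1), \omega(cb_1)$ produces a perfect $b$-matching of $G$. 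To make every edge weight at least $n/10$, it is enough to ensure $x \ge n/10$ and $b(c) - x \ge n/10$, since \cref{fact:trianglerobust} already guarantees the lower bound $n/10$ on each triangle edge.

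Thus the only task is to locate an even integer $x$ that meets the following constraints simultaneously: (i) $n/10 \le x \le b(c) - n/10$ (so both cut-edges have weight at least $n/10$); (ii) $3n/10 \le b(a_1) - x \le n$ (so $b'(a_1)$ is within the allowed range for \cref{fact:trianglerobust}); and (iii) $3n/10 \le b(b_1) - b(c) + x \le n$ (the analogue for $b''(b_1)$). Using $9n/10 \le b(a_1), b(b_1), b(c) \le n$, a direct computation shows that the lower envelope of these constraints is at most $2n/5$ and the upper envelope is at least $3n/5$, giving a feasible interval of length at least $n/5 \ge 4$. Since $b(a_1), b(b_1), b(c)$ are all even, the parity of $b'(a_1)$ and $b''(b_1)$ matches the parity of $x$, so choosing any even $x$ in the feasible interval produces the required even values; the interval length being at least $4$ guarantees such an even $x$ exists because $n \ge 20$.

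The only real obstacle is that, unlike the triangle case, here the weight at the cut-vertex $c$ is distributed between two edges subject to coupled two-sided inequalities on opposite sides of the barbell, so the feasibility window could in principle be empty. The computation in the previous paragraph is precisely what rules this out and explains the assumption $n \ge 20$. Once an even $x$ in the feasible interval is fixed, the rest of the proof is a direct invocation of \cref{fact:trianglerobust} on each triangle, so no further work is needed.
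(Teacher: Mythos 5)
Your proof is correct and follows essentially the same route as the paper: split $b(c)$ across the two bridge edges and apply \cref{fact:trianglerobust} to each triangle separately with the weight at $a_1$ (resp.\ $b_1$) reduced accordingly. The only difference is that the paper takes the explicit near-equal split $\omega(ca_1)=2\lceil b(c)/4\rceil$, $\omega(cb_1)=2\lfloor b(c)/4\rfloor$ rather than arguing via a feasibility interval for $x$, which makes the verification slightly shorter but is otherwise the same idea.
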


\begin{proof}
Suppose that $G$ consists of triangles $a_1a_2a_3$ and $b_1b_2b_3$, together with another vertex $c$ adjacent to $a_1$ and $b_1$. First, we define
    \[\omega(c a_1) := 2 \left\lceil  \frac{b(c)}{4}\right\rceil, \hspace{1cm} \omega(c b_1) := 2 \left\lfloor \frac{b(c)}{4}\right\rfloor; \]
    thus, $\omega(ca_1) + \omega(cb_1) = b(c)$ since $b(c)$ is even. Also, $\omega(ca_1), \omega(cb_1) \geq n/10$ since $n \geq 20$. 
    
    Define a new weighting $b'$ of $a_1a_2a_3$ as $b'(a_1) := b(a_1) - \omega(ca_1)$ and $b'(a_i) := b(a_i)$ for $i = 2,3$. Observe that $b'(a_i)$ is even for each $i \in [3]$ and, moreover, $b'(a_1) \geq b(a_1) - n/2 - 2\geq 3n/10$ since $n \geq 20$. By \cref{fact:trianglerobust}, $a_1a_2a_3$ has a perfect $b'$-matching $\omega'(\cdot)$ such that $\omega'(a_ia_{i+1}) \geq n/10$ for each $i \in[3]$. We now apply the same argument to obtain a perfect $b''$-matching $\omega'':\{b_1b_2, b_2b_3, b_3b_1\} \to \mathbb{Z}^{\geq 0}$ with $\omega''(b_ib_{i+1}) \geq n/10$, where $b''(b_1) := b(b_1) - \omega(cb_1)$ and $b''(b_i):= b(b_i)$ for $i = 2,3$. 
    
    Finally, we let $\omega(a_ia_{i+1}) := \omega'(a_ia_{i+1})$ and $\omega(b_ib_{i+1}) := \omega''(b_ib_{i+1})$ for each $i \in [3]$. This is a perfect $b$-matching since $\omega(a_1a_2) + \omega(a_1 a_3) + \omega(a_1c) = b''(a_1) + \omega(ca_1) = b(a_1)$ (and the analogous property holds for $b_1$). \end{proof}

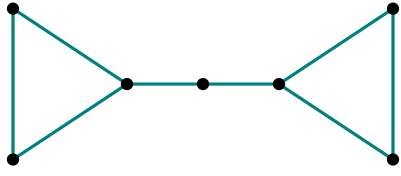
\begin{figure}[t]
    \centering
    \begin{tikzpicture}
        \tkzDefPoint(1,4){v_1}
        \tkzDefPoint(1,2){v_2}
        \tkzDefPoint(2.5,3){v_3}
        \tkzDefPoint(3.5,3){v_4}
        \tkzDefPoint(4.5,3){v_5}
        \tkzDefPoint(6,2){v_6}
        \tkzDefPoint(6,4){v_7}
        
        \tkzDrawPolygon[teal, very thick](v_1,v_2,v_3)
        \tkzDrawSegments[teal, very thick](v_3,v_4 v_4,v_5)
        \tkzDrawPolygon[teal,  very thick](v_5,v_6,v_7)
        \tkzDrawPoints[ultra thick](v_1,v_2,v_3,v_4, v_5, v_6, v_7)
        
    \end{tikzpicture}
    \caption{A barbell.}
    \label{fig:barbell}
\end{figure}

Now we turn to our main lemma for covering the reduced graph, whose proof occupies the remainder of this section. 

\begin{lem}\label{lem:reducedgraphcover}
    Let $1/n \ll 1/r, 1/K \ll 1$ and $\delta \in [e^{-r/10}, 1/K)$. Any $n$-vertex $r$-edge-coloured graph $G$ with $\delta(G) \geq (1-\delta)n$ contains a set $S \subseteq V(G)$ with $|S| \geq \frac{n}{10^{22}\log^3(1/\delta)}$ such that
    \begin{enumerate}[label = (\arabic*)]
        \item $G[S]$ has a spanning collection of vertex-disjoint triangles and barbells contained in the union of at most $6r\log r + 61tc_{r+1}(\delta^{1/2})$ monochromatic components of $G[S]$, and
        \item $G-S$ has a perfect matching contained in the union of at most $11r \log r$ monochromatic components of $G$.
    \end{enumerate}
\end{lem}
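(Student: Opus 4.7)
The proof follows the three-step outline given in \cref{sec:overview}: first construct a large family of vertex-disjoint monochromatic triangles in $G$, then cover most of the remaining vertices with a monochromatic matching, and finally absorb any leftover vertices by converting a few triangles into barbells.

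For the triangle step, I would apply \cref{lem:generalUB} to an auxiliary $(r{+}1)$-edge-coloured graph $G'$ on $V(G)$ constructed so that two vertices $u,v$ are joined by a colour-$c$ edge (for $c\in[r]$) precisely when $uv\in E(G_c)$ and $|N_{G_c}(u)\cap N_{G_c}(v)|\ge (1-2\delta^{1/2})n$, with the remaining pairs coloured $r{+}1$. A double-counting argument using $\delta(G)\ge(1-\delta)n$ shows that all but $\ca O(\delta^{1/2} n)$ vertices satisfy $\deg_{G'}(v)\ge(1-\delta^{1/2})|V(G')|$; after discarding this exceptional set, \cref{lem:generalUB} produces a cover of $G'$ by $tc_{r+1}(\delta^{1/2})$ monochromatic trees. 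From each tree I would extract a large monochromatic matching (e.g.\ via König-type arguments applied to a bipartite surrogate) and then extend every matched edge $uv$ of colour $c$ to a monochromatic triangle $uvw$ by greedily choosing $w\in N_{G_c}(u)\cap N_{G_c}(v)$ among the still-unused vertices. The coneighbourhood threshold baked into $G'$ ensures that enough candidates remain throughout the greedy process. This yields a triangle family $\mathcal T$ contained in $\ca O(tc_{r+1}(\delta^{1/2}))$ monochromatic components of $G$ and covering a $(1-\ca O(\delta^{1/2}))$-fraction of $V(G)$.

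For the matching step, I would apply a standard iterative greedy argument on $G-V(\mathcal T)$: repeatedly select a largest monochromatic component, extract a monochromatic matching covering a constant fraction of it, remove the matched vertices and iterate. A short calculation analogous to the $\ca O(r\log r)$ bound of \cite{GYARFAS2006855} shows that this halts after at most $11 r\log r$ iterations, leaving a leftover $V^\star$ of negligible size each of whose vertices retains almost all of its original neighbourhood in $V(\mathcal T)$. For the barbell step, for each $v\in V^\star$ I would use $\deg(v)\ge(1-\delta)n$ together with pigeonhole over the $r$ colours to find some $c\in[r]$ and two distinct triangles $T_1,T_2\in\mathcal T$ such that $v$ has a $c$-neighbour in each, then swap $T_1,T_2$ out of $\mathcal T$ and replace them by the barbell formed by joining them through $v$. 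Finally, set $S$ to be the vertex set of the resulting triangles and barbells, so that $V(G)\setminus S$ is perfectly covered by the matching.

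The main obstacle is the triangle step: choosing the coneighbourhood threshold (here $\delta^{1/2}$) is a balancing act---too strict and $G'$ loses too much minimum degree to benefit from \cref{lem:generalUB}, too loose and the greedy triangle extension runs out of candidate vertices. A secondary but fiddly difficulty is the component-count bookkeeping: each stage contributes to the final totals, and staying within $6r\log r + 61\,tc_{r+1}(\delta^{1/2})$ and $11r\log r$ requires carefully tracking how a $G'$-component translates back into a $G$-component, how extracting matchings from trees redistributes them across colours, and how each barbell inherits its components from its two constituent triangles plus the single extra connecting vertex. The lower bound $|S|\ge n/(10^{22}\log^3(1/\delta))$ should then drop out by plugging the upper bound on $tc_{r+1}(\delta^{1/2})$ from \cref{lem:generalUB} into the vertex-coverage estimate for $\mathcal T$.
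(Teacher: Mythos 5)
Your three-step outline (triangles, then a greedy matching, then barbells absorbing the leftover) matches the paper's strategy, but two of the three steps have genuine gaps as you describe them.

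The triangle step does not work as proposed. First, in your auxiliary graph $G'$ the colour-$(r+1)$ edges form one enormous component, so a tree cover of $G'$ may consist entirely (or mostly) of colour-$(r+1)$ trees, which carry no information about monochromatic structure in $G$; unlike the paper's use of a dummy colour in \cref{lem:cherries}, you have no mechanism forcing the vertices you want to triangulate to be incident only with genuinely coloured edges. Second, and independently, a monochromatic tree need not contain a large matching --- a star covers many vertices but has matching number $1$ --- so ``extract a large monochromatic matching from each tree'' fails, and no K\H{o}nig-type argument repairs this. The paper's \cref{lem:triangles} takes a different route: after deleting all colour classes of degree below $n/(10^4 r)$ at each vertex (so that $G$ has at most $10^5 r^2$ nontrivial monochromatic components), it applies Corr\'adi--Hajnal to find $n/6$ vertex-disjoint triangles of \emph{arbitrary} colouring, partitions the components into $\lceil 10^6\log(1/\delta)\rceil$ groups, and pigeonholes over the $\ell^3$ triples of groups to find $\ge n/(10^{22}\log^3(1/\delta))$ triangles whose edges lie in at most $r^2/\log(1/\delta)$ components. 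This triple-pigeonhole is exactly where the $\log^3(1/\delta)$ in the size bound comes from; your proposal has no source for it. (A further subtlety: components of $G$ restricted to $S$ can shatter, so the paper reserves regular ``reservoir'' sets to guarantee the components survive inside $G[S]$.)

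The barbell step also has a fatal counting gap. Pigeonholing a colour $c$ for each leftover vertex $v$ bounds the number of \emph{colours} but not the number of monochromatic \emph{components}: distinct leftover vertices can have their connecting cherries in pairwise different $c$-coloured components, contributing up to $|V^\star|$ components rather than $O(r\log r + tc_{r+1}(\delta^{1/2}))$. This is precisely why the paper proves \cref{lem:cherries}, a nontrivial lemma combining two iterated Erd\H{o}s--Gallai arguments (one for $\delta \ge 1/(12r)$, one with a doubly-exponential decay for smaller $\delta$) with a tree-cover application to an auxiliary $(r+1)$-coloured graph, to place all cherries into at most $6r\log r + tc_{r+1}(6\delta)$ components. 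Your matching step, by contrast, is essentially the paper's \cref{clm:greedycover} and is fine --- though note it must be run on the large complement of the (small) triangle set, not on a tiny leftover where $G[V^\star]$ could be edgeless.
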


For the proof of \cref{lem:reducedgraphcover}, we will proceed in the following order: 

\begin{enumerate}[label = (\roman*)]
    \item construct a large family of vertex-disjoint triangles $T_1, \dots, T_k$ in our graph $G$;
    \item show that $V(G) \setminus \bigcup_{i \in [k]} V( T_i)$ can be approximately covered by matchings; and
    \item\label{step3} using some of the triangles, construct a family of vertex-disjoint barbells covering the leftover of the previous two steps.
\end{enumerate}
The steps will need to be carried out while ensuring that the edges used belong to a small number of monochromatic components, which is the main obstacle. Rather than executing the whole strategy at once, we begin with two separate lemmas: \cref{lem:triangles} shows how to find many suitable triangles, whereas \cref{lem:cherries} is used in \ref{step3} to absorb leftover vertices into barbells. 

We require the following classical result.

\begin{thm}[Corr\'adi-Hajnal \cite{corradihajnal}]\label{thm:corradihajnal}
    Any $n$-vertex graph $G$ with $\delta(G) \geq \frac{2n}{3}$ contains $\lfloor n/3\rfloor$ vertex-disjoint triangles.
\end{thm}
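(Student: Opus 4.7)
My plan is to build $S$ in three stages, following the outline from \cref{sec:overview}.

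\textbf{Stage 1: triangle family.} I would first establish a separate triangle lemma (call it \cref{lem:triangles}) asserting that any $r$-edge-coloured graph of minimum degree at least $(1-\delta)|V|$ contains a vertex-disjoint family $\mathcal T$ of triangles spanning a subset $V_1$ with $|V_1| = \Omega\!\left(n/\log^3(1/\delta)\right)$, such that every edge of these triangles lies in one of at most $60\,tc_{r+1}(\delta^{1/2})$ monochromatic components. The proof of that lemma would use the hypergraph transversal viewpoint of \cref{sec:tree_covering} applied to an auxiliary edge-coloured graph built on common neighbourhoods; this naturally invokes $tc_{r+1}$ (the extra colour corresponding to the auxiliary ``codegree'' edges, mirroring the role of colour $r+1$ in \cref{sec:absorption}), and the loss from $\delta$ to $\delta^{1/2}$ arises because a Cauchy--Schwarz/averaging step moves the degree condition to pairs. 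Apply this lemma to $G$ to obtain $V_1 \supseteq \bigcup_{T \in \mathcal T} V(T)$.

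\textbf{Stage 2: near-perfect matching.} Set $V_2 := V(G) \setminus V_1$; since $|V_1|$ is a small linear-in-$n$ quantity and $\delta \leq 1/K$, the minimum degree of $G[V_2]$ is still at least $(1-2\delta)|V_2|$. I iterate the following greedy step: while the currently uncovered set $V'$ is large, pick a colour $c$ such that $G_c[V']$ contains a matching saturating at least a $\left(1 - 1/(2r)\right)$-fraction of $V'$. Such a matching exists because, by averaging, some colour induces an edge-density of $\Omega(1/r)$ on $V'$, and a dense monochromatic graph has such a near-spanning matching (e.g.\ by a Vizing/Erd\H{o}s--Gallai argument). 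Each matching lies in a single monochromatic component, so after $11 r \log r$ iterations the uncovered set $L \subseteq V_2$ has size at most $(1 - 1/(2r))^{11 r \log r}|V_2| \leq n/r^{5}$, which is far smaller than $|V_1|/6$ because $1/r \ll 1$. Let $M$ be the resulting matching.

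\textbf{Stage 3: barbell absorption.} For each $v \in L$, I want to absorb $v$ as the centre of a barbell built from two triangles of $\mathcal T$. This is where a cherries lemma (call it \cref{lem:cherries}) comes in, asserting that if $|L|$ is small compared to $|\mathcal T|$ and every $v \in L$ has the inherited high-degree condition into $V_1$, then one can find a vertex-disjoint family of paths of length two, each centred at a distinct $v \in L$ with endpoints in two \emph{distinct} triangles of $\mathcal T$, using at most $6r\log r + tc_{r+1}(\delta^{1/2})$ monochromatic components. Its proof is analogous to that of \cref{lem:triangles}: partition $L$ by colour of the desired cherry edges via a greedy/averaging argument (contributing $6r \log r$ components) and then apply the tree-cover bound of \cref{thm:tree_cover_main} to an auxiliary graph whose edges encode allowable (vertex, triangle) incidences. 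Once the cherries are found, define $S := V_1 \cup L$; the triangles of $\mathcal T$ not consumed by any barbell, together with the $|L|$ new barbells, span $G[S]$ by vertex-disjoint triangles and barbells using in total $60\,tc_{r+1}(\delta^{1/2}) + 6r\log r + tc_{r+1}(\delta^{1/2}) = 6r\log r + 61\,tc_{r+1}(\delta^{1/2})$ components; the matching $M$ is perfect on $G - S$.

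The main obstacle is Stage 1: producing many vertex-disjoint triangles whose edges collectively lie in very few monochromatic components. Dense triangle families are easy (\cref{thm:corradihajnal}), but the component bound requires organising the triangles inside a tree cover structure and paying a square-root loss in $\delta$ to survive the degree concentration. Stage 3 is technically similar but easier once one accepts the $r \log r$ colour split that parallels Stage 2. A small parity adjustment of $|S|$ (moving one vertex between $L$ and the matching) may be needed to ensure $|G-S|$ is even, but this costs nothing in components.
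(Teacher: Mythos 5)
Your proposal does not address the statement in question. The statement is the classical Corr\'adi--Hajnal theorem: every $n$-vertex (uncoloured) graph with minimum degree at least $2n/3$ contains $\lfloor n/3 \rfloor$ vertex-disjoint triangles. This is a result from 1963 that the paper cites from \cite{corradihajnal} and does not prove; it is used as a black box inside the proof of \cref{lem:triangles}. What you have written instead is a proof sketch of \cref{lem:reducedgraphcover} --- the three-stage construction of the set $S$ spanned by triangles and barbells, with the component counts $6r\log r + 61\,tc_{r+1}(\delta^{1/2})$ and $11r\log r$ --- which is an entirely different statement.

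Worse, your Stage 1 explicitly invokes \cref{thm:corradihajnal} (``Dense triangle families are easy''), so if this were offered as a proof of the Corr\'adi--Hajnal theorem it would be circular. As it stands there is no argument here for the claimed extremal bound $\lfloor n/3\rfloor$: nothing in your write-up explains why a minimum degree of $2n/3$ forces a triangle factor (when $3 \mid n$), which is the actual content of the theorem and requires a genuine extremal argument (the standard proofs proceed by taking a maximum family of vertex-disjoint triangles plus a maximum matching on the rest and deriving a contradiction from a careful degree count, or by induction on $n$). You would need to supply such an argument, or simply cite the result as the paper does.
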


\begin{lem}\label{lem:triangles} Let $1/n \ll \gamma, 1/r$ and $\delta \in [e^{-r/10}, 1/4]$. Let $G$ be an $n$-vertex $r$-edge-coloured graph with $\delta(G) \geq \frac{3n}{4}$. Then there is a set $S \subseteq V(G)$ with $|S| \geq \frac{n }{10^{22}\log^3(1/\delta)}$ satisfying the following property. In $G[S]$, there is a collection of at most $r^2/\log(1/\delta)$ monochromatic components whose union contains at least
    \((1- \gamma) |S|/3\) vertex-disjoint triangles.
\end{lem}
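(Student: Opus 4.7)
The plan is to combine the Corr\'adi--Hajnal theorem with a careful component-selection argument that exploits the sharp tree-cover bounds from \cref{thm:tree_cover_main}.

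First, apply \cref{thm:corradihajnal} to $G$, whose minimum degree is at least $3n/4 \geq 2n/3$, to obtain a collection $\mathcal{T}$ of $\lfloor n/3\rfloor$ vertex-disjoint triangles in $G$. Each triangle $T \in \mathcal{T}$ has three edges, each lying in some monochromatic component of $G$; assign to $T$ the \emph{signature} formed by the (at most three) monochromatic components of $G$ containing its edges.

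Next comes the core of the argument: identify a collection $\mathcal{M}$ of at most $r^2/\log(1/\delta)$ monochromatic components of $G$ whose union contains all three edges of at least a $1/(10^{22}\log^3(1/\delta))$-fraction of the triangles in $\mathcal{T}$. The idea is to introduce an auxiliary $3$-uniform hypergraph whose hyperedges encode the signatures of the triangles of $\mathcal{T}$, and to run a transversal-style argument on it. The ambient connectivity hypergraph $\mathcal{C}(G)$ is $(1/4)$-intersecting (by \cref{fact:intersectingaux}), and after passing to triangle signatures one can iterate a greedy-covering step analogous to the proof of \cref{lem:UBtreecover}, extracting the $\log(1/\delta)$ savings available in \cref{thm:tree_cover_main} (in particular via \cref{lem:simpleUB} applied in an appropriately dense regime on the auxiliary hypergraph). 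This yields the desired small collection $\mathcal{M}$ of components capturing many triangles.

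Finally, let $S$ be the union of the vertex sets of the triangles fully covered by $\mathcal{M}$. Then $|S| \geq n/(10^{22}\log^3(1/\delta))$, and the triangles are vertex-disjoint in $G[S]$ with all their edges lying in the union of at most $r^2/\log(1/\delta)$ monochromatic components of $G[S]$ (a single monochromatic component of $G$ may split into several components of $G[S]$ upon restriction, but the count is still controlled up to constants absorbable by the generous $10^{22}$-factor in the size bound). A small perturbation of the selected set, discarding a $\gamma$-fraction, allows one to slightly shrink $S$ to accommodate the $(1-\gamma)|S|/3$ triangle count.

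The main obstacle is obtaining the sharp $r^2/\log(1/\delta)$ bound on the number of components rather than the trivial $O(r^2)$ from naively applying $tc_r(1/4) = O(r^2)$. This requires leveraging the $\delta$-intersecting structure of the connectivity hypergraph in tandem with the $\log(1/\delta)$-improvement of \cref{lem:simpleUB}, rather than relying on generic pigeonhole over signature triples, which would only yield a bound of order $O(r^6)$.
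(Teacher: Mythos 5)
There are two genuine gaps here.

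First, the core step — selecting at most $r^2/\log(1/\delta)$ monochromatic components whose union contains \emph{all three edges} of a $\Omega(1/\log^3(1/\delta))$-fraction of the triangles — is only gestured at, and the route you sketch does not obviously work. The tree-cover/transversal machinery (\cref{fact:intersectingaux}, \cref{lem:simpleUB}, \cref{lem:UBtreecover}) bounds the number of components needed to touch every \emph{vertex}; a triangle whose vertices are all covered can still have its three edges lying in three components outside the chosen transversal, so no direct reduction to $\tau(\ca C(G))$ gives what you need. The paper does something different and more elementary: it first runs a cleaning step, deleting all $c$-coloured edges at any vertex $v$ with $\deg_c(v)\le n/(10^4 r)$, so that after applying the regularity lemma every nontrivial monochromatic component has linear size and hence there are at most $10^5 r^2$ of them in total. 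It then partitions these components into $\ell=\lceil 10^6\log(1/\delta)\rceil$ groups of size about $r^2/(3\log(1/\delta))$ each, assigns to every triangle the triple of groups containing its three edges, and pigeonholes over the $\ell^3$ triples. This is where both the $r^2/\log(1/\delta)$ component count and the $\log^3(1/\delta)$ loss in $|S|$ come from. Without the cleaning step, the number of monochromatic components meeting your triangles can be as large as $\Omega(n)$, and no pigeonhole or greedy selection over a bounded family is available.

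Second, your treatment of the restriction to $G[S]$ is incorrect. You claim that a component of $G$ splitting into several components of $G[S]$ is "controlled up to constants absorbable by the $10^{22}$-factor in the size bound" — but the $10^{22}$ sits in the bound on $|S|$, not in the component count, which must be exactly $r^2/\log(1/\delta)$; and in any case a single monochromatic component of $G$ can shatter into arbitrarily many components of $G[S]$ (e.g.\ a monochromatic star centred outside $S$). The paper avoids this by reserving random subsets $U_i$ of the regularity clusters, proving that any two vertices in the same $c$-coloured component of $G'$ are joined by a $c$-coloured path with internal vertices in $\bigcup_i U_i$, and then defining $S$ to be the triangle vertices \emph{together with} $\bigcup_i U_i$ (which costs only $\eps n\le\gamma|S|$ and accounts for the $(1-\gamma)$ factor). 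Your proposal has no analogue of this connectivity-preservation mechanism, so the claimed bound on the number of components of $G[S]$ does not follow.
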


\begin{proof} 

Let us start by iteratively deleting all $c$-coloured edges incident with $v$ for each $c \in [r]$ and $v \in V(G)$ with $\deg_c(v) \leq \frac{n}{10^4r}$. That is, we perform this step over all possible pairs $(v,c)$ until it is no longer possible, and let $G^*$ be the resulting graph. Note that the total number of steps is bounded above by $|V(G) \times [r]| = rn$, with no more than $\frac{n}{10^4r}$ edges being deleted in any given step, and thus the total number of deleted edges is at most $rn \cdot \frac{n}{10^4r} = \frac{n^2}{10^4}$. 

Let $X \subseteq V(G)$ be the set of vertices with $\deg_{G^*}(v) \leq \deg_G(v) - \frac{n}{60}$, and note that $|X| \leq \frac{n}{60}$, as otherwise the total number of deleted edges would be at least
\[\frac{|X|n}{120} \geq \frac{n^2}{7200}, \]
giving a contradiction.

Let $d := 1/(10^7r^2)$. Also, introduce new constants $\eps > 0$ and $M \geq 1$ with updated hierarchy being
\[1/n \ll M \ll \eps \ll \gamma, 1/r.\]
Now we apply the regularity lemma (\cref{lem:regularitylemma}) to $G^*$ with all constants playing their own role, thus obtaining a partition $\{V_0, \dots, V_t\}$ and a subgraph $G'$ of $G^*$ satisfying properties \ref{prop:reg1}--\ref{prop:reg5}. Let $m := |V_1| = \dots |V_t|$, so that $m \geq (1- \eps)n/t$.

For each $i \in [t]$, let $U_i \subseteq V_i$ be a uniformly chosen random subset on $m' := \eps^{1/2} m$ vertices (and independently of other $j \in [t])$. By Chernoff's bound (\cref{lem:chernoff}), with probability at least $1 - rnt e^{-\sqrt{n}} > 0$, we have 
\begin{equation}\label{eq:degree_to_rand}\deg_{G'_c}(v, U_i) \geq \left( \frac{\deg_{G'_c}(v, V_i) }{m} - \eps  \right)m' , \end{equation}
for each $v \in V(G), c \in [r]$ and $i \in [t]$. Let us fix a choice of each $U_i$ so that this property is satisfied. 

Note that $\deg_{G^*_c}(v) > 0$ implies $\deg_{G^*_c}(v) \geq \frac{n}{10^4r}$ for each $v \in V(G)$ and $c \in [r]$. Thus, using property \ref{prop:reg3}, each vertex $v \in V(G)$ with $\deg_{G'_c}(v) > 0$ satisfies
\begin{equation}\label{eq:degc}\deg_{G'_c}(v) \geq \deg_{G^*_c}(v) - (rd+\eps)n \geq  \frac{n}{10^4 r}- (rd + \eps)n \geq \frac{n}{10^5 r}.\end{equation}
 So, all nontrivial monochromatic components of $G'$ have size at least $n/(10^5r)$, thus implying that $G'$ contains no more than $10^5r^2$ nontrivial monochromatic components in total.  

Given not necessarily distinct $i,j \in [t]$, suppose that $x \in V_i$ and $y \in V_j$ are connected by a $c$-coloured path in $G'$. Then $V_i$ and $V_j$ belong to the same $c$-coloured component in $R(G)$. Using \eqref{eq:degc}, it follows by averaging that there exists $i' \in [t]$ with \[\deg_{G'_c}(x, V_{i'}) \geq \frac{m}{10^5r}.\] Again by \eqref{eq:degc}, there exists $j' \in [t] \setminus \{i'\}$ with \[\deg_{G'_c}(y, V_{j'}) \geq t^{-1}(n/(10^5r) - m) \geq \frac{m}{10^6r}.\]
Observe that $R(G)$ contains the $c$-coloured edges $V_i V_{i'}$ and $V_j V_{j'}$. Since $V_i$ and $V_{j}$ are in the same $c$-coloured component in $R(G)$, so are $V_{i'}$ and $V_{j'}$. Therefore, there exists a $c$-coloured path $V_{i'} V_{a_1} \dots V_{a_k} V_{j'}$ in $R(G)$ (which is possible since $i' \neq j'$).

By \cref{lem:basicfacts}, any consecutive pair of sets in $U_{i'}  U_{a_1} \dots U_{a_k} U_{j'}$ forms a $\eps^{1/2}$-regular pair of density at least $d/2$. By \eqref{eq:degree_to_rand}, we have 
\[\deg_{G'_c}(x, U_{i'}), \deg_{G'_c}(y, U_{j'}) \geq  \frac{m'}{10^7 r}.\]
So we can apply \cref{lem:reg_connecting_lemma} to conclude that
\begin{enumerate}[label = (P)]
    \item\label{prop:connectivityinreservoir} for each $c \in [r]$, any vertices $x, y \in V(G')$ that belong to the same $c$-coloured component in  $G'$ are connected by a $c$-coloured path in $G'$ whose internal vertices lie in $\bigcup_{i \in [t]} U_i$. 
\end{enumerate}

By property \ref{prop:reg3} and the fact that $|X| \leq \frac{n}{60}$, we have
\[\delta(G' - X) \geq \frac{3n}{4} - (rd + \eps) n - \frac{n}{60} > \frac{2n}{3}.\]
The Corr\'adi-Hajnal theorem (\cref{thm:corradihajnal}) now tells us that $G' - X$ contains at least \(\lfloor |V(G') \setminus X|/3 \rfloor \geq \frac{n}{6}\) vertex-disjoint triangles. Let us enumerate these triangles arbitrarily and refer to the vertices of the $i$th triangle as $a_i, b_i, c_i \in V(G') \setminus X$, where $i$ takes values in $[q]$ for some $q \geq n/6$. 

Let $C_1, \dots, C_p$ be the nontrivial monochromatic components of $G'$, and recall that $p \leq 10^5r^2$. Now define $\ell := \lceil 10^6\log(1/\delta)\rceil$ and let us partition $\{C_1, \dots, C_p\}$ arbitrarily into disjoint sets $\ca C_1, \dots \ca C_\ell$ with $\lfloor p/ \ell \rfloor \leq |\ca C_1| \leq \dots \leq |\ca C_\ell | \leq \lceil p/\ell \rceil $. In particular, we have
\[|\ca C_i| \leq \lceil p/\ell\rceil \le \left\lceil \frac{r^2}{10 \log(1/\delta)} \right\rceil \leq \frac{r^2}{3\log(1/\delta)}, \]
where in the last inequality we used the fact that $\delta \geq e^{-r/10}$.

Observe that for each $i \in [q]$, the edges of the triangle $a_ib_ic_i$ belong to monochromatic components in $\{ C_1, \dots, C_p\}$. Thus, each triangle $a_ib_ic_i$ is associated with a unique triple $(x,y,z) \in [\ell]^3$ such that the edge $a_ib_i$ is contained in a monochromatic component in $\ca C_x$, the edge $b_ic_i$ is contained in a component in $\ca C_y$, and $a_ic_i$ in a component in $\ca C_z$. By averaging, there is a triple $(x,y,z) \in [\ell]^3$ such that the union of components in $\ca C_x \cup \ca C_y \cup \ca C_z$ contains
\[q' := \lceil q/\ell^3\rceil \geq \frac{n}{10^{22} \log^3(1/\delta)}\]
vertex-disjoint triangles, $T_1, \dots, T_{q'}$. These triangles are contained in the union of $|\ca C_x \cup \ca C_y \cup \ca C_z| \leq \frac{r^2}{\log(1/\delta)}$ monochromatic components of $G'$. Let $S' := \bigcup_{i \in [q']} V(T_i)$ and $S := S' \cup \bigcup_{i \in [t]} U_i$. 

By \ref{prop:connectivityinreservoir}, any two vertices of $S' \subseteq V(G') $ that belong to the same $c$-coloured component in $G'$ for some $c \in [r]$ are connected by a $c$-coloured path in $S$. Thus, $T_1, \dots, T_{q'}$ are contained in the union of at most $\frac{r^2}{\log(1/\delta)}$ monochromatic components in $G[S]$ too. Finally, we have $|S| \geq 3q' \geq \frac{n}{10^{22} \log^3(1/\delta)}$ and $|S \setminus S'| \leq |\bigcup_{i \in [t]} U_i| \leq \eps n \leq \gamma |S|$, which finishes the proof. 
\end{proof}

Now we turn to \cref{lem:cherries} below, which will be used to construct barbells in the proof of \cref{lem:reducedgraphcover}. A \define{cherry} is a $3$-vertex path, and its \define{centre} is its vertex of degree $2$. Our barbells will be constructed by taking two triangles $a_1b_1c_1$ and $a_2b_2c_2$ and finding a cherry containing $a_1$, $a_2$ and a centre $c$ distinct from each $a_i, b_i$ and $c_i$. This justifies the statement of \cref{lem:cherries}, which finds a large collection of vertex-disjoint cherries. Its proof is similar to the proof of Lemma 6 in \cite{GYARFAS2006855}.

We require the following classical theorem. 

\begin{thm}[Erd\H{o}s-Gallai \cite{erdosgallai}]\label{thm:erdosgallai}
    If an $n$-vertex graph $G$ satisfies
    \[e(G) > \frac{(k-2)n}{2},\]
    then it contains a $k$-vertex path.
\end{thm}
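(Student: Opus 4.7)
The plan is to prove the contrapositive: any $n$-vertex graph $G$ containing no $P_k$ satisfies $e(G) \leq (k-2)n/2$. I would induct on $n$, with the base case $n \leq k-1$ handled immediately, since $e(G) \leq \binom{n}{2} \leq (k-2)n/2$ whenever $n \leq k-1$.

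For the inductive step, I would split on minimum degree. If some $v \in V(G)$ has $\deg(v) \leq (k-2)/2$, then applying the inductive hypothesis to the $P_k$-free graph $G - v$ gives $e(G) \leq e(G-v) + (k-2)/2 \leq (k-2)n/2$. Otherwise $\delta(G) \geq (k-1)/2$, and it suffices to derive a contradiction by exhibiting a $P_k$ in $G$.

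To produce this $P_k$, pass to a connected component $H$ of $G$ and take a longest path $P = v_1 v_2 \dots v_\ell$ in $H$, supposing for contradiction that $\ell \leq k-1$. Maximality forces all neighbors of $v_1$ and $v_\ell$ to lie on $P$, so the sets $A = \{i : v_1 v_i \in E\}$ and $B = \{i : v_\ell v_{i-1} \in E\}$ are subsets of $\{2, \dots, \ell\}$ each of size at least $(k-1)/2$. Since $|A| + |B| \geq k-1 > \ell - 1 = |\{2, \dots, \ell\}|$, pigeonhole supplies some $i \in A \cap B$, which produces the cycle $v_1 v_i v_{i+1} \dots v_\ell v_{i-1} v_{i-2} \dots v_2 v_1$ spanning $V(P)$. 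If $V(P) \subsetneq V(H)$, connectivity of $H$ then provides a vertex outside this cycle adjacent to it, yielding a path on $\ell + 1$ vertices and contradicting the maximality of $P$. Hence $V(H) = V(P)$ has at most $k-1$ vertices, and applying the base-case bound to each component closes the induction.

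The only real content of the argument is the P\'osa-style rotation step that converts a longest path into a spanning cycle under a minimum-degree hypothesis; the rest is routine induction and pigeonhole. I do not anticipate any obstacles, and the bound $(k-2)n/2$ is tight, witnessed by a vertex-disjoint union of copies of $K_{k-1}$.
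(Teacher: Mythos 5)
Your argument is correct and complete: the induction on $n$ with deletion of a low-degree vertex, followed by the P\'osa-type rotation producing a spanning cycle of each component when $\delta(G)\geq (k-1)/2$, is the standard proof of the Erd\H{o}s--Gallai path theorem, and all the arithmetic (the base case $n\leq k-1$, the pigeonhole $|A|+|B|\geq k-1>\ell-1$, and the final summation of $\binom{|V(H)|}{2}\leq (k-2)|V(H)|/2$ over components) checks out. The paper does not prove this statement -- it is quoted as a classical result with a citation to Erd\H{o}s and Gallai -- so there is nothing to compare against; your proof would serve as a self-contained justification. One cosmetic remark: in the high-minimum-degree branch you announce that you will "derive a contradiction by exhibiting a $P_k$", but what you actually show is the dichotomy that either a component contains a $P_k$ (contradicting $P_k$-freeness) or every component has at most $k-1$ vertices (in which case the bound follows directly without the inductive hypothesis); the logic is sound, but the framing sentence slightly misstates the structure.
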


\begin{lem}\label{lem:cherries} Let $1/n \ll 1/r, 1/K \ll 1$ and $\delta \in [e^{-r}, 1/K]$.
    Let $G= G[A, B]$ be an $r$-edge-coloured bipartite graph with $|B| = n$, $|A| \leq \max\{\delta n,\frac{n}{12r} \}$, and $\deg(v, B) \geq (1 - \delta)n$ for each $v \in A$. Then $G$ contains a collection of vertex-disjoint cherries covering $A$, with each cherry having its centre in $A$. Moreover, these cherries are contained in the union of at most
    \[6 r \log r + tc_{r+1}(6\delta)\]
    monochromatic components.
\end{lem}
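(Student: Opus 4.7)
I would split the construction of the cherry cover into two phases reflecting the two summands of the bound: an iterative greedy phase contributing the $6r\log r$ term and a tree-cover phase contributing the $tc_{r+1}(6\delta)$ term. The overall idea is to first reduce the size of the uncovered part of $A$ down to $\leq \delta n$ by repeatedly peeling off large monochromatic ``chunks'' of cherries one component at a time, and then finish off the small residual by converting a tree cover of an auxiliary graph into cherries.

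\noindent\textbf{Phase 1 (greedy reduction).} When $|A|>\delta n$, so that $\delta<1/(12r)$ and $|A|\leq n/(12r)$, I would iteratively cover vertices of $A$ via cherries contained in a single monochromatic component. At each step, for the current uncovered sets $A_i\subseteq A$ and $B_i\subseteq B$, pigeonhole on colour-degrees produces a colour $c$, a $c$-coloured component $C$ of $G[A_i,B_i]$, and a subset $A_i'\subseteq A_i\cap V(C)$ with $|A_i'|\gtrsim |A_i|/r$ such that each $a\in A_i'$ has $\gtrsim (1-\delta)|B_i|/r$ neighbours of colour $c$ in $B_i\cap V(C)$. Since $|A_i'|\leq |A|\leq n/(12r)$ is much smaller than this $c$-degree, one can greedily build vertex-disjoint cherries centred at $A_i'$ with both leaves inside $B_i\cap V(C)$; all such cherries lie inside the single $c$-component $C$. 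Iterate, adding one new monochromatic component per step, until $|A^*|\leq \delta n$. The number of iterations is controlled by $r\log(|A|/(\delta n))\leq r\log(1/(12r\delta))$, which is bounded by $6r\log r$ in the main regime.

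\noindent\textbf{Phase 2 (tree cover via Theorem~\ref{thm:tree_cover_main}).} Writing $B^*\subseteq B$ for the still-unused part of $B$, I would form the auxiliary $(r+1)$-edge-coloured graph $H$ on vertex set $A^*\cup B^*$ consisting of all $G$-edges between $A^*$ and $B^*$ (with their original colours in $[r]$) together with a complete graph on $B^*$ in a new colour $r+1$. Since $|A^*|/|V(H)|\leq \delta n/((1-\delta)n)\leq 2\delta$ and each $a\in A^*$ has $\deg_H(a)\geq (1-\delta)n$ while each $b\in B^*$ has $\deg_H(b)\geq |B^*|-1$, a short computation gives $\delta(H)\geq (1-6\delta)|V(H)|$. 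Applying Theorem~\ref{thm:tree_cover_main} yields a tree cover of $H$ by at most $tc_{r+1}(6\delta)$ monochromatic trees. Colour-$(r+1)$ trees contain no vertex of $A^*$, so every $a\in A^*$ is covered by some tree $T$ of colour $c\in[r]$; using $T$ I extract one $c$-neighbour $b_1\in V(T)\cap B^*$ of $a$, and then exploit the fact that $a$ has many $c$-neighbours in $B^*$ (and that only few $B^*$-vertices have been used so far) to pick a second $c$-neighbour $b_2$ in the same $c$-coloured component of $G$. Each tree thus produces cherries living in a single monochromatic component.

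\noindent\textbf{Main obstacle.} The main difficulty is Phase~1 in the extreme regime $\delta$ very close to $e^{-r}$. A naive ``cover $1/r$ fraction per step'' analysis would need $\Theta(r\log(1/(12r\delta)))$ iterations, which for $\delta=e^{-r}$ is of order $r^2$, not $r\log r$. Overcoming this requires exploiting the fact that when $\delta$ is very small the monochromatic components of $G$ are unavoidably large, so that each greedy step can cover a substantially larger fraction of the current $A$ than $1/r$; this is the delicate point of the argument and is where the analogy with Lemma~6 of \cite{GYARFAS2006855} becomes essential.
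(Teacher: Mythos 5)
Your two-phase architecture (greedy peeling for the $6r\log r$ term, then an auxiliary $(r+1)$-coloured graph fed into the tree-cover bound for the $tc_{r+1}(6\delta)$ term) matches the paper's proof, and your Phase 2 construction is essentially the paper's. But the "main obstacle" you flag at the end is a genuine, unresolved gap, and the direction you suggest for closing it is not the one that works. In the regime $\delta\leq 1/(12r)$ the paper does \emph{not} try to make a single monochromatic component cover more than a $1/r$ fraction per step. Instead, each iteration uses $r$ components at once and achieves \emph{quadratic} shrinkage: after pruning, for each vertex $a$, the edges in colours where $a$ has fewer than $2|A_{i-1}|$ neighbours in $B_{i-1}$, an averaging argument produces a single anchor vertex $u\in B_{i-1}$ with $\deg(u,A_{i-1})\geq |A_{i-1}|-\tfrac{6r}{n}|A_{i-1}|^2$. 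For each colour $c$, the set $X_c:=N_c(u,A_{i-1})$ lies in one $c$-coloured component (everything meets at $u$), and each vertex of $X_c$ has $\geq 2|A_{i-1}|$ $c$-neighbours, so cherries can be chosen greedily. This gives $|A_i|\leq \tfrac{6r}{n}|A_{i-1}|^2$, hence $2^{2^{\ell-1}}\leq re^{r}$ and $\ell=\mathcal{O}(\log r)$ rounds, i.e.\ $\mathcal{O}(r\log r)$ components total even when $\delta=e^{-r}$. Your proposed fix ("components are unavoidably large, so one component covers much more than $1/r$ of $A$ per step") would need each single component to absorb an $\Omega(1/\log r)$ fraction of the current leftover, which you give no argument for and which is not what happens in the reference you cite.

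Two smaller issues. First, your Phase 1 stops at $|A^*|\leq\delta n$, whereas the paper continues to $|A^*|\leq \delta n/r$; with the larger threshold, the degree loss in the auxiliary graph $H$ caused by discarding low-colour-degree edges is $\Theta(r\delta)|V(H)|$ rather than $\Theta(\delta)|V(H)|$, so you would only get $tc_{r+1}(\mathcal{O}(r\delta))$, not $tc_{r+1}(6\delta)$. Second, in Phase 2 you keep \emph{all} $G$-edges between $A^*$ and $B^*$ in $H$; then a monochromatic tree of colour $c$ may cover $a\in A^*$ through its unique $c$-edge, and your extraction of a second fresh $c$-neighbour $b_2$ fails. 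The paper avoids this by only retaining in $H$ the $c$-coloured edges at $a$ when $a$ has at least $2|A^*|$ such edges, which is exactly what guarantees that every covered vertex can be completed to a cherry avoiding previously used leaves.
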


\begin{proof}

Most of the proof consists in proving the following claim, which shows how to cover the majority of $A$. 

\begin{claim}\label{clm:almost}
    All but at most $\frac{\delta n }{r}$ vertices of $A$ can be covered with vertex-disjoint cherries having their centre in $A$ contained in the union of at most $6r\log r$ monochromatic components.
\end{claim}

Before proving the claim, let us show how to use it to complete the proof. Let $A' \subseteq A$ and $B' \subseteq B$ be obtained by removing the vertices of the cherries guaranteed by \cref{clm:almost}. Note that 
\begin{equation}\label{eq:sizeofB'} |B'| \geq n - 2|A| \geq n - 2\max\left\{\delta n, \frac{n}{12r}\right\} \geq \frac{9n}{10}, \end{equation} where we used $\delta \leq 1/K$ and $1/K, 1/r \ll 1$. Then we have
\[\deg(v, B') \geq |B'| - \delta n \geq (1- 2\delta)|B'|\]
for each $v \in A'$. Now we construct an auxiliary $(r+1)$-edge-coloured graph $G'$ on vertex set $A' \cup B'$ as follows:
\begin{enumerate}[label = (\roman*)]
    \item $B'$ induces a complete graph whose edges all take colour $r+1$, and
    \item $ab \in A' \times B'$ is present as an edge in $G'$ and $c$-coloured iff $ab$ is a $c$-coloured edge in $G$ and $a$ is incident with at least $2|A'|$ $c$-coloured edges in $G$. 
\end{enumerate}

Note that, for each $v \in B'$, we have
\[\deg_{G'}(v) \geq |B'| \geq |V(G')| - |A'| \geq |V(G')| - \frac{\delta n}{r} \geq (1 - \delta) |V(G')|,\]
where in the last inequality we used $|V(G')| \geq |B'|$ and \eqref{eq:sizeofB'}. On the other hand, for each $v \in A'$, we have
\[\deg_{G'}(v) \geq (1 - 2\delta)|B'| - 2r|A'| \geq (1- 2\delta)|V(G')| - 3r|A'| \geq (1 - 6\delta)|V(G')|, \]
again using \eqref{eq:sizeofB'} in the last inequality. This shows that $\delta(G') \geq (1- 6\delta)|V(G')|$ and so $G'$ can be covered with $t \leq tc_{r+1}(6\delta)$ monochromatic components $C_1 , \dots, C_t$, whose corresponding colours are $c_1, \dots, c_t$. Let us choose an arbitrary partition $X_1 \cup \dots \cup X_t$ of $A'$ such that $X_i \subseteq V(C_i)$. Observe that, unless $|V(C_i)| = 1$, each vertex in $X_i$ has at least $2|A'|$ incident $c_i$-coloured edges by the construction of $G'$. Then, for each $i \in [t]$ in order, we can pick greedily a collection of vertex-disjoint $c_i$-coloured cherries inside of component $C_i$ covering $X_i$ and avoiding previously constructed cherries. If $|V(C_i)| = 1$ instead, then either $X_i = \emptyset$ and there is nothing left to do, or $X_i = \{x_i\}$ for some $x_i \in A'$: in this case, note that there is some $c \in [r]$ such that $\deg_{G'_c}(x_i, B') \geq (1-6\delta)|V(G')|/r \geq 2|A'|$, and we can greedily pick a cherry covering $x_i$ with unused vertices in colour $c$. 

This procedure yields a collection of vertex-disjoint cherries in $G'$ covering $A'$ contained in the union of at most $tc_{r+1}(6\delta)$ monochromatic components. These cherries cannot take colour $r+1$ since they are contained in $G'[A', B']$, and so they are also contained in $G[A', B']$. Moreover, the components of $G'$ using colours in $[r]$ are subgraphs of the monochromatic components of $G$. Therefore, combining these cherries with the ones guaranteed by \cref{clm:almost}, we obtain a collection of cherries covering $A$ contained in no more than $tc_{r+1}(6\delta) + 6r\log r$ components, as desired. It now only remains to show how to obtain the claim. 

\begin{proof}[Proof of \cref{clm:almost}.]
    The proof splits into two cases depending on the value of $\delta$. 

    \paragraph{Case 1:} $\delta \geq 1/(12r)$. Then our assumptions imply that $|A| \leq \delta n$. We proceed by repeatedly finding and removing a largest collection of vertex-disjoint cherries with centres in $A$ all belonging to the same monochromatic component of $G$, until the set of uncovered vertices in $A$ has size at most $\frac{\delta n}{r}$. This generates nested sequences of sets $A = A_0 \supseteq \dots \supset A_\ell$ and $B = B_0 \supseteq \dots \supseteq B_\ell$, where $A_i$ and $B_i$ are the sets of unused vertices at the end of the $i$th step of the procedure and the $\ell$th step is the last, so that
    \[|A_\ell| \leq \frac{\delta n}{r} \leq |A_{\ell-1}|.\]
    Note that $|B_\ell| \geq |B| - 2|A| \geq (1 - 2\delta) n \geq \frac{n}{2}$ since at each step we remove a collection of cherries.

    We claim that for each $i \in [\ell]$ we have $|A_i| \leq (1 - 1/(5r))|A_{i-1}|$. Indeed, for each $i \in [\ell]$ and $v \in A_{i-1}$,
    \[\deg(v, B_{i-1}) \geq |B_{i-1}| - \delta n \geq (1- 2\delta)|B_{i-1}| \geq \frac{|A_{i-1} \cup B_{i-1}|}{2}, \]
    where we used that $|A_{i-1}| \leq \delta n, |B_{i-1}| \geq n/2$ and $\delta \leq 1/K \ll 1$. Thus, $e(G[A_{i-1}, B_{i-1}]) \geq \frac{|A_{i-1}||A_{i-1} \cup B_{i-1}|}{2}$, which in turn implies that there is some $c \in [r]$ such that $e(G_c[A_{i-1}, B_{i-1}]) \geq \frac{|A_{i-1}||A_{i-1}\cup B_{i-1}|}{2r}$. By the Erd\H{o}s-Gallai theorem (\cref{thm:erdosgallai}), $G[A_{i-1}, B_{i-1}]$ contains a $c$-coloured path on at least $|A_{i-1}|/r$ vertices. This path contains a collection of at least $|A_{i-1}|/(5r)$ vertex-disjoint $c$-coloured cherries with centres in $A$ all belonging to the same monochromatic component. This implies $|A_i| \leq (1 - 1/(5r))|A_{i-1}|$, as desired. 

    Thus, 
    \[\frac{\delta n}{r} \leq |A_{\ell - 1}| \leq \left(1 - \frac{1}{5r} \right)^{\ell -1} |A_0| \leq e^{-(\ell -1)/(5r)} \cdot \delta n,\]
    which gives
    \[\ell \leq 6r \log r.\] Therefore, we have found $6r \log r$ monochromatic components whose union contains cherries covering all but $\delta n/r $ vertices, as desired. 

    \paragraph{Case 2:} $\delta \leq 1/(12r)$. Then we have $|A| \leq \frac{n}{12r}$. In this case, we proceed by iteratively finding and removing a largest collection of vertex-disjoint cherries centred in $A$ contained in the union of $r$ components of distinct colours, until the leftover has size at most $\frac{\delta n }{r}$. Similarly to the previous case, this generates nested sequences of sets of unused vertices $A = A_0 \supseteq \dots \supseteq A_\ell$ and $B = B_0 \supseteq \dots \supseteq B_\ell$, where
    \[|A_\ell| \leq \frac{\delta n }{r} \leq |A_{\ell -1}|,\]
    and $|B_\ell| \geq |B| - 2|A| \geq n/2$. 

    We claim that, for each $i \in [\ell]$,
    \begin{equation}\label{eq:expondecrease}|A_{i}| \leq \frac{6r}{n} \cdot |A_{i-1}|^2.\end{equation}
    To see this, we take $G[A_{i-1}, B_{i-1}]$ and delete all edges $ab \in A_{i-1} \times B_{i-1}$ such that, if $c$ is the colour of $ab$, then $a$ is incident with fewer than $2|A_{i-1}|$ $c$-coloured edges whose other endpoint is in $B_{i-1}$. Letting the modified graph be $G'$, we have
    \[e(G'[A_{i-1}, B_{i-1}]) \geq |A_{i-1}|(|B_{i-1}| - \delta n - 2r|A_{i-1}|) \geq |A_{i-1}|( |B_{i-1}| - 3r |A_{i-1}|) , \]
    where in the last inequality we used the fact that $|A_{i-1}| \geq \frac{\delta n}{r}$. By averaging over vertices in $B_{i-1}$, we find a $u \in B_{i-1}$ with
    \[\deg_{G'}(u, A_{i-1}) \geq |A_{i-1}| - \frac{3r}{|B_{i-1}|} \cdot |A_{i-1}|^2 \geq |A_{i-1}| - \frac{6r}{n} \cdot |A_{i-1}|^2,\]
    where in the last inequality we used $|B_{i-1}| \geq |B_\ell| \geq n/2$. For each $c \in [r]$, define $X_c := N_c(u, A_{i-1})$, and observe that each vertex in $X_c \subseteq A_{i-1}$ has at least $2|A_{i-1}|$ incident $c$-coloured edges whose other endpoint lies in $B_{i-1}$. Thus, for each $c \in [r]$ in order, we can greedily pick vertex-disjoint $c$-coloured cherries centred in $A$ covering all of $X_c$ and avoiding cherries in other colours $c' \in [r]$. All the $c$-coloured cherries thus constructed are mutually connected in colour $c$ since their centres belong to $N_c(u)$. Putting everything together, we obtain a collection of cherries from $r$ components of distinct colours covering all but $\frac{6r}{n} \cdot |A_{i-1}|^2$ vertices, as claimed in \eqref{eq:expondecrease}.

    Finally, unwinding \eqref{eq:expondecrease} we obtain
    \begin{equation*}
        \begin{split}
            \frac{\delta n}{r} \leq |A_{\ell -1}| \leq \left(\frac{6r}{n} \right)^{2^{\ell -1} -1} |A|^{2^{\ell-1}} \leq \left(\frac{6r}{n} \right)^{2^{\ell -1} -1} \cdot \left(\frac{n}{12r} \right)^{2^{\ell-1}} \leq \frac{n}{2^{2^{\ell-1}}}.
        \end{split}
    \end{equation*}
    However, $\delta \geq e^{-r}$ and so we obtain
    \[2^{2^{\ell-1}} \leq re^r,\]
    which forces $\ell \leq 2\log r$ (using the fact that $1/r \ll 1$). Altogether, this procedure finds a collection of vertex-disjoint cherries covering all but $\frac{\delta n}{r}$ vertices of $A$ contained in at most $r \cdot \ell \leq 2r \log r$ monochromatic components. This finishes the proof of claim. \end{proof} 
\end{proof}

Now we are ready to complete the proof of \cref{lem:reducedgraphcover}, thereby finishing this section.  

\begin{proof}[Proof of \cref{lem:reducedgraphcover}] Introduce a new constant $\gamma > 0$ such that
\[1/n \ll \gamma \ll 1/r, 1/K \ll 1.\]

We apply \cref{lem:triangles} to $G$ with $n,\gamma, r$ playing their own roles. Its assumption is met since $\delta(G) \geq (1-\delta)n \geq \frac{3n}{4}$. The lemma application yields a set $S' \subseteq V(G)$ with $|S'| \geq \frac{n}{10^{22} \log^3(1/\delta)}$ such that, in $G[S']$, there is a collection of vertex-disjoint triangles $T_1, \dots, T_{q}$ with \begin{equation}\label{eq:sizeofS'}|S' \setminus \bigcup_{i\in[q]}V(T_i)| \leq \gamma |S'| \leq \gamma n.\end{equation} Moreover, the edges of these triangles are contained in at most $r^2/\log(1/\delta)$ monochromatic components of $G[S']$.

Recall (from \cref{sec:notation}) that a monochromatic connected matching is a matching contained in a monochromatic component. Our next step is to cover a large portion of the vertices of $V':= V(G) \setminus S'$ by iteratively removing monochromatic connected matchings from $G[V']$. 

\begin{claim}\label{clm:greedycover}
    $G[V']$ contains a collection of at most $11r \log r$ vertex-disjoint monochromatic connected matchings covering all but at most $\max\{2\delta n, \frac{n}{r^5}\}$ vertices of $V'$. 
\end{claim}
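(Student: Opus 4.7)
My plan is to apply a greedy extraction procedure. Initialize $W_0 := V'$ and iteratively find a monochromatic connected matching $M_i$ of maximum size in $G[W_{i-1}]$, setting $W_i := W_{i-1} \setminus V(M_i)$; halt once $|W_i| \le \max\{2\delta n, n/r^5\}$. The aim is to guarantee that at each step $|W|$ shrinks by a factor close to $1 - 1/(2r)$, so that $11 r \log r$ iterations suffice to reach the target.

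To extract each $M_i$: while $|W| > \max\{2\delta n, n/r^5\}$, the hypothesis $\delta(G) \ge (1-\delta)n$ gives every $v \in W$ degree $\deg_{G[W]}(v) \ge (1-\delta)n - (n - |W|) = |W| - \delta n = (1-\alpha)|W|$ inside $G[W]$, where $\alpha := \delta n / |W| \le 1/2$. Hence $e(G[W]) \ge (1-\alpha)|W|^2/2$, and averaging over the $r$ colours yields some $c \in [r]$ with $e(G_c[W]) \ge (1-\alpha)|W|^2/(2r)$. By the Erd\H{o}s--Gallai theorem (\cref{thm:erdosgallai}), $G_c[W]$ contains a path on at least $(1-\alpha)|W|/r + 1$ vertices; crucially, this path automatically lies in a single $c$-coloured component of $G$, and it contains a matching of size at least $(1-\alpha)|W|/(2r)$. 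Take this as $M_i$, so that $|W_i| \le (1 - (1-\alpha_{i-1})/(2r))|W_{i-1}|$.

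To count iterations, I observe that throughout most of the procedure $\alpha$ is quite small, and a phased analysis gives the claimed bound. Specifically, whenever $|W| \ge 11 \delta n$ we have $\alpha \le 1/11$ and per-step shrinkage at most $1 - 10/(22r)$, so bringing $|W|$ from $n$ down to $\max\{11 \delta n, n/r^5\}$ (a ratio of at most $r^5$) costs at most $(22/10) r \cdot 5\log r = 11 r \log r$ iterations. If the binding target turns out to be $2\delta n$ rather than $n/r^5$, a brief second phase from $11\delta n$ down to $2\delta n$ (where $\alpha$ may be as large as $1/2$ and the shrinkage is at least $1 - 1/(4r)$) costs only $O(r)$ further iterations, which is absorbed into the final bound. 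The main technical subtlety is the \emph{connectedness} requirement: each $M_i$ must lie inside a single monochromatic component of $G$, not merely be monochromatic. Extracting a path via Erd\H{o}s--Gallai resolves this for free (paths are connected), at the cost of a factor of two in matching size; the phased analysis above is precisely what recovers this factor and yields the tight constant $11$.
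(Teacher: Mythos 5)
Your greedy-extraction strategy is exactly the paper's: repeatedly pull out a largest monochromatic connected matching, lower-bound its size by averaging over colours and applying Erd\H{o}s--Gallai, and obtain connectedness for free because a monochromatic path lies in a single monochromatic component. The one place the write-up goes wrong is the bookkeeping of how many vertices each step removes, and this causes your final count to overshoot the claimed bound of $11r\log r$.

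Concretely: from a $c$-coloured path on $k$ vertices one extracts a matching with about $k/2$ \emph{edges}, but that matching \emph{covers} about $k$ vertices (all but at most one vertex of the path), so deleting $V(M_i)$ removes essentially the whole path. There is no ``factor of two in matching size'' to recover. Your recursion $|W_i|\le(1-(1-\alpha)/(2r))|W_{i-1}|$ accounts for only half of what the path provides; the correct recursion is $|W_i|\le(1-(1-\alpha)/r)|W_{i-1}|$. Because of this self-inflicted loss, the phased analysis does not close the gap: phase 1 alone already consumes up to $11r\log r$ iterations when the ratio is $r^5$, and whenever phase 2 is nonempty (e.g.\ $\delta$ of order $r^{-5}$) the total comes out to roughly $11r\log r+3r$, strictly exceeding the claimed bound --- the $O(r)$ cost of phase 2 is \emph{not} absorbed, contrary to your assertion. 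The fix is immediate and is what the paper does: use the crude bound $1-\alpha\ge 1/2$ (valid throughout, since the procedure only runs while $|W|>2\delta n$) together with the correct vertex count for the matching, giving per-step shrinkage $1-1/(2r)$ and hence at most $10r\log r+1\le 11r\log r$ iterations, with no phases needed.
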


\begin{proof}
We proceed by iteratively removing the largest monochromatic connected matching from the graph spanned by the set of uncovered vertices in $V'$, until its size satisfies the desired bound. This generates a nested sequence of sets of leftover $V' = V_0 \supseteq V_1 \supseteq \dots \supseteq V_t$, where $|V_t| \leq \max\{2\delta n, \frac{n}{r^5}\} \leq |V_{t-1}|$.

For each $i \in [t]$, we have $|V_{i-1}| \geq 2\delta n$ and so $\delta(G[V_{i-1}]) \geq |V_{i-1}| - \delta n \geq |V_{i-1}|/2$. This implies $e(G[V_{i-1}]) \geq |V_{i-1}|^2/4$ and thus there is a colour $c \in [r]$ such that $e(G_c[V_{i-1}]) \geq |V_{i-1}|^2/(4r)$. By the Erd\H{o}s-Gallai theorem (\cref{thm:erdosgallai}), $G[V_{i-1}]$ contains a $c$-coloured path on at least $|V_{i-1}|/(2r) + 1$ vertices. This path contains a monochromatic connected matching on $|V_{i-1}|/(2r)$ vertices, which implies
\[|V_i| \leq \left(1 - \frac{1}{2r} \right)|V_{i-1}|.\]
Hence, 
\[\frac{n}{r^5} \leq |V_{t-1}| \leq \left(1 - \frac{1}{2r} \right)^{t-1}|V_0| \leq e^{-(t-1)/(2r)}n,\]
which rearranges to 
\[\frac{t-1}{2r} \leq 5 \log r.\]
This yields $t \leq11r\log r$, as desired.
\end{proof}

Let $V''$ be obtained from $V'$ by removing the vertices of the at most $11r \log r$ connected matchings guaranteed by \cref{clm:greedycover}, so that $|V''| \leq \max\{2\delta n, \frac{n}{r^5}\}$. Now define $A := V'' \cup (S' \setminus \bigcup_{i \in [q]} V(T_i)))$ which by \eqref{eq:sizeofS'} satisfies
\[|A| \leq \max\left\{2\delta n, \frac{n}{r^5}\right\} + \gamma n \leq \max\left\{3\delta n ,  \frac{2n}{r^5}\right\}.\]

For each $i \in [q]$, let $t_i \in V(T_i)$ be an arbitrarily chosen vertex, and let $B := \{t_1, \dots, t_q\}$, so that \( |B| = q \geq \frac{n}{10^{23} \log^3(1/\delta)}.\) Observe that
\[3\delta n \leq 10^{24}\delta \log^3(1/\delta) q \leq \frac{\delta^{1/2} q}{6},\]
using the fact that $\delta \leq 1/K \ll 1$. Thus,
\[\deg(v, B) \geq |B| - \delta n \geq  (1 - \delta^{1/2}/6)q.\]
Furthermore,
\[\frac{2n}{r^5} \leq \frac{10^{24} \log^3(1/\delta)q}{r^5} \leq \frac{10^{21} r^3 q}{r^5} \leq \frac{q}{12r},\]
where we used the fact that $1/r \ll 1$ and $\delta \geq e^{-r/10}$. 

The previous three inequalities verify the assumptions of \cref{lem:cherries} applied to $G[A, B]$ with $q, r, K, \delta^{1/2}/6$ playing the roles of $n, r, K, \delta$. The lemma application yields a collection of vertex-disjoint cherries covering $A$, each having its centre in $A$, contained in the union of at most $6r \log r + tc_{r+1}(\delta^{1/2})$
monochromatic components of $G[A, B]$. Let $S := A \cup \bigcup_{i \in [q]} V(T_i)$. These cherries, together with the triangles $T_i$ that they intersect, form a collection of vertex-disjoint barbells covering the entirety of $A$ as well as part of $S \setminus A$. The remaining vertices in $S$ are spanned by all the triangles $T_i$ that were not used by the cherries. Altogether, we obtain a collection of vertex-disjoint triangles and barbells covering $S$ contained in the union of at most
\[\frac{r^2}{\log(1/\delta)} + 6r\log r + tc_{r+1}(\delta^{1/2}) \leq 6r \log r + 61tc_{r+1}(\delta^{1/2}),\]
monochromatic components of $G[S]$, where we used \cref{lem:lower_bound_tree} to upper bound $r^2/\log(1/\delta)$ with a function of $tc_{r+1}(\delta^{1/2})$. 

Finally, observe that \[V(G) \setminus S=  V(G) \setminus (A \cup \bigcup_{i \in [q]} V(T_i))  = V(G) \setminus (V'' \cup S')= V' \setminus V''.\]
It was proven in \cref{clm:greedycover} that $V' \setminus V''$ has a perfect matching contained in the union of at most $11r\log r$ monochromatic components, which finishes the proof.\end{proof}

\section{Proof of main theorem}\label{sec:main_theorem}

In this section, we combine all the results obtained thus far into a proof of the following theorem, our main upper bound on $cp_r(\delta)$. 

\begin{thm}\label{lem:ub_mcpart} Let $1/n \ll 1/r$, let $1/K \ll 1$ and $\delta \in (0, 1/2)$. Any $n$-vertex $r$-edge-coloured graph $G$ with $\delta(G) \geq (1- \delta)n$ admits a partition into at most
    \[K r \log r \left\lceil \frac{r}{\log(1/\delta)} \right\rceil\]
monochromatic cycles. 
\end{thm}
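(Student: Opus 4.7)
The plan is to combine the regularity lemma, the reduced-graph cover of \cref{lem:reducedgraphcover}, and the absorption lemma (\cref{lem:absorption}) in the standard connected-matching pipeline. As a preliminary reduction I would handle the extreme regimes separately: when $\delta \leq e^{-r/10}$ one has $\lceil r/\log(1/\delta)\rceil = 1$, and a straightforward adaptation of the Gy\'arf\'as-Ruszink\'o-S\'ark\"ozy-Szemer\'edi $\ca O(r\log r)$-argument to graphs with $\delta(G) \geq (1-e^{-r/10})n$ already suffices; when $\delta$ lies close to $1/2$ the bound $cp_r(\delta) = \ca O(r^2)$ of Kor\'andi-Lang-Letzter-Pokrovskiy is stronger. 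We may thus assume $e^{-r/10} \leq \delta \leq 1/(2K)$.

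I would first apply the multicolour regularity lemma (\cref{lem:regularitylemma}) with parameters $\eps \ll 1/r^{40r}$ and $d \ll 1/r$, producing a partition $\{V_0, V_1, \dots, V_t\}$, a cleaned subgraph $G'$, and a reduced graph $R$ with $\delta(R) \geq (1-2\delta)t$. Then \cref{lem:reducedgraphcover} supplies a set $S \subseteq V(R)$ of positive proportion, a spanning collection $F$ of vertex-disjoint triangles and barbells of $R[S]$ contained in the union of $\ca O(r\log r + tc_{r+1}(\delta^{1/2}))$ monochromatic components, and a perfect matching $M$ of $R - S$ contained in $\ca O(r\log r)$ monochromatic components.

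Next I would clean up clusters: for each edge $V_iV_j$ of $F \cup M$ of colour $c$, discard the few vertices of $V_i$ that fail to have positive $c$-coloured minimum degree into $V_j$ (and symmetrically), and additionally remove a few vertices from each $M$-cluster so that matched clusters share a common even size. Collecting these together with $V_0$ into a leftover set $L$ gives $|L| \leq n/r^{40r}$, by the choice of $\eps$ together with the bounded maximum degree of $F \cup M$. Let $W$ be the union of the cleaned $F$-clusters; since $|S|$ is a positive fraction of $t$ and $|L| \ll n$, every $v \in L$ satisfies $\deg_G(v,W) \geq (1-2\delta)|W|$. Then \cref{lem:absorption} applied to $G[L,W]$ (with $2\delta$ in place of $\delta$) yields at most $\tfrac{K}{2}\, r \lceil r\log r/\log(1/\delta)\rceil$ vertex-disjoint monochromatic cycles covering $L$.

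Finally, I would cover the remainder using a blow-up argument. After absorption each $F$-cluster has lost only $r^{\ca O(r)}$ vertices, and adjusting by one vertex per cluster if necessary we may assume the surviving sizes $b(v)$ are even and at least $9m/10$. For every triangle (resp.\ barbell) $T$ in $F$, \cref{fact:trianglerobust} (resp.\ \cref{fact:barbellrobust}) supplies a perfect $b$-matching of $T$ with every weight at least $m/10$; splitting each cluster according to these weights produces, for each edge $uv$ of $T$ of colour $c$, a pair of equal-sized sub-clusters forming a regular pair of positive density in $G'_c$. Gluing together the sub-cluster pairs coming from a single monochromatic component of $R$ via \cref{lem:reg_connecting_lemma} and \cref{lem:spanningpath} converts that monochromatic connected matching into one monochromatic cycle in $G$; the matching $M$ is handled analogously. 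This contributes one cycle per monochromatic component of $F$ or $M$, i.e.\ $\ca O(r\log r + tc_{r+1}(\delta^{1/2}))$ cycles, and using $tc_{r+1}(\delta^{1/2}) = \ca O(r^2\log r/\log(1/\delta))$ from \cref{thm:tree_cover_main} the total, combined with the absorption contribution, is $\ca O(r\log r \lceil r/\log(1/\delta)\rceil)$. The main obstacle is the absorption step, which is precisely the purpose of \cref{lem:absorption}; the remaining delicate point is that the absorption cycles encroach on $F$-clusters, and the robust $b$-matching property of triangles and barbells is what allows this perturbation to be absorbed without obstructing the blow-up.
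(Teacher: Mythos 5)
Your overall pipeline is the one the paper uses (regularity, then \cref{lem:reducedgraphcover}, then absorption, then a blow-up covering driven by the robust $b$-matchings of triangles and barbells), but two steps as written would fail. First, the absorption step: applying \cref{lem:absorption} to $G[L,W]$ produces vertex-disjoint cycles covering $L$, but gives no control over how their $W$-vertices distribute among the clusters. Since $|L|$ can be of order $\eps n$ while a single cluster has size $m \approx n/t$ with $t \geq 1/\eps$, the absorption cycles could swallow entire $F$-clusters, so your assertion that ``after absorption each $F$-cluster has lost only $r^{\ca O(r)}$ vertices'' is unjustified. The paper avoids this by first reserving a uniformly random set $U \subseteq W$ of size $\lceil\eps^{1/2}|W|\rceil$, so that $|U \cap V_i'| \leq 4\eps^{1/2}m$ for every cluster, building skeleton cycles $C_i, C_j'$ in $V'\setminus U$ that witness every edge of $F \cup M$, and running the absorption inside $G[L,U]$ only. (A related slip: $\deg(v,W) \geq (1-2\delta)|W|$ is false in general, since $|W|$ is only an $\Omega(1/\log^3(1/\delta))$-fraction of $n$; the correct bound is $(1-\delta^{1/2})|W|$, which is why the paper invokes the absorption lemma with $2\delta^{1/2}$ in place of $\delta$.)

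Second, the parity repair. ``Adjusting by one vertex per cluster'' leaves up to $|I| \leq t$ stray vertices, and $t$ is bounded only by the regularity constant $M$, which dwarfs the permitted $Kr\log r\lceil r/\log(1/\delta)\rceil$; these vertices can neither be declared degenerate cycles nor be covered by any remaining mechanism. The paper's \cref{claim:even} instead pairs up odd clusters lying in a common monochromatic component of $R$ and fixes both parities at once by lengthening the corresponding skeleton cycle, so that at most $p$ singletons remain. Finally, your preliminary case split is off: for $\delta \leq e^{-r/10}$ one only gets $\lceil r/\log(1/\delta)\rceil \leq 10$, not $=1$, and the ``straightforward adaptation'' of the Gy\'arf\'as--Ruszink\'o--S\'ark\"ozy--Szemer\'edi argument is claimed in the paper only for $\delta = r^{-\Omega(r)}$, leaving the window $r^{-\Omega(r)} < \delta < e^{-r/10}$ untreated. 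The paper instead replaces $\delta$ by $\max\{\delta, e^{-r}\}$ using monotonicity of the bound and feeds $\delta' = \max\{2\delta, e^{-r/10}\}$ into \cref{lem:reducedgraphcover}, which is why no separate small-$\delta$ argument is needed.
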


The proof of this theorem essentially follows the outline provided in \cref{sec:overview}. First, we will need the following result.

\begin{thm}[\cite{korandilangletzterpokrovksiy}]\label{thm:monopart}
    For $r \geq 2$, let $n$ be sufficiently large. Then any $r$-edge-coloured graph $G$ on $n$ vertices with $\delta(G) \geq n/2 + 1200r \log n $ admits a partition into at most $10^7 r^2$ monochromatic cycles.
\end{thm}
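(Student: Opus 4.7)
The plan is to combine the Szemerédi regularity lemma with Łuczak's connected matching method and a short absorption step. The first move is to apply the degree form of the multicolour regularity lemma (\cref{lem:regularitylemma}) to $G$ with parameters $1/n \ll 1/M \ll \eps, d \ll 1/r$, obtaining a partition $\{V_0,V_1,\dots,V_t\}$ and the $(\eps,d)$-reduced graph $R$. The hypothesis $\delta(G) \geq n/2 + 1200r\log n$ is calibrated so that, after the losses tracked by \cref{fact:reduceddeg}, the reduced graph satisfies $\delta(R) \geq (1/2 + \eta)t$ for a fixed $\eta>0$ depending on $\eps, d, r$; the logarithmic slack $1200r\log n$ is engineered to cover both this regularity loss and the union-bound costs that appear in the absorption step below.

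The heart of the argument is to cover all but an $\eps$-fraction of $V(R)$ by $\mathcal O(r^2)$ vertex-disjoint \emph{monochromatic connected matchings}. I would first invoke the easy upper bound \cref{lem:simpleUB} (with $\delta$ slightly above $1/2$) to obtain a cover of $V(R)$ by $\mathcal O(r^2)$ monochromatic components $T_1, \dots, T_s$. Each $T_i$ inherits high minimum degree from $R$ (since any $v \in V(T_i)$ has $\geq(1/2+\eta)t$ neighbours and only $\ll \eta t$ of them can fall outside the union of the $T_i$ containing $v$ in other colours, so a large portion of its neighbourhood lies inside $T_i$). A Tutte--Berge / alternating-path argument then shows each $T_i$ contains a matching saturating all but $O(1)$ of its vertices. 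Running through $T_1, \dots, T_s$ in turn and discarding already-matched vertices produces a collection of pairwise vertex-disjoint monochromatic connected matchings $M_1, \dots, M_s$ covering all but a small exceptional set $L_0 \subseteq V(R)$ with $|L_0| = o(t)$.

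Next, I would convert each $M_j$ into a single monochromatic cycle in $G$ by the standard Łuczak blow-up routine. For each $M_j$ of colour $c_j$, \cref{lem:reg_connecting_lemma} supplies short $c_j$-coloured connecting paths through the clusters in a cyclic order along $M_j$, and \cref{lem:spanningpath} shows that after setting aside a tiny reservoir of each cluster to accommodate the connecting paths (and later absorption), every matched pair of clusters admits a $c_j$-coloured Hamilton path of the surviving bipartite blow-up. Splicing these spanning paths with the connecting paths yields exactly one monochromatic cycle per matching, so $\mathcal O(r^2)$ cycles altogether, covering every vertex of $G$ except those in $V_0$, the clusters of $L_0$, and the small reservoirs.

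Finally, I would absorb the residual set $L$ (of size $\mathcal O(\eps n + t + |L_0|\cdot n/t) \ll 1200 r\log n$) into the already-constructed cycles one vertex at a time: for each $v \in L$, since $\deg_G(v) \geq n/2 + 1200 r\log n$, some colour class $c$ witnesses $\deg_c(v)\geq\tfrac{1}{r}(n/2+1200r\log n)$, and by a union bound across the $\mathcal O(r^2)$ cycles and the reservoir vertices, $v$ has a consecutive pair $xy$ of reservoir vertices on some $c$-coloured cycle with $vx,vy\in E(G_c)$; splice $v$ in by replacing the edge $xy$ with the path $xvy$. The $1200r\log n$ slack is exactly what makes this union bound succeed for all $|L|$ leftover vertices simultaneously. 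No new cycles are added, so the final count is $\mathcal O(r^2) \leq 10^7 r^2$. The main obstacle is the second step: turning a tree cover of $R$ into a vertex-disjoint cover by \emph{connected matchings} of the same order requires the Tutte--Berge/minimum-degree analysis inside each component and careful disjointness bookkeeping across components, since the tree cover bound alone allows both overlaps and tree structure that is not immediately matchable.
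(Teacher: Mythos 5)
This theorem is not proved in the paper at all: it is quoted from Kor\'andi, Lang, Letzter and Pokrovskiy \cite{korandilangletzterpokrovksiy} and used as a black box (for instance to dispose of the cases $r \leq r'$ and $\delta \geq 1/r$ at the start of the proof of \cref{lem:ub_mcpart}). So there is no internal proof to compare against, and I can only assess your sketch on its own terms; it contains several genuine gaps.

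The most serious is your claim that the slack $1200r\log n$ survives the regularity lemma, so that $\delta(R) \geq (1/2+\eta)t$ for a fixed $\eta>0$. By \cref{fact:reduceddeg} the reduced graph only satisfies $\delta(R) \geq (1/2 + 1200r\log n/n - rd - \eps)t$, and since $rd+\eps$ is a fixed positive constant while $1200r\log n/n = o(1)$, the minimum degree of $R$ drops strictly \emph{below} $t/2$. The logarithmic term is far too small to pay for the regularity losses, which are linear in $n$; in \cite{korandilangletzterpokrovksiy} it serves an entirely different purpose (it guarantees that any two vertices of $G$ have at least $2400r\log n$ common neighbours, which powers the treatment of the exceptional vertices). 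Second, covering $V(R)$ by $\ca O(r^2)$ monochromatic components via \cref{lem:simpleUB} does not yield near-perfect matchings inside those components: a component $T_i$ of the colour-$c_i$ subgraph can contain vertices whose degree \emph{in colour $c_i$} is $1$ even though their degree in $R$ is $(1/2-o(1))t$ --- your parenthetical argument only shows that the neighbours of $v$ lie in the various (differently coloured) components containing $v$, not that many of them lie in $T_i$ via $c_i$-coloured edges. Even where large connected matchings exist, greedily deleting matched vertices can leave the remainder of a later $T_j$ with no sizeable connected matching at all. This is exactly why both \cite{korandilangletzterpokrovksiy} and the present paper resort to heavier machinery (Erd\H{o}s--Gallai greedy covers costing an extra $\log r$ factor, robustly matchable structures, parity corrections via $b$-matchings). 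Finally, the absorption step fails on its own numerics: the leftover $L$ contains $V_0$ and hence has size $\Theta(\eps n)$, not $\ll 1200r\log n$, and splicing $v$ into a $c$-coloured cycle needs a \emph{consecutive} pair $x,y$ on that cycle with $vx,vy \in E(G_c)$; nothing you construct guarantees that a positive fraction of consecutive pairs on the $c$-coloured cycles lies in $N_c(v)\times N_c(v)$, let alone that linearly many leftover vertices can be spliced in simultaneously on pairwise disjoint edges.
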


\begin{proof}[Proof of \cref{lem:ub_mcpart}]
Introduce new constants $r', C \geq 1$ and $\eps > 0$ with updated hierarchy being
\[1/n \ll 1/C \ll \eps \ll 1/r, 1/K \text{ and } 1/K \ll 1/r' \ll1 .\]
First let us remark that we already know the statement to be true if $r \leq r'$ as this implies $10^7r^2 \leq K \leq Kr \log r$, in which case the result follows from \cref{thm:monopart}. The same theorem also gives the statement if $\delta \geq 1/r$, as then we have $10^7 r^2 \leq \frac{Kr^2 \log r}{\log(1/\delta)}$. Finally, without loss of generality we have $\delta \geq e^{-r}$ because decreasing $\delta$ further does not reduce the number of cycles required by the theorem. Summarising, we may and will assume that
\[1/n \ll 1/C \ll \eps \ll  1/r, 1/K \ll 1 \text{ and } \delta \in [e^{-r}, 1/r], \]
where we dropped $r'$ as it is not needed other than to establish $1/r \ll1$. 

Let $d:= e^{-2r}$. Let us now apply the regularity lemma (\cref{lem:regularitylemma}) to $G$ with $n, \eps, r, d$ all playing their own roles and $C$ playing the role of $M$. This yields a partition $\{V_0, \dots, V_t\}$ of $V(G)$ and a subgraph $G'$ of $G$ on vertex set $V(G) \setminus V_0$ satisfying properties \ref{prop:reg1}--\ref{prop:reg5}. Together with this, we have the $(\eps, d)$-reduced multigraph $R(G)$ on vertex set $\{V_1, \dots, V_t\}$. We discard all but one (coloured) edge between each pair of vertices in $R(G)$, thereby obtaining an $r$-edge-coloured graph $R$. By \cref{fact:reduceddeg}, we have
\[\delta(R) \geq (1- \delta - rd - \eps)t \geq (1 - 2\delta) t, \]
where we used $rd, \eps \leq e^{-r}/2 \leq \delta/2$. 

Let $\delta' := \max\{2\delta, e^{-r/10}\}$ and note that $\delta^{1/10} \geq \delta'$ since $\delta \leq 1/r \ll1$ and $\delta \geq e^{-r}$. Now we apply \cref{lem:reducedgraphcover} to $R$ with $t, r, r/2, \delta'
$ playing the role of $n, r, K ,\delta$. Indeed, its assumptions are met since $1/t \leq \eps \ll 1/r$ by \ref{prop:reg1} and $\delta ' \in [e^{-r/10}, 2/r]$. The lemma application yields a set $S \subseteq V(R)$ with
\[|S| \geq \frac{t}{10^{22} \log^3(1/\delta')} \geq \frac{t}{10^{22}\log^{3}(1/\delta)}\]
such that the following conditions are satisfied. 
\begin{enumerate}[label = \textnormal{(S\arabic*)}]
    \item\label{prop:RinducedonS} $R[S]$ has a spanning subgraph $F$ which is a collection of vertex-disjoint triangles and barbells, and, moreover, $F$ is a subgraph of the union of monochromatic components $T_1, \dots, T_p$ of $R[S]$, where $p \leq 6r \log r + 61tc_{r+1}(\delta'^{1/2})$.
    \item\label{prop:RminusS} $R - S$ has a perfect matching $M$ which is a subgraph of the union of monochromatic components $T'_1, \dots, T'_q$ of $R$, where $q \leq 11r \log r$. 
\end{enumerate}
Note that 
\begin{equation}\label{eq:sizeofp}
    p \leq 6r\log r + 61tc_{r+1}(\delta'^{1/2}) \leq 6r \log r + 61tc_{r+1}(\delta^{1/20}) \leq 10^6 r \log r \left\lceil \frac{r}{\log(1/\delta)} \right\rceil,
\end{equation}
where in the last inequality we used \cref{lem:generalUB} to bound $tc_{r+1}(\delta^{1/20})$ from above.

Let $m := |V_1| = \dots =|V_t|$, so that $m \geq (1 - \eps)n/t$ by \ref{prop:reg2}. For each edge $V_i V_j \in E(F \cup M)$, supposing that its colour is $c \in [r]$, let us remove from $V_i$ all vertices $v$ such that $\deg_{G'_c}(v, V_j) \leq dm/2 $ and similarly remove from $V_j$ all vertices $v$ such that $\deg_{G'_c}(v, V_i) \leq dm/2 $. By \cref{lem:basicfacts}, this removes at most $\eps m$ vertices from each of $V_i$ and $V_j$ since $G'_c[V_i, V_j]$ forms an $\eps$-regular pair of density at least $d$. Observe that $\Delta(F \cup M) \leq 3$ and so, letting $V'_i \subseteq V_i$ be the subset obtained after carrying this out for each edge of $F \cup M$, we have $|V'_i| \geq (1 - 3\eps)m$. So, for each $V_i V_j \in E(F \cup M)$ of colour $c \in [r]$, 
\begin{equation}\label{eq:delta}
    \delta(G'_c[V_i', V_j']) \geq \frac{dm}{2} - 3\eps m \geq \frac{dm}{4}. 
\end{equation}

Let $I \subseteq [t]$ satisfy $\{V_i: i \in I\} = S$ and let $W := \bigcup_{i \in I} V'_i$, so that
\begin{equation}\label{eq:sizeofW}|W| \geq |S| \cdot (1- 3\eps)m  \geq \frac{(1- 3\eps)tm}{10^{22}\log^3(1/\delta)} \geq \frac{n}{10^{23}\log^3(1/\delta)}. \end{equation}
Hence, each $v \in V(G)$ satisfies
\[\deg_G(v, W) \geq |W| - \delta n \geq (1 - 10^{23}\delta \log^3(1/\delta))|W| \geq (1- \delta^{1/2})|W|,\]
where in the last inequality we used $\delta \leq 1/r \ll 1$. Now we select uniformly at random a subset $U \subseteq W$ of size exactly $\lceil \eps^{1/2} |W| \rceil$. By Chernoff's inequality (\cref{lem:chernoff}), with probability at least $1 - ne^{-\sqrt{n}}$, we have
\begin{equation}\label{eq:degintoU}
    \deg_G(v, U) \geq \left(\frac{\deg_G(v, W)}{|W|} - \eps \right)|U| \geq (1- 2\delta^{1/2}) |U|
\end{equation}
for each $v \in V(G)$. Also, with probability at least $1 - |I|e^{-\sqrt{n}}$, we have
\begin{equation}\label{eq:intersecVi}
    |U \cap V_i'| \leq 2\frac{|V_i'||U|}{|W|} \leq 4\eps^{1/2} m
\end{equation}
for each $i \in I$. Both the previous inequalities hold for all relevant choices with positive probability, and so let us fix a choice of $U$ satisfying all of these. 

Now we construct a family of short monochromatic cycles which, in a suitable sense, covers all the edges of $F \cup M$. For each $i \in [p]$ and $j \in [q]$, let $c_i \in [r]$ be the colour of the monochromatic component $T_i$ and let $c'_i \in [r]$ be the colour of $T'_i$.  

\begin{claim}\label{clm:smallcycles}
    Let $V' := \bigcup_{i \in [t]} V'_i$. Then $G'[V' \setminus U]$ contains vertex-disjoint monochromatic cycles $C_1, \dots, C_p$ and $C'_1, \dots, C'_q$ such that, for each $i \in [p]$ and $j \in [q]$,
    \begin{enumerate}[label = \textnormal{(C\arabic*)}]
        \item\label{prop:c1} $C_i$ is $c_i$-coloured and $C'_j$ is $c'_j$-coloured,
        \item\label{prop:c2} $|V(C_i)| , |V(C'_j)| \leq t^4$,
        \item\label{prop:c3} for each edge $V_a V_b \in E(T_i \cap F)$, $C_i$ contains an edge of $G'[V'_a, V'_b]$, and
        \item\label{prop:c4} similarly, for each $V_a V_b \in E(T'_j\cap M)$, $C'_j$ contains an edge of $G'[V'_a, V'_b]$.
    \end{enumerate}
\end{claim}

\begin{proof}[Proof of claim.]
    The argument for constructing these cycles is the same for the $C_i$ and $C'_j$ (with essentially the same inequalities), so let us just show how to construct the first. Suppose that we have already constructed $C_1, \dots, C_{\ell-1}$ satisfying the desired properties for some $\ell \leq p$. Now we construct $C_\ell$ using the $c_\ell$-coloured edges of $G'[V' \setminus U]$. 

    Let $X := U \cup  \bigcup_{i \in [\ell-1]} V(C_i)$, so that, for each $i \in [t]$, \[|X \cap V'_i| \leq (\ell-1)t^4  + |U \cap V'_i| \leq pt^4 + |U \cap V'_i| \leq 5\eps^{1/2} n,\]
    where we used \eqref{eq:sizeofp} and \eqref{eq:intersecVi} together with the fact that $1/n \ll 1/C, \eps, 1/r$ and $t \leq C$ by \ref{prop:reg1}. These are the vertices we wish to avoid when constructing $C_\ell$. 

    Define $H := T_\ell \cap F$. Consider an enumeration $e_1, \dots, e_k$ of the edges of $H$ together with an orientation $V_{a_i} V_{b_i}$ of each edge $e_i$ of $H$, with the property that, for each $i \in [k]$, 
    \[a_i \neq b_{i+1},\]
    where indices are modulo $k$. To see that the combination of such an orientation and enumeration exists, note that $F$ is a union of vertex-disjoint barbells and triangles, each of which admits an Eulerian trail. Taken together, these trails yield an orientation of $F$ (and so one of $H$ as well) together with an ordering of the edges of each barbell and triangle. We thus concatenate all these orderings to obtain an enumeration of $E(F)$, and consider the subordering induced on $E(H)$. Now the above property is satisfied with respect to the given enumeration and orientation.
    
    Note that $\delta(G'_{c_\ell}[V'_{a_i}, V'_{b_i}]) \geq dm/4$ by \eqref{eq:delta}. So, using the fact that $k \leq {\binom{t}{2}}$, we can easily pick a $c_\ell$-coloured matching $x_1 y_1, \dots, x_k y_k$ in $G'[V' \setminus X]$ such that $x_iy_i$ is contained in $V'_{a_i} \times V'_{b_i}$. 

    Now it remains to weave together the edges of this matching into a cycle. To this end, we simply apply \cref{lem:reg_connecting_lemma} to connect each pair $y_i x_{i+1}$ (where indices are modulo $k$), simultaneously ensuring that the connecting paths avoid each other. For the application, note that $\deg_{c_\ell}(y_i, V'_{a_i}), \deg_{c_\ell}(x_{i+1}, V'_{b_{i+1}}) \geq dm/4$ and that $T_\ell$ contains a $c_\ell$-coloured $a_i$--$b_{i+1}$ path $V_{d_1}, \dots, V_{d_{\ell'}}$ on at least $2$ clusters (since $a_i \neq b_{i+1}$ by our choice of enumeration and orientation above) and at most $t$. So applying \cref{lem:reg_connecting_lemma} to this path allows us to construct a $c_{\ell}$-coloured $y_i$--$x_{i+1}$ path in $G'$ on at most $t+2$ vertices. Furthermore, the lemma allows us to do this while avoiding: each set $(V_{d_i} \setminus V'_{d_i}) \cup X$, which has size at most $6\eps^{1/2}m $; all previously constructed $y_{i'}$--$ x_{i'+1}$ paths, each of which also only contains at most $t+2$ vertices; and all edges $x_{i'} y_{i'}$, which contribute no more than $2k \leq 2t^2$ vertices. At the end of this step, we obtain a $c_\ell$-coloured cycle $C_\ell$ on at most $(t+2)k \leq t^4$ vertices, as desired. 
    
    The same argument goes through for constructing each $C'_j$, making sure in this step to avoid $U$, all the $C_i$, and all previously constructed $C'_{j'}$. In this case, we take $H := T'_{j} \cap M$, which is a matching and trivially satisfies $a_i \neq b_{i+1}$ for each $i\in [k]$ under any enumeration and orientation of its edges. \end{proof}

    Recall that $I$ is the set of clusters spanned by $F$. Now, for each $i \in [t] \setminus I$, we let $V''_i \subseteq V_i'$ be a set of size exactly $\lceil (1 - 5\eps)m \rceil$ of vertices not contained in any of the cycles guaranteed by \cref{clm:smallcycles}. This is indeed possible since $|V_i'| \geq (1 - 3\eps)m$ and the cycles from the claim altogether only cover at most $(p+q)t^4 \leq \eps m$ extra vertices of $V_i'$. Let us further write $V^*_i$ for the set of vertices of $V_i$ that are not contained in $V_i''$ or in any of the cycles, i.e. \[V^*_i = V_i \setminus \left( V_i''  \cup \bigcup_{j \in [p]} V(C_j) \cup \bigcup_{j \in [q]} V(C'_j) \right),\]
    and note that this set has size at most $5\eps m$. Now we form a set of global leftovers $L$ defined as
    \[L := V_0 \cup \bigcup_{i \in I} V_i \setminus V_i' \cup \bigcup_{i \in [t] \setminus I} V_i^*, \]
    which has size at most $\eps n + 3\eps m t + 5\eps m t \leq 9\eps n$. Observe that $L$ is disjoint from $U$ since $U \subseteq \bigcup_{i \in I} V_i'$. Moreover, 
    \[|L| \leq 9\eps n \leq \eps^{2/3}|W| \leq \eps^{1/6}|U|,\]
    where we used \eqref{eq:sizeofW}, $|U| \geq \eps^{1/2}|W|$, and $\eps \ll 1/r$. We are now in a position to apply our absorption lemma (\cref{lem:absorption}) to $G[L, U]$ with $|U|,r, K/10, 2\delta^{1/2}$ playing the roles of $n, r, K, \delta$. Our previous inequality ensures that $L$ is small enough relative to $U$, whereas \eqref{eq:degintoU} verifies the minimum degree requirement. The lemma yields a collection of vertex-disjoint monochromatic cycles $C^{\dagger}_1, \dots, C^{\dagger}_s$ in $G[L, U]$ covering $L$, where
    \begin{equation}\label{eq:sizeofs}s \leq \frac{Kr}{10}\left\lceil \frac{r \log r}{\log(1/(2\delta)^{1/2})} \right\rceil \leq \frac{Kr^2 \log r}{2\log(1/\delta)}.\end{equation}

    Let $Q$ be the set of vertices contained in the union of all the monochromatic cycles we have constructed so far, namely $\{C_i\}_{i \in [p]}, \{C'_i\}_{i \in [q]}$ and $\{C^\dagger_i\}_{i\in [s]}$, which are all vertex-disjoint. Summarising, we have
    \begin{enumerate}[label = \textnormal{(Q\arabic*)}]
        \item\label{prop:Q1} for each $i \in I$, we have $V_i \setminus Q \subseteq V_{i}'$ and $|V_i \cap Q| \leq 5\eps^{1/2}m$ (see below),
        \item\label{prop:Q2} for each $i \in [t] \setminus I$, $V_i \setminus Q = V_i''$ and $|V_i''| = \lceil (1-5\eps)m \rceil$, and
        \item\label{prop:Q3} $V_0 \subseteq Q$.
    \end{enumerate}
    Here, \ref{prop:Q1} follows from the fact that $V_i \setminus V_i' \subseteq L \subseteq Q$ for each $i \in I$, and \[|V_i \cap Q| \leq |V_i \setminus V_i' | + |V_{i}' \cap U| + \big| \bigcup_{i \in [p]} C_i \big| + \big| \bigcup_{i \in [q]} C_i' \big| \leq (3\eps + 4\eps^{1/2})m + t^5 \leq 4\eps^{1/2}m\] by \eqref{eq:intersecVi} and \ref{prop:c2}. Similarly, \ref{prop:Q2} follows from the fact that $V_i^* \subseteq L \subseteq Q$ and $V_i \setminus (V_i'' \cup V_i^*) \subseteq  \bigcup_{j \in [p]} V(C_j) \cup \bigcup_{j \in [q]} V(C'_j)$ for each $i \in [t] \setminus I$, and \ref{prop:Q3} from the fact that $V_0 \subseteq L \subseteq Q$.

    In the remainder of the proof, our aim will be to edit the cycles constructed so far (in particular, the $C_i$ and $C_i'$) to incorporate all the vertices of $V \setminus Q$, thereby obtaining a monochromatic cycle partition of $V$. All the sets $V_i \setminus Q$ with $i \in [t] \setminus I$ are easy to cover thanks to \ref{prop:Q2} by applying \cref{lem:spanningpath} in combination with the fact that the clusters they are contained in are spanned by the matching $M$ in $\bigcup_{i \in [q]} T'_i$. For the sets $V_i \setminus Q$ with $i \in I$, which, unlike the others, are not guaranteed to be all of the same size, we proceed differently. We use the fact that their clusters are spanned by the graph $F$, which is a collection of triangles and barbells, to find a perfect $b$-matching of $F$ using \cref{fact:trianglerobust} and \cref{fact:barbellrobust}, where $b(i) := |V_i \setminus Q|$ for each $i \in [t]$. This in turn yields an allocation of the vertices of each $V_i \setminus Q$ to the edges of $F$ which makes it easy to complete the cycle partition again using \cref{lem:spanningpath}. 

    For our applications of \cref{fact:trianglerobust} and \cref{fact:barbellrobust} to be successful, we further need all the sets $|V_i \setminus Q|$ for $i\in [t]$ to be even-sized. This is guaranteed, up to a small number of exceptions, by the following claim. 

    \begin{claim}\label{claim:even}
        The monochromatic cycles $\{C_i\}_{i \in [p]}$ can be extended (i.e. by replacing edges with longer paths), only using vertices of $\bigcup_{i \in I} V_i \setminus Q$ and preserving vertex-disjointness relative to each other, so that the following holds. After modifying the $C_i$ and updating $Q$ consequently, properties \ref{prop:c1}, \ref{prop:c3}, \ref{prop:Q2}, \ref{prop:Q3} are preserved, properties \ref{prop:c2} and \ref{prop:Q1} are preserved up to a factor of $2$ in the RHS, and
        \begin{enumerate}[label = \textnormal{(Q4)}]
            \item\label{prop:Q4} $|V_i \setminus Q|$ is even for all but at most $p$ indices $i \in I$.
        \end{enumerate}
    \end{claim}

    \begin{proof}
        We describe a procedure which, in a given step, extends some cycle $C_{i_0}$ and decreases the number of odd-sized sets $V_i \setminus Q$ by $2$, until there are at most $p$ of these. In such a step, $C_{i_0}$ gains at most $2t+2$ vertices, so in total no more than  $(2t+2) |I| \leq t^4$ vertices are added to $Q$, which means that at the end \ref{prop:c2} and \ref{prop:Q1} remain true up to a factor of $2$.

        Let us describe a single step. Suppose that there are at least $p + 1$ indices $i \in I$ such that $V_i \setminus Q$ is odd. Note that each of these clusters is incident with an edge in $F$, and that this edge is contained in a monochromatic component in $\{T_i\}_{i \in [p]}$. By pigeonhole, there is some monochromatic component $T_{i_0}$ in colour $c_{i_0}$ with two distinct clusters $V_a$ and $V_b$ such that $V_a, V_b \in V(T_{i_0})$ and $V_a \setminus Q, V_b \setminus Q$ are both odd.

        For each $i \in I $, we have
    \[|V_i \setminus Q| \geq (1- 5\eps^{1/2})m - t^4 \geq (1- 6\eps^{1/2})m,\]
    where we used \ref{prop:Q1} and the fact that in previous steps $Q$ gained at most $t^4$ vertices. Thus, for each $c_{i_0}$-coloured edge $V_i V_j \in E(F)$ and $v \in V_i \setminus Q$, we have
    \begin{equation}\label{eq:last_eq}
        \deg_{c_{i_0}}(v, V_j \setminus Q) \geq \frac{dm}{8},
    \end{equation} where we used \eqref{eq:delta} and the fact that $V_i \setminus Q \subseteq V_i'$.

    First suppose that $V(T_{i_0}) = \{V_a, V_b\}$. Then $V_a V_b \in E(F)$, which in turn implies by \ref{prop:c3} that $C_{i_0}$ is a $c_{i_0}$-coloured cycle containing an edge $x_a x_b \in V_a \times V_b$. In this case, we apply \cref{lem:reg_connecting_lemma} to find a path $x_a x_b' x_a' x_b \in V_a \times V_b \times V_a \times V_b$ avoiding $Q$ (also using \eqref{eq:last_eq}). We replace the edge $x_ax_b$ with this new path and update $Q$ consequently. This decrease the number of odd-sized $V_i \setminus Q$ by $2$ (i.e. $|V_a \setminus Q|, |V_b \setminus Q|$ are now even) while only using $2$ extra vertices, thereby completing this step. Note that \ref{prop:c1}, \ref{prop:c3}, \ref{prop:Q2} and \ref{prop:Q3} are preserved.

    Now suppose that $V(T_{i_0}) \neq \{V_a, V_b\}$ instead. This implies that there exists $V_c \neq V_a, V_b$ connected to one of $V_a$ or $V_b$ by a $c_{i_0}$-coloured edge. Let us assume $V_a V_c \in E(T_{i_0})$, as the argument is identical in either case. By \ref{prop:c3}, $C_{i_0}$ contains an edge $x_a x_c \in V_a \times V_c$. Since $T_{i_0}$ is a monochromatic component of $R[S]$ by \ref{prop:RinducedonS}, there are indices $d_1, \dots, d_k \in I$ such that $V_a V_{d_1} \dots V_{d_k} V_b$ is a $c_{i_0}$-coloured path in $R[S]$. 

   For each $i \in \{a,b,c,d_1, \dots, d_k\}$, let us partition $V_i$ into two sets $V_{i}^1$ and $V_{i}^2$ of the same size (if $m$ is odd, we first discard an arbitrary vertex and then partition). Consider the sequence of sets 
   \[V^1_{d_1}, \dots , V^1_{d_k} , V^1_b , V_{d_k}^2 , \dots , V_{d_1}^2 , V_a^1,\]
   which are all of the same size, and any two consecutive ones form an $\eps^{1/2}$-regular pair of density at least $d/2$ by \cref{lem:basicfacts}. Moreover, by relabelling if necessary, we may assume (using \eqref{eq:last_eq}) that $\deg_{c_{i_0}}(x_a, V_{d_1}^1), \deg_{c_{i_0}}(x_c, V_a^1) \geq dm/20$ (if $k =0$, we relabel so that $\deg_{c_{i_0}}(x_a, V_b^1) \geq dm/20$ instead). By \cref{lem:reg_connecting_lemma}, there exists a $c_{i_0}$-coloured $x_a$--$x_c$ path avoiding $Q$ whose internal vertices lie in $V^1_{d_1} \times \dots \times V^1_{d_k} \times V^1_b \times V_{d_k}^2 \times \dots \times V_{d_1}^2 \times V_a^1$. We replace the edge $x_a x_c$ in $C_{i_0}$ by this path and update $Q$ consequently. As promised, $C_{i_0}$ gained no more than $2t+2$ vertices and the total number of indices $i \in I$ for which $V_i \setminus Q$ is odd decreased by $2$ (i.e. $|V_a \setminus Q|, |V_b \setminus Q|$ are no longer odd). Properties \ref{prop:c1}, \ref{prop:Q2} and \ref{prop:Q3} are clearly preserved, whereas \ref{prop:c3} remains true since $C_{i_0}$ now uses an edge in $ V_a^1 \times \{x_c\}$ in place of $x_ax_c$.
 \end{proof}

    With the claim in hand, we are now in a position to absorb all the vertices of $V \setminus Q$ into the cycles $\{C_i\}_{i \in [p]}$ and $\{C'_i\}_{i \in [q]}$. First, let us handle the sets $\{V_i \setminus Q\}_{i \in [t] \setminus I}$. Recall that the clusters $V_i$ with $i \in [t] \setminus I$ are spanned by the perfect matching $M$. We consider each edge $V_a V_b$ of $M$ in turn and show how to absorb the vertices of $V_a \setminus Q$ and $V_b \setminus Q$. Recall that $V_a V_b$ is contained in some monochromatic component $T'_f$ of colour $c'_f$. By \ref{prop:c4}, $C'_f$ contains an edge $x^+x^- \in V_a \times V_b$ (note that $x^+$ and $x^-$ must be contained in $Q$). Since $V(C'_f), V_a \setminus Q $ and $V_b \setminus Q$ are all subsets of $V'$ by \cref{clm:smallcycles} and \ref{prop:Q2}, we have
    \[\delta(G'_{c'_f}[(V_a \setminus Q) \cup \{x^+\}, (V_b \setminus Q) \cup \{x^-\}]) \geq \frac{dm}{8},\]
    where we used \eqref{eq:delta} and \ref{prop:Q2}. This bipartite graph forms a $\sqrt{\varepsilon}$-regular pair (by \cref{lem:basicfacts}) whose parts have size precisely $\lceil (1 - 5\eps)m \rceil +1$ by \ref{prop:Q2}. Thus, by \cref{lem:spanningpath}, there is a $c'_f$-coloured $x^+$--$x^-$ path whose internal vertices are precisely $(V_a \cup V_b) \setminus Q$. We now remove the edge $x^+x^-$ from the cycle $C'_f$ and replace it with this path. Repeating this for each edge of $M$ shows how to cover all the clusters $V_i$ with $i \in [t] \setminus I$. 

    Now, let us consider the sets $\{V_i \setminus Q\}_{i \in I}$. By \ref{prop:Q4}, we can make all these sets even-sized by setting aside a set $Y$ consisting of at most $p$ vertices (each of these, viewed as a cycle, will be included in our final partition). For each $i \in I$, we define
    \[b(i) := |V_i \setminus (Q \cup Y)| \geq (1 - 5\eps^{1/2}m )  - 1 \geq \frac{9m}{10}, \]
where we used \ref{prop:Q1}. Recall that the clusters $V_i$ with $i \in I$ are spanned by $F$, which is a collection of vertex-disjoint barbells and triangles. Since $b(i)$ is even for each $i \in I$, we can apply \cref{fact:trianglerobust} and \cref{fact:barbellrobust} to each of these triangles and barbells to construct a perfect $b$-matching $\omega: E(F) \to \mathbb{Z}^{\geq 0}$ with $\omega(e) \geq m/10$ for each $e \in E(F)$.        

Let us now choose uniformly at random a partition of each $V_i \setminus (Q \cup Y)$ of the form
\[\{V_i^e :  e \in E(F), e \ni V_i\},\]
where $|V_i^e| = \omega(e)$ for each $e \in E(F)$ with $V_i \in e$. The partitions for different $i \in I$ are chosen independently of each other. Each partition is well-defined since
\[\sum_{e\ni V_i} |V_i^e| = \sum_{e \ni V_i} \omega(e) = b(i) = |V_i \setminus (Q \cup Y)|.\]
By Chernoff's bound (\cref{lem:chernoff}), for each $v \in V, c \in [r], e \in E(F)$ and $i \in I$, we have
\begin{equation}\label{eq:degafteralloc}\deg_c(v, V_i^e) \geq \frac{\deg_c(v, V_i \setminus |Q \cup Y|)}{|V_i \setminus (Q \cup Y)|} \cdot |V_i^e| - \eps m \end{equation}
with probability at least $1 - rn t^3 e^{-\sqrt{n}} > 0$. So we may fix a choice of partitions satisfying \eqref{eq:degafteralloc} for each of the relevant choices. 

Now, for each $e = V_a V_b \in E(F)$, we can cover the vertices of $V_a^e$ and $V_b^e$ using essentially the same argument that was used for the matching pairs of clusters in $M$. Recall that $e$ belongs to some component $T_f$ of colour $c_f$, and thus, by \ref{prop:c3}, $C_f$ contains an edge $x^+x^- \in V_a \times V_b$. Since $V(C_f), V_a \setminus(Q \cup Y), V_b \setminus (Q \cup Y)$ are all subsets of $V'$, we get
\[\deg_{G'_{c_f}}(v, V_b \setminus (Q \cup Y)), \deg_{G'_{c_f}}(u, V_a \setminus (Q\cup Y)) \geq \frac{dm}{8} \]
for each $v \in \{x^+\} \cup ( V_a \setminus (Q \cup Y)$ and $u \in \{x^-\} \cup (V_b \setminus (Q \cup Y)$. Together with \eqref{eq:degafteralloc}, this yields
\[\delta(G'_{c_f}[V_a^e \cup \{x^+\}, V_b^e \cup \{x^-\}]) \geq \frac{dm}{100},\]
where we used the fact that $|V_a^e| = |V_b^e| = \omega(e) \geq m/10 $. Since this bipartite graph is $\sqrt{\eps}$-regular by \cref{lem:basicfacts}, \cref{lem:spanningpath} yields a $c_f$-coloured $x^+$--$x^-$ path whose internal vertices are precisely those in $V_a^e \cup V_b^e$. Then, replacing the edge $x^+x^-$ in $C_f$ with this path yields a monochromatic cycle covering the entirety of $V_a^e \cup V_b^e$. After repeating this step for each $e \in E(F)$, the resulting collection of monochromatic cycles covers the entirety of $\bigcup_{i \in I}V_i \setminus Q$, thereby completing the cycle partition. 

Our final cycle partition is \(\{C_i\}_{i \in [p]} \cup \{C_i'\}_{i \in [q]} \cup \{C_i^\dagger\}_{i \in [s]} \cup Y,\) which has size at most
\[2p + q + s \leq  Kr \log r \left\lceil \frac{r}{\log(1/\delta)} \right\rceil ,\]
using \eqref{eq:sizeofp}, \ref{prop:RminusS} and \eqref{eq:sizeofs}.  \end{proof}

\begin{proof}[Proof of \cref{thm:main_theorem}] The upper bound follows directly from \cref{lem:ub_mcpart} and the lower bound from the lower bound in \cref{thm:tree_cover_main} together with $tc_{r}(\delta) \leq cp_r(\delta)$.    
\end{proof}

\section{Concluding Remarks}\label{sec:concluding_remarks}

In this paper, we studied how many monochromatic cycles are required to partition a large $r$-edge-coloured graph as a function of its minimum degree, as captured by the function $cp_r(\delta)$. We determined $cp_r(\delta)$ up to a $(\log r)$-factor for all choices of $r$ and $\delta$. Along the way, we also investigated a related but easier problem where, instead of a cycle partition, we seek a cover of the vertex set by (not necessarily vertex-disjoint) monochromatic trees. For this variant, we obtained slightly stronger bounds. Our work leaves open several intriguing questions.

The most natural and significant open problem is to better understand the behaviour of $cp_r(\delta)$. We propose the following conjecture. 

\begin{conjecture}\label{conj:tight_cycle_part}
    There exists $K > 0$ such that, for all $r \geq2$ and $\delta \in (0,1/2)$,
    \[cp_r(\delta) \leq Kr \cdot \left\lceil \frac{r}{\log(1/\delta)}\right\rceil.\]
\end{conjecture}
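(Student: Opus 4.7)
The aim is to eliminate the $\log r$ prefactor from \cref{thm:main_theorem}, matching the lower bound of \cref{lem:lower_bound_tree} exactly up to a constant. Tracing through the proof of \cref{lem:ub_mcpart}, the $\log r$ loss enters in three essentially independent places: the $6r\log r$ term from \cref{lem:cherries}, the $11r\log r$ term from \cref{clm:greedycover}, and the $\log r$ inside the ceiling that propagates through \cref{lem:absorption} via \cref{lem:bounded_leftover}. Each originates from an iterative greedy argument that shrinks a leftover by a constant factor per round, requiring $\Theta(\log r)$ rounds to terminate. Eliminating all three losses appears to require a structurally different approach at each point.

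I would first try to sharpen the tree cover upper bound in \cref{thm:tree_cover_main} to $tc_r(\delta) \leq Kr \lceil r/\log(1/\delta)\rceil$, matching the lower bound of \cref{lem:lower_bound_tree}. The current proof invokes \cref{lem:UBtreecover} from Bucić-Korándi-Sudakov, which loses a logarithmic factor intrinsic to the standard iterated set-cover method. A tighter bound would need to leverage the $r$-partite structure of the connectivity hypergraph and the $\delta$-intersecting property simultaneously, perhaps through a weighted probabilistic argument on the edge distribution, or through an LP-rounding scheme on the fractional transversal that exploits the fact that every edge must hit at least one vertex from each of the $r$ parts.

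Given such a sharp tree-cover bound, the next step would be to rewrite \cref{lem:reducedgraphcover} so that the reduced graph is covered with only $\ca O(r \lceil r/\log(1/\delta)\rceil)$ monochromatic components, with no additive $\ca O(r\log r)$ overhead. The idea is to abandon both the greedy matching removal in \cref{clm:greedycover} and the greedy cherry construction in \cref{lem:cherries}, extracting instead the matching $M$ and the triangle/barbell family $F$ directly from a minimal transversal of the reduced graph's connectivity hypergraph. The main task here would be to show that any such transversal yields both a near-perfect matching and a small family of barbells absorbing the residual odd-size clusters, via a bipartite double-cover-style argument.

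The main obstacle will be the absorption lemma. The hub cores produced by \cref{lem:linkedfam} must be of size $n/r^{\ca O(r)}$ to guarantee the sublinear-expander connectivity required by \cref{lem:connecting_lemma}, which in turn forces the intermediate leftover in \cref{lem:allbutrOr} to contain up to $r^{\ca O(r)}$ vertices. Covering this leftover greedily via \cref{lem:bounded_leftover} then inevitably costs an extra $\log r$ factor. Removing this loss likely requires replacing the piecewise hub absorption with a single globally-structured absorbing subgraph — in the spirit of universal absorbers used for Hamiltonicity problems — that can swallow an $r^{\ca O(r)}$-sized leftover using only $\ca O(r \lceil r/\log(1/\delta)\rceil)$ cycles in one pass. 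One possible route is to strengthen the linked hub family so that some hub admits a monochromatic path spanning all its reserved routing vertices, providing a single long template into which all remaining leftover vertices can be inserted simultaneously.
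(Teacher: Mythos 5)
The statement you are addressing is \cref{conj:tight_cycle_part}, which the paper states explicitly as an open conjecture in \cref{sec:concluding_remarks}; the paper contains no proof of it, only the weaker bound $cp_r(\delta) \leq Kr\log r\lceil r/\log(1/\delta)\rceil$ of \cref{thm:main_theorem}. Your submission is likewise not a proof: it is a research plan in which every step is conditional on a sub-result you do not establish. Concretely, your first step --- sharpening the tree cover bound to $tc_r(\delta) \leq Kr\lceil r/\log(1/\delta)\rceil$ --- is itself \cref{conj:tree_cover}, which the paper also poses as open; you offer only candidate techniques (``a weighted probabilistic argument,'' ``an LP-rounding scheme'') without executing any of them. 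The subsequent steps (restructuring \cref{lem:reducedgraphcover} to extract $F$ and $M$ from a minimal transversal, and replacing the hub-based absorption with a ``universal absorber'') are similarly described at the level of intent rather than argument, and each contains phrases like ``the main task here would be to show'' and ``likely requires'' that mark unfilled gaps.

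That said, your diagnosis of where the $\log r$ losses arise is accurate and matches the paper's own accounting: the $6r\log r$ term in \cref{lem:cherries}, the $11r\log r$ term in \cref{clm:greedycover}, and the $\log r$ inside the ceiling in \cref{lem:absorption} (via \cref{lem:bounded_leftover}) are indeed the three sources, all coming from greedy halving arguments. You also correctly identify that \cref{conj:tree_cover} is a natural prerequisite, which is exactly the role the paper assigns to it. But identifying the obstacles and naming plausible directions does not constitute a proof of the conjecture, and none of the three key reductions you propose is carried out. The statement remains unproven.
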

If true, \cref{conj:tight_cycle_part} would be tight by \cref{lem:lower_bound_tree}. Moreover, it holds for $\delta = \Omega(1)$ by \cref{thm:monopart}, the main result of \cite{korandilangletzterpokrovksiy}. Notably, \cref{conj:tight_cycle_part} predicts that $\ca O(r)$ cycles already suffice for $n$-vertex edge-coloured graphs with minimum degree at least $(1- 2^{-\Omega(r)})n$. This would constitute a substantial strengthening of the corresponding statement for edge-coloured complete graphs, which remains a well-known open problem.

As a stepping stone towards \cref{conj:tight_cycle_part}, we also propose the analogous conjecture for tree covering. 

\begin{conjecture}\label{conj:tree_cover}
    There exists $K > 0$ such that, for all $r \geq 2$ and $\delta \in (0,1)$, 
    \[tc_r(\delta) \leq Kr \cdot \left\lceil\frac{r}{\log(1/\delta)} \right\rceil.\]
\end{conjecture}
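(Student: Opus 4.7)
The plan is to strengthen the upper bound in \cref{thm:tree_cover_main} by removing the $\log(1 + r/\log(1/\delta))$ factor. By the machinery of \cref{sec:hypergraph_connection}, specifically \cref{fact:intersectingaux} and \cref{proposition:frommultitocolour}, the conjecture is equivalent to the purely combinatorial statement that every $\delta$-intersecting multi-$r$-graph $\ca H$ satisfies $\tau(\ca H) \leq Kr \lceil r/\log(1/\delta)\rceil$. So the first step is to reduce to this transversal problem, and the rest of the argument would take place entirely at the hypergraph level.

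The current derivation (\cref{lem:generalUB}) routes through \cref{lem:UBtreecover}, which only uses the weaker fact that any $k \approx 1/\delta$ edges admit a transversal of size $r$. The proof of \cref{lem:UBtreecover} is iterative and loses a $\log m$ factor coming from a recursive halving of the uncovered edges. My first line of attack would therefore be to prove a \emph{$\delta$-intersecting version} of \cref{lem:UBtreecover}, in which one pass through the hypergraph already reduces the number of uncovered edges exponentially. Concretely, I would try a greedy argument: repeatedly pick a vertex $v$ whose degree in the remaining hypergraph is at least $(\log(1/\delta))/r$ times the number of remaining edges, and remove all edges through $v$. Averaging using the $\delta$-intersecting property gives that the initial hypergraph has such a vertex, because each edge meets $(1-\delta)e(\ca H)$ others and so the average vertex-degree relative to $e(\ca H)$ is at least $(1-\delta)/r$. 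After $O(r^2/\log(1/\delta))$ vertices the edge count drops below one, which is exactly the bound we want.

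The main obstacle is that the $\delta$-intersecting property is manifestly \emph{not} hereditary: after removing a few vertices, the surviving hypergraph need not satisfy any useful intersection condition, so we cannot reapply the averaging step. To get around this, I would track a more robust potential function than ``number of uncovered edges''—for instance, the number of \emph{pairs} of intersecting uncovered edges, or a weighted count with weights tailored to the partition classes—so that the greedy step can be iterated. A plausible alternative is to reformulate the problem fractionally: by LP duality $\tau^*(\ca H) = \nu^*(\ca H)$, and a fractional matching in a $\delta$-intersecting hypergraph is constrained because every edge ``blocks'' a $(1-\delta)$-fraction of the others; combining this with the Füredi-type integrality gap bound for $r$-partite $r$-uniform hypergraphs might yield the desired bound without ever losing a $\log$.

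If both direct approaches fail, the fallback is a stability argument. The lower bound construction $H_{r,t,m}$ of \cref{lem:deltaintersect} is essentially the unique extremal example, and its transversal number matches the conjecture. So I would try to show that any $\delta$-intersecting hypergraph either looks structurally like a blow-up of $H_{r,t,m}$—in which case $\tau(\ca H)$ can be computed by hand via \cref{fact:coverHrtm}—or else admits a high-degree vertex far above the average and is therefore amenable to the greedy attack. Stability is notoriously delicate in this regime because the parameters $t$ and $m$ interact in the extremal example, so the real work would be in pinning down the right notion of ``close to extremal'' that permits a clean dichotomy.
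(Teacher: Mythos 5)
This statement is \cref{conj:tree_cover}, which the paper poses as an \emph{open conjecture}; the paper does not prove it, and only establishes the weaker bound of \cref{thm:tree_cover_main} carrying the extra factor $\log(1 + r/\log(1/\delta))$. What you have written is a research programme rather than a proof: each of your three routes (a strengthened greedy, an LP-duality/integrality-gap argument, a stability dichotomy) is left unexecuted, and you yourself flag the central obstruction, namely that the $\delta$-intersecting property is not hereditary under deleting the star of a vertex. Your correct observations are the reduction to transversals of $\delta$-intersecting $r$-partite multi-$r$-graphs via \cref{fact:intersectingaux} and \cref{proposition:frommultitocolour} (which the paper itself points out in \cref{sec:concluding_remarks} as the natural reformulation), and the diagnosis that the $\log$ loss in \cref{lem:generalUB} originates in the recursive halving inside \cref{lem:UBtreecover}.

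Beyond the acknowledged gaps, the arithmetic of your first approach does not close even granting heredity. If each greedy step removes a $p$-fraction of the remaining edges with $p = \log(1/\delta)/r$, then after $s$ steps at most $e(\ca H)(1-p)^s$ edges survive, and forcing this below $1$ requires $s \gtrsim r\log(e(\ca H))/\log(1/\delta)$ --- a quantity that grows with the number of edges of $\ca H$ and is therefore not bounded by $\ca O(r^2/\log(1/\delta))$. This $\log(e(\ca H))$ loss is the classical set-cover greedy penalty, and it is exactly why \cref{lem:UBtreecover} first passes to the finitary property that any $k\approx 1/\delta$ edges have a transversal of size $r$ and then pays a logarithmic factor in the resulting parameter. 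Removing it genuinely requires new structural input about $\delta$-intersecting hypergraphs, which is why the question remains open.
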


This bound would also be tight by \cref{lem:lower_bound_tree}. Furthermore, \cref{thm:tree_cover_main} implies that \cref{conj:tree_cover} holds when $\delta = \Omega(1)$ and, in contrast to \cref{conj:tight_cycle_part}, also when $\delta = 2^{-\Omega(r)}$. The remaining gap thus lies in the intermediate range. Note that, in order to prove \cref{conj:tree_cover}, one may equivalently work with its reformulation in the language of transversals of $\delta$-intersecting multi-$r$-graphs provided by \cref{fact:intersectingaux} and \cref{proposition:frommultitocolour}. 

Pokrovskiy \cite{POKROVSKIY201470} initiated the study of monochromatic cycle partitioning problems in which one allows a leftover set of uncovered vertices whose size is bounded as a function of $r$. In a similar spirit, our methods yield the following result. 

\begin{proposition}\label{prop:bounded_leftover}
    Let $1/n \ll 1/r, 1/K$ and $1/K \ll 1$. Let $\delta \in (0,1/2)$. Every $n$-vertex $r$-edge-coloured graph with $\delta(G) \geq (1-\delta)n$ admits a collection of at most
    \[K(r \log r + tc_{r+1}(\delta^{1/4}))\]
    monochromatic cycles covering all but $r^{Kr}$ vertices of $G$.
\end{proposition}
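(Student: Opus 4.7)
The proof closely mirrors that of Lemma~\ref{lem:ub_mcpart}, with the key modification that the final absorption step uses the cheaper Lemma~\ref{lem:allbutrOr} (covering all but $r^{\ca O(r)}$ vertices of a bipartite leftover) in place of the stronger Lemma~\ref{lem:absorption}. This saves a factor of roughly $\log r$ in the cycle count, at the cost of the $r^{Kr}$ uncovered vertices allowed by the proposition. As before, we may assume $\delta \in [e^{-r}, 1/24]$: the ranges $\delta \geq 1/24$ and $r$ small follow from Theorem~\ref{thm:monopart}, while decreasing $\delta$ below $e^{-r}$ does not weaken the conclusion.

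The plan is as follows. First, apply the multicolour regularity lemma to $G$ to obtain clusters $V_1,\dots,V_t$ and a cleaned reduced graph $R$ with $\delta(R) \geq (1-2\delta)t$. Then apply Lemma~\ref{lem:reducedgraphcover} to $R$ (with $\delta' = \max\{2\delta, e^{-r/10}\}$) to get a subset $S \subseteq V(R)$ carrying a spanning collection $F$ of vertex-disjoint triangles and barbells lying in at most $p \leq 6r\log r + 61\,tc_{r+1}(\delta'^{1/2})$ monochromatic components, together with a perfect matching $M$ of $R-S$ lying in at most $q \leq 11r\log r$ further components. As in Lemma~\ref{lem:ub_mcpart}, trim each cluster to $V_i'$ so that every edge of $F\cup M$ induces a regular pair of min degree $\geq dm/4$ in the appropriate colour, and select a random reservoir $U \subseteq W := \bigcup_{i \in I} V_i'$ of size $\eps^{1/2}|W|$ satisfying $\deg_G(v,U) \geq (1-2\delta^{1/2})|U|$ for all $v \in V(G)$.

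Next, build vertex-disjoint short monochromatic cycles $C_1,\dots,C_p, C_1',\dots,C_q'$ in $G' - U$ exactly as in Claim~\ref{clm:smallcycles}, one per component of $F$ or $M$, each using (in the appropriate colour) one representative regular pair per edge of that component. Then extend these cycles to absorb all remaining vertices in the clusters, using the blow-up machinery of Lemma~\ref{lem:ub_mcpart}: for each edge of $M$ apply Lemma~\ref{lem:spanningpath} directly to the two endpoint clusters; for each edge of $F$, use Claim~\ref{claim:even} to equalize the parities of the uncovered cluster sizes, apply Facts~\ref{fact:trianglerobust} and~\ref{fact:barbellrobust} to obtain a perfect $b$-matching of each triangle/barbell, and then apply Lemma~\ref{lem:spanningpath} along each assigned pair. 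After this extension, the only uncovered vertices form a leftover set $L$ consisting of $V_0$, the discarded parts of the clusters, and $U$, with $|L| \leq 9\eps n$.

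Finally, apply Lemma~\ref{lem:allbutrOr} to the bipartite graph $G[L,U]$, with $2\delta^{1/2}$ in the role of $\delta$; the hypotheses $|L| \leq |U|/r^{40r}$ and $\deg(v,U) \geq (1-2\delta^{1/2})|U|$ follow from taking $\eps$ sufficiently small depending on $r$, and from our choice of $U$. This yields a collection of at most $tc_{r+1}(24\delta^{1/2}) \leq tc_{r+1}(\delta^{1/4})$ additional vertex-disjoint monochromatic cycles covering all but at most $r^{K'r}$ vertices of $L$ (using the monotonicity of $tc_{r+1}$ in $\delta$, valid once $\delta$ is small enough; otherwise the bound is absorbed into the constant). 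Summing, the total number of cycles is at most
\[
p + q + tc_{r+1}(\delta^{1/4}) \;\leq\; K\bigl(r \log r + tc_{r+1}(\delta^{1/4})\bigr),
\]
and the uncovered set has size at most $r^{Kr}$. The main obstacle is not conceptual but bookkeeping: one must verify that the parameter hierarchy chosen for the main theorem still satisfies the (weaker) hypotheses of Lemma~\ref{lem:allbutrOr}, and that the numerical comparison between $tc_{r+1}(24\delta^{1/2})$ and $tc_{r+1}(\delta^{1/4})$ holds uniformly in the relevant range of $\delta$—straightforward using $tc_{r+1}$'s monotonicity together with Lemma~\ref{lem:simpleUB} in the edge cases where $\delta$ is near $1/24$.
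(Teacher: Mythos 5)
Your proposal is exactly the paper's proof: the paper disposes of this proposition in a single sentence, stating that it follows from the proof of \cref{lem:ub_mcpart} by replacing the application of \cref{lem:absorption} with \cref{lem:allbutrOr} in the absorption step, and your write-up is a faithful expansion of that substitution (including the WLOG reductions on $\delta$ and the bookkeeping comparing $tc_{r+1}(24\delta^{1/2})$ with $tc_{r+1}(\delta^{1/4})$).

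One caution about the order of operations in your sketch. You place the absorption step (applying \cref{lem:allbutrOr} to $G[L,U]$) \emph{after} the blow-up extension that mops up the clusters, and you simultaneously list $U$ among the vertices still uncovered at that point. Taken literally this breaks: if the extension has already consumed $U \subseteq \bigcup_{i \in I} V_i'$, then the reservoir is gone and the cycles produced inside $G[L,U]$ would collide with cycles already built; if it has not, then the portion of $U$ untouched by the absorbing cycles --- roughly $\eps^{1/2}|W| = \Theta(\eps^{1/2} n)$ vertices, far more than $r^{Kr}$ --- is never covered. (Relatedly, $U$ cannot both be part of $L$ and serve as the $B$-side of $G[L,U]$ with $|L| \le |U|/r^{40r}$.) The proof of \cref{lem:ub_mcpart} orders these steps the other way for exactly this reason: the absorption into $U$ comes first, the set $Q$ is then updated to include the absorbing cycles, and the final blow-up over $F$ via the perfect $b$-matching sweeps up $\bigcup_{i \in I} V_i \setminus Q$, which in particular contains the unused remainder of $U$. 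With that ordering restored, everything else in your proposal goes through as stated.
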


The proposition follows directly from the proof of \cref{lem:ub_mcpart} if, in the absorption step, one replaces the application of \cref{lem:absorption} with \cref{lem:allbutrOr}. 

Interestingly, if \cref{conj:tree_cover} holds, then \cref{prop:bounded_leftover} is tight up to a constant factor (in the number of cycles used) for all values of $r$ and $\delta$ satisfying $tc_{r}(\delta) = \Omega(r \log r)$ (namely, for $\delta = 2^{-\ca O(r/ \log(r))}$). Indeed, \cref{conj:tree_cover} implies $tc_{r+1}(\delta^{1/4}) = \ca O(tc_r(\delta))$, and hence, for these values of $\delta$, the proposition guarantees that $\ca O(tc_r(\delta))$ cycles suffice to cover all but a bounded number of vertices. On the other hand, the proof of \cref{lem:lower_bound_tree} actually shows that, for some $k >0$, there are arbitrarily large $n$-vertex graphs in which any collection of $k r \lceil r/\log(1/\delta) \rceil$ monochromatic trees leaves not just a single vertex, but $\Omega(n)$ vertices uncovered. Consequently, under these assumptions, \cref{prop:bounded_leftover} is tight.

\paragraph{Acknowledgment.} We would like to thank Luke Collins, Kyriakos Katsamaktsis, and Alex Malekshahian for valuable discussions. We would also like to thank Eoin Long for pointing us to the family of hypergraphs $H_{r,t,m}$ used in the proof of \cref{lem:lower_bound_tree}.

\newcommand{\etalchar}[1]{$^{#1}$}

\end{document}